\newtheorem{theorem}{Theorem}
\newtheorem{lemma}{Lemma}
\newtheorem{corollary}{Corollary}
\newtheorem{proposition}{Proposition}
\newcommand{\ee}{\mathrm{e}}
\newcommand{\dd}{\mathrm{d}}
\newcommand{\ii}{\mathrm{i}}
\newcommand{\V}{\mathcal{V}}
\newcommand{\W}{\mathcal{W}}
\begin{document}

\title{Rigorous Methods for Bohr-Sommerfeld Quantization Rules}
\author{J. Dong\footnote{\texttt{jtdong@umich.edu}.}, P. D. Miller\footnote{\texttt{millerpd@umich.edu}.  P. D. Miller was partially supported by the National Science Foundation through grant DMS-2204896.}, and G. Young\footnote{\texttt{gfyoung@umich.edu}.  G. F. Young was partially supported by the National Science Foundation through grant DMS–2303363.}}
\affil{Department of Mathematics, University of Michigan}
\date{May 4, 2025}

\maketitle

\begin{abstract}
    In this work, we prove Bohr-Sommerfeld quantization rules for the self-adjoint Zakharov-Shabat system and the Schr\"odinger equation in the presence of two simple turning points bounding a classically allowed region. In particular, we use the method of comparison equations for $2\times 2$ traceless first-order systems to provide a unified perspective that yields similar proofs in each setting.  The use of a Weber model system gives results that are uniform in the eigenvalue parameter over the whole range from the bottom of the potential well up to finite values.
\end{abstract}

\section{Introduction}
\subsection{Brief history of the WKBJ method and the connection problem}
In 1857, Stokes \cite{Stokes1847} sought to recast Airy's integral $\operatorname{Ai}(z)$ into a form that would ease calculation of its numerical value for large $z$. He noticed that arbitrary constants which appear in the linear combination of two independent power series solutions seemed to be discontinuous across certain values of $\arg (z)$. This paradox of obtaining a discontinuous asymptotic expression for a continuous function came to be known as the \textit{Stokes Phenomenon}. In a subsequent paper \cite{Stokes1850}, Stokes studied this problem in more detail and found that these illusory continuities were due to exponentially small terms becoming suddenly dominant as $\arg (z)$ is varied. In a famous quote from a later 1902 review paper \cite{Stokes1902}, Stokes describes: ``The inferior term enters as it were into a mist, is hidden for a little from view, and comes out with its coefficient changed.''

Gans' 1915 paper \cite{Gans1915} in which he studies the propagation of light through a slowly varying medium is widely considered to be the first systematic study of the behavior of waves on either side of a single transition point. He linearly approximated the coefficient vanishing at the transition point and constructed asymptotic expressions valid on either side, patching approximate solutions in an overlapping region containing the transition point. However, his result was not easily adaptable to other contexts.

In 1925, while studying Mathieu's equation in the modeling of free oscillations of water in an elliptical lake, H. Jeffreys became interested in the effects of transition points on solutions. Before Gans, Horn in 1899 \cite{Horn1899} had derived asymptotic formul\ae\  for linear, first-order equations with a large parameter in the absence of transition points. H. Jeffreys felt that Horn's formul\ae\  were not easily adaptable to handle a transition point, nor did they apply to second-order equations, so he set out to address these issues in a series of three 1925 papers \cite{Jeffreys1925-I,Jeffreys1925-II,Jeffreys1925-III}.  
The following year, without knowledge of Jeffreys' result, Wentzel \cite{Wentzel26}, Kramers \cite{Kramers26}, and Brillouin \cite{Brillouin26} independently derived similar formul\ae\  for Schr\"{o}dinger's equation. Ultimately, the WKB method is named after them, since it became popular for its application in quantum mechanics. We decide to include the ``J'' as it sometimes appears in the literature to respectfully acknowledge Jeffreys' significant contribution. Also in the quantum problem, transition points have the interpretation of turning points for a classical particle, so we will refer to them as such going forward.

In 1929, Zwaan \cite{Zwaan1929} sought  
to avoid turning points altogether in the connection problem by moving into the complex plane. This procedure is motivated by the assumption that within a region containing a turning point, the coefficient vanishing there can be approximated within some error by an analytic function. However, as Langer points out in his review \cite{Langer34}, it is nearly impossible to estimate the error resulting from such an approximation, and the method fails to capture the character of the solution on the real line near the turning point. 

To remedy this, Langer introduced his own approach \cite{Langer34,Langer35,Langer37}. Considering the one-dimensional Schr\"{o}dinger equation, Langer had the idea to map the equation to an approximation of Bessel's equation of order $\nu = (\mu + 2)^{-1}$ in a neighborhood of a turning point of order $\mu$ by combining a linear transformation of the unknown function with a change of the independent variable. Most notable about Langer's technique is that it produces a representation of the solution in a (real) neighborhood of the turning point, whereas previous studies of the connection problem required meticulous patching of approximations valid only on either side  that blow up at the turning point itself. A further generalization of Langer's method is described in Section~\ref{sec:comparison-equations} below.

\subsection{The WKBJ method}
\label{sec:general-system}
In its original form, the famous WKBJ method is a technique for approximating solutions of the Schr\"odinger equation 
    \begin{equation}
        -\frac{\hbar^2}{2m}\psi''(x)+\V(x)\psi(x)=\lambda\psi(x),
        \label{eq:Schrodinger}
    \end{equation}
in the limit of small Planck constant $\hbar$, assuming that the difference $\V(x)-\lambda$ is of a fixed sign.  The classical method is based on the exponential substitution 
\begin{equation}
    \psi(x)=\exp\left(\int^x u(y)\,\dd y\right)
\end{equation}
from which it follows that $u(x)$ satisfies a Riccati equation, 
\begin{equation}
    u'(x)+u(x)^2 + \frac{2m}{\hbar^2}(\lambda-\V(x))=0.
    \label{eq:Eikonal}
\end{equation}
One then develops $u(x)$ in a series of ascending powers of $\hbar$:  $u=\hbar^{-1}u_0(x)+u_1(x)+\hbar u_2(x)+\cdots$.  The leading coefficient satisfies an algebraic equation $u_0(x)^2+2m(\lambda-\V(x))=0$ sometimes called the \emph{eikonal equation}.  The method is especially clearly explained in the paper of Wentzel \cite{Wentzel26}.

In this article, we are concerned more generally with singularly-perturbed first-order systems of the form
\begin{equation}
    \epsilon\frac{\dd\mathbf{w}}{\dd x}=\mathbf{B}(x;\lambda)\mathbf{w},\quad\mathbf{w}=\mathbf{w}(x)=\mathbf{w}(x;\lambda;\epsilon),\quad x\in\mathbb{R},
    \label{eq:general-system}
\end{equation}
 in which $\mathbf{B}(x;\lambda)$ is a given $2\times 2$ coefficient matrix, $\lambda\in\mathbb{R}$ is a parameter, and $0<\epsilon\ll 1$ is the small parameter introducing the singular perturbation.  The analogue of the WKBJ method for such a system is based on a substitution involving a scalar exponential factor $\ee^{\sigma(x;\lambda)/\epsilon}$:
\begin{equation}
    \mathbf{w}(x)=\ee^{\sigma(x;\lambda)/\epsilon}\mathbf{v}(x)
\end{equation}
leading to the equivalent equation
\begin{equation}
    \epsilon\frac{\dd\mathbf{v}}{\dd x} = (\mathbf{B}(x;\lambda)-\sigma'(x;\lambda)\mathbb{I})\mathbf{v}.
\end{equation}
One then takes $\sigma'(x;\lambda)$ as one of the eigenvalues of $\mathbf{B}(x;\lambda)$ (the characteristic equation for $\sigma'(x;\lambda)$ is the analogue of the eikonal equation in this case) and expands $\mathbf{v}(x)$ in a power series in $\epsilon$ whose leading term is a corresponding eigenvector with a normalization freedom that has to be resolved at the next order.  This is done by enforcing solvability of the linear equation for the next correction with singular matrix $\mathbf{B}(x;\lambda)-\sigma'(x;\lambda)\mathbb{I}$, which yields a linear first-order differential equation that the scalar normalization factor must satisfy as a function of $x$.  The solution of the resulting solvable system then includes an arbitrary multiple of the same eigenvector once again, whose value is determined at the next order in exactly the same way.  Thus, the procedure continues to arbitrary order in $\epsilon$.

\subsection{Example systems}
More concretely, we examine in this paper the following two specific examples of the system \eqref{eq:general-system}.
\begin{itemize}
    \item \emph{The Schr\"odinger equation.}   
    If one takes
    \begin{equation}
        \mathbf{w}(x;\lambda,\epsilon):=\begin{bmatrix}\psi(x;\lambda,\epsilon)\\\epsilon\psi'(x;\lambda,\epsilon)\end{bmatrix},\quad\mathbf{B}(x;\lambda):=\begin{bmatrix}0 & 1\\\V(x)-\lambda & 0\end{bmatrix},\quad \epsilon=\frac{\hbar}{\sqrt{2m}},
        \label{eq:B-Schrodinger}
    \end{equation}
    then the first equation of the system is an identity, while the second yields the stationary Schr\"odinger equation \eqref{eq:Schrodinger} for $\psi$ with potential energy function $\V$ and energy $\lambda$.  To exclude continuous spectrum, 
    one usually assumes that the potential satisfies $\V(x)\to+\infty$ as $x\to\pm\infty$.
  
    \item \emph{The Zakharov-Shabat system.} The self-adjoint Zakharov-Shabat system reads
    \begin{equation}
        \epsilon\frac{\dd \mathbf{v}}{\dd x}=\begin{bmatrix}-\ii\lambda & q_0(x)\\q_0(x)^* & \ii\lambda\end{bmatrix}\mathbf{v},
    \end{equation}
    with potential $q_0(x)=A(x)\ee^{\ii S(x)/\epsilon}$, $A(x)>0$ and $S(x)$ real-valued.
    This problem arises in the solution, by the inverse-scattering method \cite{ZakharovS73}, of the initial-value problem for the defocusing nonlinear Schr\"odinger equation
    \begin{equation}
        \epsilon\frac{\partial q}{\partial t} + \frac{1}{2}\epsilon^2\frac{\partial^2 q}{\partial x^2} - |q|^2q=0,\quad q=q(x,t),\quad q(x,0)=q_0(x).
        \label{eq:nls}
    \end{equation}
    We assume the usual boundary conditions on $q_0(x)$:  that $A(x)\to 1$ and $S(x)\to 0$ as $x\to\pm\infty$.  
    Since $q_0(x)$ depends on the small parameter $\epsilon\ll 1$ via a fast phase factor, to obtain a system of the form \eqref{eq:general-system} one should first remove this phase by a substitution:
    \begin{equation}
        \mathbf{v}(x)=\ee^{\ii S(x)\sigma_3/(2\epsilon)}\mathbf{w}(x) = \begin{bmatrix}\ee^{\ii S(x)/(2\epsilon)} & 0\\0 & \ee^{-\ii S(x)/(2\epsilon)}\end{bmatrix}\mathbf{w}(x).
    \end{equation}
    Thus, one arrives at \eqref{eq:general-system}, in which 
    \begin{equation}
        \mathbf{B}(x;\lambda):=\begin{bmatrix} -\ii\left(\lambda+\frac{1}{2}S'(x)\right) & A(x)\\A(x) & \ii\left(\lambda+\frac{1}{2}S'(x)\right)\end{bmatrix}.
        \label{eq:B-ZS}
    \end{equation}
\end{itemize}
The study of the Zakharov-Shabat system in the small-$\epsilon$ limit is therefore relvant to the analysis of the nonlinear initial-value problem \eqref{eq:nls} in the semiclassical limit \cite{Jin,JinLM99}.

In both of these examples, the matrix $\mathbf{B}(x;\lambda)$ has zero trace.  Although this can always be assumed without loss of generality by replacing $\mathbf{w}(x)$ with an appropriate scalar multiple by a factor of the form $\ee^{\alpha(x;\lambda)/(2\epsilon)}$, where $\alpha(x;\lambda)$ is an antiderivative of $B_{11}(x;\lambda)+B_{22}(x;\lambda)$, it is important to keep in mind that such a substitution can alter the boundary conditions to be imposed on $\mathbf{w}(x)$ as $x\to\pm\infty$.  Also in both systems, the determinant $\det(\mathbf{B}(x;\lambda))$ is real-valued.

\subsection{Turning points} 
The WKBJ method in its original form for the Schr\"odinger equation \eqref{eq:Schrodinger} is well known to fail in neighborhoods of values of $x$ where $\V(x)-\lambda$ vanishes; such points are called \emph{turning points}, since a classical particle with total energy $\lambda$ is necessarily confined to an interval where $\V(x)-\lambda<0$ and typically oscillates between two turning points bounding such a \emph{classically allowed zone}.  The generalization to traceless systems of the form \eqref{eq:general-system} of the condition defining turning points is the condition that the eigenvalues $\sigma'(x;\lambda)$ coalesce, necessarily at $\sigma'=0$, and hence the turning point condition is $\det(\mathbf{B}(x;\lambda))=0$.  For systems where the determinant is real-valued, the analogue of classically allowed zones are intervals of $x$ where $\det(\mathbf{B}(x;\lambda))>0$.

\subsection{Generalizing the WKBJ method as a transformation technique}
\label{sec:comparison-equations}
From one point of view, the WKBJ method is an example of a technique for finding changes of variables to map \eqref{eq:general-system} onto perturbations of certain \emph{model systems} so that the following things are true.
\begin{itemize}
\item The model system retains the form of the system, so in the new variables one again has \eqref{eq:general-system} but with a different coefficient matrix denoted $\widetilde{\mathbf{B}}$.
\item The matrix $\widetilde{\mathbf{B}}$ qualitatively resembles the original coefficient matrix $\mathbf{B}$.  The qualitative resemblance is to be enforced at the level of the determinants of the coefficient matrices.  As will be seen, this is important so that the change of variables is smooth.
\item The model system with coefficient matrix $\widetilde{\mathbf{B}}$ can be solved exactly.
\end{itemize}
The simplest implementation of the technique is the Liouville-Green transformation, which places the traditional WKBJ method on rigorous footing in the absence of turning points.  
Going further, the method introduced by Langer \cite{Langer34,Langer35,Langer37} to obtain expansions of solutions of singularly-perturbed differential equations that are uniformly valid in a full neighborhood of a turning point is a more sophisticated implementation.  The general perspective we follow here was described in the review article of Berry and Mount \cite[Section 4]{BerryM72}, where it is called the \emph{method of comparison equations} and is attributed to Miller and Good \cite{MillerGood1953} and Dingle \cite{Dingle56}. Table I in \cite{Dingle56} is a particularly remarkable scientific contribution. 

Here are several different model systems and their properties.
\subsubsection{Model for $\det(\mathbf{B}(x;\lambda))>0$.}
In intervals of $x$ where $\det(\mathbf{B}(x;\lambda))>0$ we may think that a change of variables might be able to map $\mathbf{B}$ onto a perturbation of a simple matrix with a positive constant determinant, say, $1$.  The simplest model system in this case that is still coupled  has coefficient matrix 
\begin{equation}
    \widetilde{\mathbf{B}}(y):=\begin{bmatrix}0 & 1\\-1 & 0\end{bmatrix}.
\end{equation}
Here we are using $y$ to denote the new independent variable.  The exact solution of the model system $\epsilon\mathbf{u}'(y)=\widetilde{\mathbf{B}}(y)\mathbf{u}(y)$ 
is then simply
\begin{equation}
    \mathbf{u}(y)=C_+\ee^{\ii y/\epsilon}\begin{bmatrix}1 \\ \ii\end{bmatrix}+C_-\ee^{-\ii y/\epsilon}\begin{bmatrix}1 \\ -\ii\end{bmatrix} = A\begin{bmatrix}\cos(y/\epsilon)\\-\sin(y/\epsilon)
    \end{bmatrix}+B\begin{bmatrix} \sin(y/\epsilon)\\ \cos(y/\epsilon)\end{bmatrix},
    \label{eq:model-solution-minus-one}
\end{equation}
where $C_+,C_-$, or alternately $A,B$ are arbitrary constants.  This is an oscillatory solution.

\subsubsection{Model for $\det(\mathbf{B}(x;\lambda))<0$.}
In intervals of $x$ where $\det(\mathbf{B}(x;\lambda))<0$ one might model $\mathbf{B}$ with a simple matrix with negative constant determinant, say, $-1$.  The simplest coupled model system in this case has coefficient matrix
\begin{equation}
    \widetilde{\mathbf{B}}(y):=\begin{bmatrix}0 & 1\\1 & 0\end{bmatrix}.
\end{equation}
The exact solution of the model system $\epsilon\mathbf{u}'(y)=\widetilde{\mathbf{B}}(y)\mathbf{u}(y)$ in this case is
\begin{equation}
\mathbf{u}(y)=C_+\ee^{y/\epsilon}\begin{bmatrix}1\\1\end{bmatrix}+C_-\ee^{-y/\epsilon}\begin{bmatrix}1\\-1\end{bmatrix}=A\begin{bmatrix}\cosh(y/\epsilon)\\\sinh(y/\epsilon)\end{bmatrix} + B\begin{bmatrix}\sinh(y/\epsilon)\\\cosh(y/\epsilon)\end{bmatrix},
\label{eq:model-solution-plus-one}
\end{equation}
for arbitrary constants $C_+,C_-$ or $A,B$.  
Depending on the values of the constants, this is a rapidly exponentially growing or decaying solution.

\subsubsection{Model system for simple turning points}
\label{sec:Airy}
Fix $\lambda\in\mathbb{R}$, and a \emph{simple} turning point $x_0\in\mathbb{R}$, i.e., a simple root $x_0$ of $x\mapsto\det(\mathbf{B}(x;\lambda))$.   
In such a case, it seems that $\det(\mathbf{B}(x;\lambda))$ behaves linearly near its root
at $x=x_0$, so to model this we can consider perhaps the simplest coefficient matrix of a coupled system with a similar property:
\begin{equation}
    \widetilde{\mathbf{B}}(y)=\begin{bmatrix}0 & 1\\y & 0\end{bmatrix}.
\end{equation}
The resulting model system $\epsilon\mathbf{u}'(y)=\widetilde{\mathbf{B}}(y)\mathbf{u}(y)$
can be solved using Airy functions\footnote{Or, as Langer viewed them, Bessel functions of order $\nu=(\mu+2)^{-1}=\frac{1}{3}$ for a turning point of multiplicity $\mu=1$.}, since eliminating $u_2(y;\epsilon)$ via $u_2(y;\epsilon)=\epsilon u_1'(y;\epsilon)$ yields Airy's equation in the form $\epsilon^2 u_1''(y;\epsilon)-yu_1(y;\epsilon)=0$.  Scaling out $\epsilon$ via $y=\epsilon^{2/3}z$ and writing $\Phi(z)=u_1(y;\epsilon)$ gives the Airy equation \emph{Airy equation} \cite[Chapter 9]{DLMF} in its standard form:  
\begin{equation}
    \Phi''(z)-z\Phi(z)=0. 
    \label{eq:AiryODE}
\end{equation} 

A fundamental pair of real-valued solutions of \eqref{eq:AiryODE} is denoted $\Phi(z)=\mathrm{Ai}(z),\mathrm{Bi}(z)$.  
The corresponding general solution of the model system is
\begin{equation}
    \mathbf{u}(y)=A\begin{bmatrix}\mathrm{Ai}(\epsilon^{2/3}y)\\\epsilon^{5/3}\mathrm{Ai}'(\epsilon^{2/3}y)\end{bmatrix}+
    B\begin{bmatrix}\mathrm{Bi}(\epsilon^{2/3}y)\\\epsilon^{5/3}\mathrm{Bi}'(\epsilon^{2/3}y)\end{bmatrix},
\end{equation}
where $A,B$ are arbitrary constants.  This solution has a transitional character, with oscillations for $y<0$ and exponential behavior for $y>0$.

\subsubsection{Model system for a pair of simple turning points}\label{section_Weber_equation}
Again fixing $\lambda\in\mathbb{R}$, suppose now that the system \eqref{eq:general-system} has exactly two simple turning points in some interval of $x$, and that $\det(\mathbf{B}(x;\lambda))$ is positive between the turning points.  Perhaps the simplest coefficient matrix that models this behavior without decoupling the system is
\begin{equation}
    \widetilde{\mathbf{B}}(y;b)=\begin{bmatrix}0 & 1\\b+\frac{1}{4}y^2 & 0\end{bmatrix}
\end{equation}
where $b<0$ is a parameter.  The qualitative property we are aiming to capture is present for any $b<0$, but as will seen below it is essential to include this parameter to allow \eqref{eq:general-system} to be mapped to a perturbation of the model system
\begin{equation}
    \epsilon\frac{\dd\mathbf{u}}{\dd y}=\begin{bmatrix}0 & 1\\ b+\frac{1}{4}y^2 & 0\end{bmatrix}\mathbf{u}
\label{eq:Weber-system}
\end{equation}
by a smooth change of variables.  This system is also solvable in terms of special functions.  Indeed, eliminating $u_2(y;b,\epsilon)=\epsilon u_1(y;b,\epsilon)$ one obtains $\epsilon^2 u_1''(y;b,\epsilon)-(b+y^2/4)u_1(y;b,\epsilon)=0$ which is a form of \emph{Weber's equation} (cf., \cite[Chapter 12]{DLMF}).  Scaling out $\epsilon$ via $y=\epsilon^{1/2}z$ and $b=\epsilon a$, and writing $\Phi(z;a)=u_1(y;b,\epsilon)$ gives the standard form of Weber's equation:  
\begin{equation}
    \Phi''(z)-\left(a+\frac{1}{4}z^2\right)\Phi(z)=0,\quad a<0.
    \label{eq:Weber0}
\end{equation}

Solutions of \eqref{eq:Weber0} are known as \textit{parabolic cylinder functions} and they are all entire functions of $z$ and also of the parameter $a$. A pair of solutions exhibiting exponential dichotomy (sometimes called a \emph{numerically satisfactory pair}) as $z\to+\infty$ is denoted $\Phi(z)=U(a,z),V(a,z)$, which have the asymptotic behavior (for fixed $a\in\mathbb{C}$):
\begin{equation}
\begin{split}
    U(a,z)&=\ee^{-z^{2}/4}z^{-a-1/2}(1+O(z^{-2})),\quad z\to+\infty,\\
    V(a,z)&=\sqrt{\frac{2}{\pi}}\ee^{z^{2}/4}z^{a-1/2}(1+O(z^{-2})),\quad z\to+\infty.
    \end{split}
    \label{eq:UV-asymp}
\end{equation}
These solutions have the (contstant, by Abel's theorem) Wronskian
\begin{equation}
    \W [ U(a,\diamond), V(a,\diamond)](z) = U(a,z)\frac{\partial V}{\partial z}(a,z) - V(a,z)\frac{\partial U}{\partial z}(a,z) = \sqrt{\frac{2}{\pi}}.
    \label{eq:Weber-Wronskian}
\end{equation}
The pair $\Phi(z)=U(a,-z),V(a,-z)$ is a numerically satisfactory fundamental pair of solutions of \eqref{eq:Weber0} as $z\to -\infty$.

\subsubsection{Model systems for more than two simple turning points}
Following the same line of reasoning, it seems that if one needs to consider an interval of $x$ in which the system \eqref{eq:general-system} has three or more turning points, counted with multiplicity, a suitable model system could be
\begin{equation}
    \epsilon\frac{\dd \mathbf{u}}{\dd y}=\begin{bmatrix}0 & 1\\m(y) & 0\end{bmatrix}\mathbf{u}
    \label{eq:higher-order-model}
\end{equation}
where $m(y)$ is a polynomial of degree at least $3$.  
However, the irregular singular point at $y=\infty$ of such a system has a Poincar\'e rank that is too high for contour integral methods to yield a general solution.  Hence while it may be possible to map \eqref{eq:general-system} to a perturbation of \eqref{eq:higher-order-model} by a smooth change of variables, the resulting simplification will not be nearly as useful as in the previously-discussed cases in general.  

\subsection{Results:  Bohr-Sommerfeld quantization rules}
One of the best-known applications of the WKBJ method and turning point theory is to address the question of existence of \emph{bound states}, by which we mean nontrivial solutions $\mathbf{w}\in L^2(\mathbb{R};\mathbb{C}^2)$ of \eqref{eq:general-system}.  Existence of such a solution requires a condition on the parameter $\lambda$, or more precisely a relationship between the parameters $\lambda$ and $\epsilon$.  Of particular interest is the characterization of the admissible values of $\lambda$, the (bound-state) \emph{eigenvalues}, in the situation that $\epsilon\ll 1$.  

For quantum-mechanical problems, a theory of bound states predating Schr\"odinger's wave mechanics was advanced by Niels Bohr and Arnold Sommerfeld; based on Bohr's correspondence principle, in which quantum systems approach classical mechanics as the Planck constant $\hbar$ tends to zero, it was hypothesized that for small $\hbar$, a classical periodic orbit describes the motion provided that its \emph{classical action} $\oint p\,\dd x$ takes one of a discrete set of values of the form $(n+\frac{1}{2})\hbar$, for $n=0,1,2,\dots$.  In other words, when $\hbar$ is small, the classical action is quantized proportional to integral multiples of $\hbar$. This (approximate) condition is called the \emph{Bohr-Sommerfeld quantization rule}.  In one dimension, solving for the momentum $p$ in terms of constant total energy $\lambda$ that is the sum of a kinetic term $p^2/(2m)$ and a potential part $\V(x)$, the classical action integral becomes $\oint\sqrt{2m(\lambda-\V(x))}\,\dd x$, where the integral is taken over a closed orbit in $x$, i.e., a classically-allowed zone traversed twice in opposite directions with opposite signs for the momentum (square root).

The Bohr-Sommerfeld quantization rule can be justified in the setting of Schr\"odinger's wave mechanics, by deriving it as an approximation directly from the Schr\"odinger equation.  The derivation can be found in many textbooks at various levels of mathematical rigor.  In this review article, we intend to apply the method of comparison equations for two turning points bounding a classically-allowed zone to give rigorous mathematical proof of Bohr-Sommerfeld quantization rules for the two special cases of the general system \eqref{eq:general-system} introduced in Section~\ref{sec:general-system}.

We first consider the Schr\"odinger equation with a potential energy function $\V$ having a single ``well'', consisting of 
a unique minimizer that we take to be at $x=0$ with minimum value $\V=0$ (both without loss of generality).  We also allow for $\V$ to be ``energy dependent'':  $\V=\V(x;\lambda)$, although for the classical case $\partial_\lambda\V(x;\lambda)=0$.
\begin{theorem}[Bohr-Sommerfeld quantization for the Schr\"odinger equation]
    \label{thm:Schroedinger-BS}
    Suppose that $\V(\diamond;\lambda)\in C^5(\mathbb{R})$, that $\V(0;\lambda)=\V'(0;\lambda)=0$ and $\V''(0;\lambda)>0$, that $\pm \V'(x;\lambda)>0$ holds for $\pm x>0$, and that for some exponents $p_\pm\ge 0$ and constants $\V_\pm>0$ we have $\V(x;\lambda)=\V_\pm |x|^{p_\pm}(1+o(1))$, $\V'(x;\lambda)=O(|x|^{p_\pm-1})$, and $\V''(x;\lambda)=O(|x|^{p_\pm-2})$ as $x\to\pm\infty$ uniformly for small $\lambda$.  Let $\lambda_\mathrm{max}>0$ and $\delta>0$ be sufficiently small, and assume that for $m=0,1,2$ and $n=0,\dots,5$, $(x;\lambda)\mapsto\partial_\lambda^{m}\partial_x^n\V(x;\lambda)$ is continuous on $(-\delta,\delta)\times [0,\lambda_\mathrm{max}]$.  Then, all eigenvalues $\lambda$ in the interval $0<\lambda\le\lambda_\mathrm{max}$ of the Schr\"odinger system (\eqref{eq:general-system} with \eqref{eq:B-Schrodinger}) satisfy the (perturbed) Bohr-Sommerfeld quantization rule
    \begin{equation}
        \cos\left(\frac{1}{\epsilon}\int_{x_-(\lambda)}^{x_+(\lambda)}\sqrt{\lambda-\V(x;\lambda)}\,\dd x\right)=O(\epsilon),\quad\epsilon\downarrow 0,
    \end{equation}
    where $x_-(\lambda)<0<x_+(\lambda)$ are the two preimages under $x\mapsto \V(x;\lambda)$ of $\lambda$.  The error term is uniform given $\lambda_\mathrm{max}$.
\end{theorem}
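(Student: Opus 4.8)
The plan is to prove Theorem~\ref{thm:Schroedinger-BS} by the method of comparison equations of Section~\ref{sec:comparison-equations}, taking the Weber system \eqref{eq:Weber-system} as the comparison problem precisely because its classically allowed zone is an interval bounded by two simple turning points that coalesce as the parameter $b\uparrow 0$ --- exactly the confluence occurring in the Schr\"odinger well as $\lambda\downarrow 0$. Fix $\lambda\in(0,\lambda_{\max}]$. The hypotheses on $\V$ force $x\mapsto\V(x;\lambda)-\lambda$ to have exactly two simple zeros $x_-(\lambda)<0<x_+(\lambda)$ with $\V-\lambda<0$ in between, so that $\det\mathbf B(x;\lambda)=\lambda-\V(x;\lambda)$ is positive precisely on the classically allowed zone. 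I would first pin down the model parameter by matching the allowed-zone actions,
\begin{equation}
b=b(\lambda):=-\frac{1}{\pi}\int_{x_-(\lambda)}^{x_+(\lambda)}\sqrt{\lambda-\V(x;\lambda)}\,\dd x<0,
\end{equation}
chosen so that $\int_{-2\sqrt{-b}}^{2\sqrt{-b}}\sqrt{-b-\tfrac14 y^2}\,\dd y=-\pi b$ matches this integral, with $b(\lambda)\to 0^-$ as $\lambda\downarrow 0$. Then I would construct the change of independent variable $x\mapsto y=y(x;\lambda)$ as the increasing solution of the first-order ODE
\begin{equation}
\left(\frac{\dd y}{\dd x}\right)^{\!2}\!\left(b+\tfrac14 y^2\right)=\V(x;\lambda)-\lambda,\qquad y\bigl(x_+(\lambda);\lambda\bigr)=2\sqrt{-b},
\end{equation}
whose right-hand side and coefficient $b+\tfrac14 y^2$ change sign together; the choice of $b$ makes the normalization consistent, forcing also $y(x_-(\lambda);\lambda)=-2\sqrt{-b}$. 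As in Langer's treatment, the apparent singularities of $\dd y/\dd x$ at the two turning points are removable, and $\dd y/\dd x>0$ throughout. Gauging the dependent variable by the Liouville factor, $\mathbf w(x)=\mathbf Q(x;\lambda,\epsilon)\mathbf u(y(x;\lambda))$ with $\mathbf Q$ built from $(\dd y/\dd x)^{\mp 1/2}$, turns the Schr\"odinger system into an \emph{exact} perturbation of the Weber model,
\begin{equation}
\epsilon\frac{\dd\mathbf u}{\dd y}=\Bigl(\widetilde{\mathbf B}(y;b)+\epsilon^{2}\mathbf R(y;\lambda,\epsilon)\Bigr)\mathbf u,
\end{equation}
in which only a Schwarzian-type remainder $\mathbf R$ survives --- it is the exact matching of determinants (the point of carrying the parameter $b$) that eliminates any $O(\epsilon)$ discrepancy.

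The analytic core is to establish, \emph{uniformly} in $\lambda\in(0,\lambda_{\max}]$ and in $\epsilon$ small: (i) that $y(\diamond;\lambda)\in C^{4}(\mathbb R)$ with $\dd y/\dd x$ bounded above and below by positive constants on compacta and with controlled growth as $x\to\pm\infty$ (this is where $\V\in C^5$, the joint continuity of $\partial_\lambda^m\partial_x^n\V$ on $(-\delta,\delta)\times[0,\lambda_{\max}]$, and the far-field bounds $\V=\V_\pm|x|^{p_\pm}(1+o(1))$, $\V'=O(|x|^{p_\pm-1})$, $\V''=O(|x|^{p_\pm-2})$ enter); and (ii) consequently that $\|\mathbf R(\diamond;\lambda,\epsilon)\|_{L^1(\mathbb R;\,\dd y)}\le C$ with $C$ independent of $\lambda$ and $\epsilon$. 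Given (ii), a Volterra/variation-of-parameters estimate for the normalized system $\mathbf u'=(\epsilon^{-1}\widetilde{\mathbf B}(y;b)+\epsilon\mathbf R)\mathbf u$ against the exact fundamental solution matrix of \eqref{eq:Weber-system} shows that the solution $\mathbf u$ recessive as $y\to+\infty$ differs from the recessive Weber solution there by a relative error $O(\epsilon)$, and likewise at $y\to-\infty$; since $\mathbf Q$ is bounded with bounded inverse and $y(x)$ is a controlled diffeomorphism, membership $\mathbf w\in L^2(\mathbb R;\mathbb C^2)$ is equivalent to recessiveness of $\mathbf u$ at both ends.

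To extract the rule, recall that the unperturbed Weber system is equivalent to $\epsilon^2 u_1''=(b+\tfrac14 y^2)u_1$, i.e.\ to a rescaled quantum harmonic oscillator $-\epsilon^2 u_1''+\tfrac14 y^2 u_1=-b\,u_1$, whose $L^2(\mathbb R)$ spectrum is exactly $\{-\epsilon(n+\tfrac12):n=0,1,2,\dots\}$; equivalently, in the variables $z=\epsilon^{-1/2}y$, $a=\epsilon^{-1}b$ of Section~\ref{section_Weber_equation}, the connection (Wronskian) coefficient between the solutions recessive at $+\infty$ and at $-\infty$ is a nowhere-vanishing factor times $\cos(\pi a)=\cos(\pi b/\epsilon)$. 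An eigenvalue $\lambda\in(0,\lambda_{\max}]$ of the Schr\"odinger system forces the analogous connection coefficient of the \emph{perturbed} system to vanish; by step (ii) this coefficient equals the Weber one up to relative error $O(\epsilon)$ --- here one works with the $O(1)$ quantities in the $(y,b)$ picture rather than with the fixed-$a$ asymptotics \eqref{eq:UV-asymp}, which degenerate because $a\sim\epsilon^{-1}\to\infty$, and one checks that the nowhere-vanishing prefactor stays bounded away from zero uniformly for $b$ in the relevant bounded range $[b(\lambda_{\max}),0)$. Hence the vanishing condition reads $\cos(\pi b(\lambda)/\epsilon)=O(\epsilon)$, and since $\cos$ is even and $-\pi b(\lambda)=\int_{x_-(\lambda)}^{x_+(\lambda)}\sqrt{\lambda-\V(x;\lambda)}\,\dd x$, this is the asserted rule with an error uniform in $\lambda\in(0,\lambda_{\max}]$.

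The step I expect to be the main obstacle is making (i)--(ii) genuinely uniform as $\lambda\downarrow 0$: there the turning points $x_\pm(\lambda)$ merge at $x=0$ and $b(\lambda)\to 0^-$, so one is in the confluent limit where a naive two--single--turning--point (Airy) construction would break down, and one must verify --- using $\V''(0;\lambda)>0$ and the local rescaling $y\sim\kappa(\lambda)x$ near the origin, with $\kappa(\lambda)^4=2\V''(0;\lambda)$, so that both allowed zones shrink at the same rate --- that $y(x;\lambda)$, its first four derivatives, and the remainder $\mathbf R$ stay bounded, matching the far-field estimates coming from the growth of $\V$. A secondary point, more routine but still requiring care, is tracking $\dd y/\dd x$ and $\mathbf Q$ as $x\to\pm\infty$ so that the $L^2$ condition on $\mathbf w$ transfers exactly to recessiveness of $\mathbf u$ and the $L^1$ bound on $\mathbf R$ holds on all of $\mathbb R$.
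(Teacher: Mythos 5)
Your strategy coincides with the paper's (Langer map to a Weber comparison system with $b$ fixed by action matching, Volterra analysis against the parabolic-cylinder fundamental matrix, vanishing of a connection Wronskian giving $\cos(\pi b/\epsilon)=O(\epsilon)$), but the analytic core as you state it has a genuine gap. Your step (ii) --- a uniform $L^1(\dd y)$ bound on the remainder $\mathbf R$ --- is not enough to give the claimed $O(\epsilon)$ relative error in the connection coefficient. After conjugating the $\epsilon^2 Q$ perturbation by the Weber fundamental matrix $\mathbf U_0$, the Volterra kernel picks up a factor $\epsilon^{-1/2}$ from $\det\mathbf U_0=\epsilon^{1/2}\sqrt{2/\pi}$, and the quadratic parabolic-cylinder products are $O(1)$ in the turning-point region $y=O(\epsilon^{1/2})$; with only $\|Q\|_{L^1}\lesssim 1$ the kernel norm is merely $O(\epsilon^{1/2})$. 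Even with the sharper pointwise bound $|Q(y;\lambda)|\lesssim(1+y^2)^{-1}$, which the paper proves and which you never establish, the direct contraction estimate yields $O(\epsilon\log(\epsilon^{-1}))$, not $O(\epsilon)$. The paper reaches $O(\epsilon)$ only through a structural refinement: in the Wronskian at $y=0$ the first-component errors $v_1^\pm(0)-1=O(\epsilon\log(\epsilon^{-1}))$ are divided out, and the error is controlled by the second components $v_2^\pm(0)$, which are shown by a separate estimate of the forcing term to be $O(\epsilon)$ (in the weighted sense, balanced against $\Gamma(\tfrac12-a)$ via Stirling in the large-$(-a)$ regime). Without this refinement your final error degrades at least by a logarithm. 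A smaller but related point: the diagonal Liouville gauge built from $(\dd y/\dd x)^{\mp1/2}$ alone leaves an $O(\epsilon)$ term; the exact $\epsilon^2$ remainder requires the additional near-identity nilpotent factor $\mathbb I+\epsilon\mathbf H$ (equivalently the lower-triangular entry $\epsilon a'(x)$ in the combined gauge).

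Beyond that, the two uniformity issues you flag at the end are not side conditions but the bulk of the proof, and your sketch leaves them open: (a) uniformity as $\lambda\downarrow 0$, where the turning points merge, requires the explicit representations of the Langer map through $\zeta$, $t$, and the inner variable $\xi$, together with the bounds on $g'$, $g''$, $g'''$ uniform down to $\lambda=0$ (including the identity $K(\xi)\equiv 1$) before the decay of $Q$ can even be stated; and (b) for $\lambda$ bounded away from $0$ one has $a=b/\epsilon\to-\infty$, so the fixed-$a$ asymptotics of $U,V$ are useless and one needs uniform large-$|a|$ (Airy-type) asymptotics with the weight $Y(a,z)$, plus the lower bound $\Gamma(\tfrac12-a)\gtrsim \ee^{a}(-a)^{-a}$ to keep the normalizing prefactor away from zero. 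Your remark about working with ``$O(1)$ quantities in the $(y,b)$ picture'' points in the right direction but does not substitute for these estimates.
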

This implies that each eigenvalue in the indicated range lies within a distance uniformly proportional to $\epsilon^2$ of quantized approximate values $\widetilde{\lambda}_n$ for which 
\begin{equation}
\int_{x_-(\lambda)}^{x_+(\lambda)}\sqrt{\lambda-\V(x;\lambda)}\,\dd x =     \pi \epsilon \left(n+\frac{1}{2}\right), \quad \lambda=\widetilde{\lambda}_n,\quad n=0,1,2,\dots.
\end{equation}
The spacing of the quantized values themselves is proportional to $\epsilon$.  It is possible to adapt our methods to prove in addition that the eigenvalues are in $1$-to-$1$ correspondence with the approximate values $\widetilde{\lambda}_n$, but that is not the focus of this article.

Yafaev \cite{Yafaev11} gives a rigorous proof of Theorem~\ref{thm:Schroedinger-BS} under weaker hypotheses, but restricting also to the situation that the eigenvalues under consideration lie near a fixed value $\lambda>0$, which also bounds the turning points $x_\pm(\lambda)$ away from each other.  Therefore it is sufficient to treat them separately, which is very close to Langer's original approach and yields approximations of eigenfunctions near the turning points in terms of Airy functions as in Section~\ref{sec:Airy}.  See also \cite[Section 7.2.5]{Miller06}.  To analyze eigenvalues $\lambda>0$ proportional to $\epsilon$ (so-called ``low-lying eigenvalues'') the turning points must be allowed to approach each other, and for this situation rigorous analogues of Theorem~\ref{thm:Schroedinger-BS} can be found in the paper of Simon \cite{Simon83}, as well as in the very recent paper of Kristiansen and Szmolyan \cite{KristiansenS25}.  The latter work is closer to our methodology, being based on differential equations rather than perturbation theory.  Moreover, the approach of \cite{KristiansenS25} yields an analogue of Theorem~\ref{thm:Schroedinger-BS} (under different assumptions) that covers the full range of intermediate scales of $\lambda$, but the error term for the eigenvalues not in the low-lying regime is not as sharp as that given in Theorem~\ref{thm:Schroedinger-BS} (or by Yafaev \cite{Yafaev11} for the fixed-$\lambda$ regime).

In the case of the Zakharov-Shabat system (\eqref{eq:general-system} with \eqref{eq:B-ZS}), we have two potentials $A,S'$ instead of one, and it is useful to introduce the linear combinations 
\begin{equation}
    r_\pm(x):=-\frac{1}{2}S'(x)\pm A(x)
\label{eq:rpm}
\end{equation}
which have the interpretation of Riemann invariants for the hyperbolic Madelung system governing the quantities $A(x,t)^2$ and $S_x(x,t)$ in the dispersionless limit of the defocusing nonlinear Schr\"odinger equation.
In the Schr\"odinger case $\det(\mathbf{B}(x;\lambda))$ is linear in $\lambda$, but for the Zakharov-Shabat system $\det(\mathbf{B}(x;\lambda))$ is instead quadratic.  However, as a difference of squares, $-\det (\mathbf{B}(x;\lambda)) = A(x)^2 -(\lambda + \frac{1}{2}S'(x))^2 $ factors explicitly as 
\begin{equation}
-\det (\mathbf{B}(x;\lambda)) =
R_+(x;\lambda)R_-(x;\lambda),\quad R_\pm(x;\lambda) := \pm (\lambda-r_\mp(x)).
\label{eq:ZSdetB}
\end{equation}
As roots of $x\mapsto\det(\mathbf{B}(x;\lambda))$, turning points are generally associated with exactly one of the two factors $R_\pm(x;\lambda)$.   We consider the case that the two turning points are both roots of the same factor, which without loss of generality\footnote{In the Zakharov-Shabat case, the system $\epsilon\mathbf{w}'(x)=\mathbf{B}(x;\lambda)\mathbf{w}(x)$ is invariant under the substitutions $\lambda\mapsto -\lambda$, $S'\mapsto -S'$, and $\mathbf{w}\mapsto \sigma_1\mathbf{w}$, which has the effect of reflecting the diagram in Figure~\ref{fig:RiemannInvariants} through the $\lambda=0$ axis.} we take to be $R_-(x;\lambda)$. 
We further assume that $1>r_+(x)\ge\min r_+(\diamond)>\max r_-(\diamond)\ge r_-(x)>-1$.  This condition is similar to one in \cite{LiaoPlum2023} for the existence of a gap in the eigenvalue spectrum.  Its significance for us is that when $\min r_+(\diamond)<\lambda<1$, $R_+(x;\lambda)\ge \min r_+(\diamond)-\max r_-(\diamond)>0$.
\begin{figure}[h]
    \centering
    \includegraphics[width=0.5\linewidth]{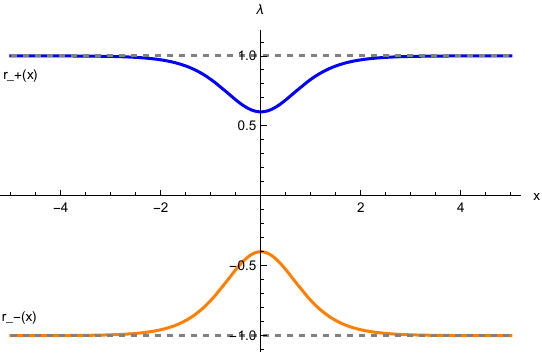}
    \caption{Illustration of $r_\pm(x)$ corresponding to initial data $A(x) = 1 - \frac{1}{2}\operatorname{sech}^2(x)$ and $S(x) = \frac{2}{10}\operatorname{tanh}(x)$. The gray dashed lines correspond to the edges of the continuous spectrum $|\lambda|\ge 1$.}
    \label{fig:RiemannInvariants}
\end{figure}

\begin{theorem}[Bohr-Sommerfeld quantization for the Zakharov-Shabat system]
Suppose that $r_\pm\in C^5(\mathbb{R})$, that $-1\le r_-(x)\le\max r_-(\diamond)<\min r_+(\diamond)\le r_+(x)\le 1$, that $r_\pm(x)\to \pm 1$ and $r_+'(x)=O(|x|^{-1})$, $r_+''(x)=O(|x|^{-2})$ as $|x|\to\infty$, and that $r_+'(0)=0$, $r_+''(0)>0$, and $\pm r_+'(x)>0$ for $\pm x>0$.  Then for each $\lambda_\mathrm{max}\in (\min r_+(\diamond),1)$, all eigenvalues $\lambda$ in the interval $\min r_+(\diamond)<\lambda\le \lambda_\mathrm{max}$ of the selfadjoint Zakharov-Shabat system (\eqref{eq:general-system} with \eqref{eq:B-ZS}) satisfy the (perturbed) Bohr-Sommerfeld quantization rule
\begin{equation}
    \cos\left(\frac{1}{\epsilon}\int_{x_-(\lambda)}^{x_+(\lambda)}\sqrt{(\lambda-r_+(x))(\lambda-r_-(x))}\,\dd x\right)=O(\epsilon),\quad\epsilon\downarrow 0,
\end{equation}
where $x_-(\lambda)<0<x_+(\lambda)$ are the two preimages under $x\mapsto r_+(x)$ of $\lambda$.  The error term is uniform given $\lambda_\mathrm{max}$.
    \label{thm:ZS-BS}
\end{theorem}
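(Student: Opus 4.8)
The plan is to prove Theorem~\ref{thm:ZS-BS} by the method of comparison equations of Section~\ref{sec:comparison-equations}, using the Weber model system \eqref{eq:Weber-system}, so that the argument runs in close parallel to (and after) the proof of Theorem~\ref{thm:Schroedinger-BS}. First I would record the structural facts that make the two problems the same: under the hypotheses, $x\mapsto r_+(x)-\lambda$ has exactly two simple zeros $x_-(\lambda)<0<x_+(\lambda)$, negative between them and positive outside, while the companion factor $R_+(x;\lambda)=\lambda-r_-(x)$ is bounded above and below by positive constants, uniformly in $x\in\mathbb{R}$ and $\lambda\in(\min r_+(\diamond),\lambda_\mathrm{max}]$. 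Hence $-\det(\mathbf{B}(x;\lambda))=R_+(x;\lambda)R_-(x;\lambda)=(\lambda-r_+(x))(\lambda-r_-(x))$ realizes exactly the ``two simple turning points bounding a classically allowed zone'' configuration, with the well at $x=0$ governed by $r_+$; the hypotheses on $r_+$ at $0$ and at $\pm\infty$ are tailored so that $(\lambda-r_+(x))(\lambda-r_-(x))$ plays the role of $\lambda-\V(x;\lambda)$ in Theorem~\ref{thm:Schroedinger-BS}. I would also pass, by a constant ($\epsilon$-independent) gauge, from \eqref{eq:B-ZS} to an equivalent system with real, traceless coefficient matrix of the same determinant, absorbing the imaginary part of $\mathbf{B}$ into an $O(\epsilon)$ remainder (equivalently, eliminating one component yields a Schr\"odinger-type scalar equation with potential $(\lambda-r_+)(\lambda-r_-)+O(\epsilon)$); since the quantization rule is only claimed to order $\epsilon$, this perturbation is harmless.

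Next, for each admissible $\lambda$ I would construct a strictly increasing change of the independent variable $y=y(x;\lambda)$ and a matrix gauge $\mathbf{P}(x;\lambda)$ conjugating the reduced system to a perturbation $\epsilon\,\dd\mathbf{u}/\dd y=(\widetilde{\mathbf{B}}(y;b(\lambda))+\epsilon\mathbf{R}(y;\lambda,\epsilon))\mathbf{u}$ of the Weber model \eqref{eq:Weber-system}. Matching determinants forces $(\dd y/\dd x)^2(b(\lambda)+\tfrac14 y^2)=-\det(\mathbf{B}(x;\lambda))$; integrating the square root from one turning point to the other and using $\int_{-2\sqrt{-b}}^{2\sqrt{-b}}\sqrt{-(b+\tfrac14 y^2)}\,\dd y=\pi|b|$ fixes the model parameter as $b(\lambda)=-\pi^{-1}\Psi(\lambda)<0$, where $\Psi(\lambda):=\int_{x_-(\lambda)}^{x_+(\lambda)}\sqrt{(\lambda-r_+(x))(\lambda-r_-(x))}\,\dd x$ — exactly the freedom anticipated in Section~\ref{section_Weber_equation}, needed so that $y(\cdot;\lambda)$ extends smoothly across \emph{both} turning points. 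The same relation defines $y(\cdot;\lambda)$ on the complementary intervals via the positive root; I would then verify that $y(\cdot;\lambda)$ inherits enough regularity from $r_\pm\in C^5$, that $y'$ is bounded above and below, that $y(x;\lambda)\to\pm\infty$ as $x\to\pm\infty$, and that $\|\mathbf{R}(\cdot;\lambda,\epsilon)\|$ is $O(1)$ uniformly and integrable in $y$ (here the decay hypotheses $r_+'=O(|x|^{-1})$, $r_+''=O(|x|^{-2})$ and $r_\pm(x)\to\pm1$ enter).

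With $b(\lambda)=\epsilon\,a(\lambda,\epsilon)$ and $z=y/\sqrt{\epsilon}$, the unperturbed model is solved by the parabolic cylinder functions of Section~\ref{section_Weber_equation}: the solution subdominant as $x\to+\infty$ corresponds (by \eqref{eq:UV-asymp}) to $U(a,z)$, and the one subdominant as $x\to-\infty$ to $U(a,-z)$. The exact Weber connection formula (cf.\ \cite[\S12.2]{DLMF}) expresses $U(a,-z)$ in the basis $\{U(a,z),V(a,z)\}$ with coefficients built from $\Gamma(\tfrac12+a)$, so the $L^2$ condition for the \emph{model} holds precisely when $1/\Gamma(\tfrac12+a)=0$; by the reflection formula $\Gamma(\tfrac12+a)\Gamma(\tfrac12-a)=\pi/\cos(\pi a)$ and $a=-\Psi(\lambda)/(\pi\epsilon)$, this is $\cos(\Psi(\lambda)/\epsilon)=0$. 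To transfer this to the true system, I would assume $\lambda$ is an eigenvalue and solve the perturbed model by variation of parameters against the model fundamental matrix — Volterra iteration on $(-\infty,0]$ and on $[0,\infty)$, contracting by integrability of $\mathbf{R}$ and the known growth of the Weber functions — obtaining the true subdominant solutions as model solutions times $\mathbb{I}+O(\epsilon)$. The bound-state condition is then the vanishing of their Wronskian, which equals the model expression ($\propto 1/\Gamma(\tfrac12+a)$) plus $O(\epsilon)$; multiplying by the regular nonvanishing normalization factor $\Gamma(\tfrac12-a)/\pi$ (finite and nondegenerate thanks to $\lambda<1$ and the uniform positivity of $R_+$) yields $\cos(\Psi(\lambda)/\epsilon)=O(\epsilon)$, uniformly for $\lambda\in(\min r_+(\diamond),\lambda_\mathrm{max}]$.

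The hard part is uniformity as $\lambda\downarrow\min r_+(\diamond)$: then $x_\pm(\lambda)\to0$, the two turning points coalesce, $b(\lambda)\to0$, and $a(\lambda,\epsilon)=b(\lambda)/\epsilon$ ceases to be large — the ``low-lying'' regime, which is exactly why the Weber model (rather than two glued Airy models as in Section~\ref{sec:Airy}) is needed. Establishing the smoothness and two-sided bounds on $y(x;\lambda)$, the $O(1)$ bound on $\mathbf{R}$, and the connection/asymptotic estimates for $U(a,\pm z),V(a,z)$ that are valid uniformly both across the coalescence $a=O(1)$ and as $a\to-\infty$ (in the manner of Olver's uniform asymptotics), all \emph{simultaneously} and uniformly down to the bottom of the well, is the technical heart of the argument; I expect it to absorb most of the work, with the Zakharov--Shabat-specific complex-to-real reduction and the extra factor $R_+$ being comparatively routine once the Schr\"odinger case of Theorem~\ref{thm:Schroedinger-BS} has been carried out.
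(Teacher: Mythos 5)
Your overall architecture---factoring $-\det(\mathbf{B}(x;\lambda))=R_+(x;\lambda)R_-(x;\lambda)$ with $R_+$ uniformly positive, a Langer map fixed by matching determinants with the Weber model \eqref{eq:Weber-system} and by $b(\lambda)=-\pi^{-1}\int_{x_-}^{x_+}\sqrt{\det\mathbf{B}}\,\dd x$, parabolic-cylinder connection data, a variation-of-parameters/Volterra transfer, and a Wronskian criterion that must be uniform as the turning points coalesce---is the same as the paper's. The genuine gap is at your reduction step: you stop at a system of the form $\epsilon\,\dd\mathbf{u}/\dd y=(\widetilde{\mathbf{B}}(y;b)+\epsilon\mathbf{R})\mathbf{u}$ and assert that, because the theorem only claims an $O(\epsilon)$ error, an $O(\epsilon)$ perturbation of the coefficient matrix is harmless. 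It is not. Since $\epsilon$ also multiplies the derivative, conjugating by the model fundamental matrix $\mathbf{U}_0$ cancels that factor of $\epsilon$ entirely, and because $\det\mathbf{U}_0=\epsilon^{1/2}\sqrt{2/\pi}$ (cf.\ \eqref{eq:Weber-Wronskian} and \eqref{eq:v-system}) the kernel carries an additional $\epsilon^{-1/2}$; with the quadratic parabolic-cylinder bounds of Lemma~\ref{lem:PC-bound-bounded-a} the resulting integral operator then has norm of order $\log(\epsilon^{-1})$ rather than $o(1)$, so the iteration does not even contract uniformly, let alone deliver Wronskian corrections of size $O(\epsilon)$. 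Generically an $O(\epsilon)$ coefficient perturbation shifts connection coefficients by $O(1)$, and your argument would only yield the vacuous conclusion $\cos(\Psi(\lambda)/\epsilon)=O(1)$. Relatedly, a \emph{constant} gauge cannot turn \eqref{eq:B-ZS} into a real matrix of the required off-diagonal form, since the eigenvector frame of $\mathbf{B}$ is $x$-dependent; any gauge that does the job is $x$-dependent and is exactly the source of the $O(\epsilon)$ terms you propose to discard.

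This is precisely why the paper, after choosing the gauge through the Riccati equation \eqref{eq:Riccati}---which in the Zakharov-Shabat case has the constant solutions $\rho=\pm\ii$, giving $n_2^2=(2g'R_\pm)^{-1}$---performs the further near-identity nilpotent transformation $\mathbf{t}=(\mathbb{I}+\epsilon\mathbf{H})\mathbf{u}$ to remove the $O(\epsilon)$ terms exactly, leaving a perturbation $\epsilon^2Q(y;\lambda)$ as in \eqref{eq:ZS-Q} with the decay bound $|Q(y;\lambda)|\lesssim(1+y^2)^{-1}$ (Propositions~\ref{prop:Schrodinger-Q-estimate} and \ref{prop:ZS-Q-estimate}); proving that bound uniformly down to the coalescence regime is what the derivative estimates on the Langer transformation in Section~\ref{sec:derivatives} are for, after the substitution $\widetilde{x}=\int_0^x\sqrt{\lambda-r_-(\bar x)}\,\dd\bar x$ reduces the Zakharov-Shabat case to the Schr\"odinger hypotheses (Lemma~\ref{lem:ZS-Schroedinger}). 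Only with the $\epsilon^2$ perturbation and this decay does the Volterra argument give $v_1^\pm(0)=1+O(\epsilon\log(\epsilon^{-1}))$ and the sharper $v_2^\pm(0)=O(\epsilon)$ needed for the final $O(\epsilon)$, and even then the regime $a\le-\tfrac12M^2$ requires the weight $Y(a,z)$ and Stirling's formula to compare $v_2^\pm(0)$ with $\Gamma(\tfrac12-a)$. Deferring the uniformity analysis as the ``technical heart'' is fair for a proposal, but the harmlessness claim for the $O(\epsilon)$ remainder is a conceptual error rather than omitted technical work, and as written the proof strategy fails at that point.
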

In both theorems, the integrand of the argument of the cosine has a universal form:  $\det(\mathbf{B}(x;\lambda))^{1/2}$.

The rest of the paper is devoted to the proofs of Theorems~\ref{thm:Schroedinger-BS} and \ref{thm:ZS-BS}.  As both systems are special cases of the first-order system \eqref{eq:general-system}, we start in Section~\ref{sec:mapping} by developing the method of comparison equations for such systems. We show how in the two special cases it is possible to arrive at a model system with a perturbation of the scalar potential by terms proportional to $\epsilon^2$.  We then develop the theory further in the case of a pair of turning points, defining the analogue of Langer's transformation of the independent variable first by solving separable differential equations and then obtaining expressions for the required solution that are more-or-less explicit.  Then, in Section~\ref{sec:derivatives} we prove estimates on the Langer transformation and its first three derivatives that are uniform down to the low-lying case for $\lambda$.  This leads to similarly uniform estimates of the perturbing terms of the Weber model system for two turning points.  Finally, in Section~\ref{sec:Weber-analysis} we apply these estimates to control the effect of the perturbing terms and hence express the eigenvalue condition as a correction of the Wronskian of solutions of Weber's equation decaying in opposite directions.  Appendix~\ref{sec:zeta-t-properties} describes the properties of two related analytic functions that play an important role throughout, and Appendix~\ref{sec:PC-bounds} develops bounds on quadratic expressions involving parabolic cylinder functions that are used in Section~\ref{sec:Weber-analysis}.

\section{Mapping to perturbed model systems}
\label{sec:mapping}
In this section, we will generalize a procedure \cite{Miller_Qin_2014} for reducing a first-order $2\times2$ system \eqref{eq:general-system} with traceless coefficient matrix $\mathbf{B}(x;\lambda)$ to a suitable perturbation of a model system.  In other words, we describe an analogue for first-order systems of Langer's method \cite{Langer34,Langer35,Langer37}.

Starting from \eqref{eq:general-system} with $\mathrm{tr}(\mathbf{B}(x;\lambda))=0$, 
the first step is to introduce a gauge transformation with unit-determinant matrix
$\mathbf{G}(x;\lambda)$ by setting:
\begin{equation}
    \mathbf{w}(x;\lambda) = \mathbf{G}(x;\lambda) \mathbf{s}(x;\lambda).
\end{equation}
The equation satisfied by the new unknown $\mathbf{s}(x;\lambda)$ is given by
\begin{equation}\label{eq: gauge transformed equation}
    \epsilon \frac{\dd\mathbf{s}}{\dd x}(x;\lambda) = \mathbf{G}^{-1}(x;\lambda)\mathbf{B}(x;\lambda)\mathbf{G}(x;\lambda)\mathbf{s}(x;\lambda) - \epsilon \mathbf{G}^{-1}(x;\lambda)\mathbf{G}'(x;\lambda)\mathbf{s}(x;\lambda).
\end{equation}
Notice that this is a perturbation of a version of the original system
in which the coefficient matrix $\mathbf{B}(x;\lambda)$ has undergone conjugation with $\mathbf{G}(x;\lambda)$. Since the determinant is invariant under conjugation, in order to arrive at a leading-order coefficient matrix of a desired target form, it is necessary to introduce an additional scalar factor. This is achieved via a
\textit{Langer transformation}, a change of the independent variable $x$ to $y=g(x;\lambda)$. Indeed, let $x\mapsto g(x;\lambda)$ be a smooth and monotone map (so that the inverse map $x=g^{-1}(y;\lambda)$ exists), and define 
\begin{equation*}
    \mathbf{t}(y;\lambda) :=  \mathbf{s}(x=g^{-1}(y;\lambda);\lambda)
\end{equation*}
By the chain rule one obtains the equivalent equation for $\mathbf{t}(y;\lambda)$:
\begin{equation}\label{eq: gauge transformed equation with Langer}
        \epsilon \frac{\dd \mathbf{t}}{\dd y}(y;\lambda) = \frac{1}{g'(x;\lambda)}\Big( \mathbf{G}(x;\lambda)^{-1} 
        \mathbf{B}(x;\lambda)  \mathbf{G}(x;\lambda) \Big) \mathbf{t}(y;\lambda) 
        - \epsilon \frac{1}{g'(x;\lambda)} \mathbf{G}(x;\lambda)^{-1} \mathbf{G}'(x;\lambda)\mathbf{t}(y;\lambda).
    \end{equation}
Here we think of the coefficient matrices on the right-hand side as functions of $y$ via $x=g^{-1}(y;\lambda)$. \subsection{Determining the form of the gauge transformation $\mathbf{G}(x;\lambda)$}
Suppose we require that the leading-order coefficient matrix on the right-hand side of \eqref{eq: gauge transformed equation with Langer} have the form 
\begin{equation}
\mathbf{M}(y)=\begin{bmatrix}
    0 & 1 \\
    m(y) & 0 
\end{bmatrix},
\end{equation}
so that after neglecting small coefficients one has a system equivalent to a second-order scalar equation $\epsilon^2 t_1''(y;\lambda)=m(y)t_1(y;\lambda)$.
Thus we require:
\begin{equation}\label{eq: modelM}
    \frac{1}{g'(x;\lambda)}\Big( \mathbf{G}(x;\lambda)^{-1} 
        \mathbf{B}(x;\lambda)  \mathbf{G}(x;\lambda) \Big)
        = \mathbf{M}(y).
\end{equation}
From this, we take determinants and immediately derive the relationship
\begin{equation}\label{eq:detB-detM-relationship}
    -\frac{1}{g'(x;\lambda)} \det(\mathbf{B}(x;\lambda)) = -g'(x;\lambda)\det(\mathbf{M}(y)) = g'(x;\lambda)m(y),
\end{equation}
which may be regarded as a first-order differential equation for $y=g(x;\lambda)$.  The existence of a suitable solution defining an invertible Langer transformation is not trivial and will be considered later.  However we may note that if $\det(\mathbf{B}(x;\lambda))$ has a fixed sign and one takes $m(y)=-\mathrm{sgn}(\det(\mathbf{B}(x;\lambda)))$ independent of $y$, then $g'(x;\lambda)=\sqrt{|\det(\mathbf{B}(x;\lambda))|}$, which upon integration produces the well-known Liouville-Green transformation $y=g(x;\lambda)$ in terms of which solutions of the model problem for $m(y)= -1$ (resp., $m(y)=1$) are as shown in \eqref{eq:model-solution-minus-one} (resp., \eqref{eq:model-solution-plus-one}).  These reproduce the standard WKBJ approximation formul\ae\ for problems without turning points. 

Using \eqref{eq:detB-detM-relationship}, we can rewrite \eqref{eq: modelM} as a homogeneous linear system for the elements of $\mathbf{G}(x;\lambda)$:
\begin{equation}\label{eq: systemforG}
\begin{bmatrix}
    g'(x;\lambda)B_{11}(x;\lambda) & \det(\mathbf{B}(x;\lambda)) & g'(x;\lambda)B_{12}(x;\lambda) & 0 \\
    -g'(x;\lambda) & B_{11}(x;\lambda) & 0 & B_{12}(x;\lambda) \\
    g'(x;\lambda)B_{21}(x;\lambda) & 0 & g'(x;\lambda)B_{22}(x;\lambda) & \det(\mathbf{B}(x;\lambda)) \\
    0 & B_{21}(x;\lambda) & -g'(x;\lambda) & B_{22}(x;\lambda)
\end{bmatrix}
\begin{bmatrix}
     G_{11}(x;\lambda) \\ G_{12}(x;\lambda) \\
     G_{21}(x;\lambda) \\ G_{22}(x;\lambda) \\
\end{bmatrix} = \mathbf{0}
\end{equation}
Under the assumption in force that $B_{11}(x;\lambda)+B_{22}(x;\lambda)=0$, there exist nontrivial solutions of this system, because the determinant of the matrix on the left-hand side is $g'(x;\lambda)^2\det(\mathbf{B}(x;\lambda))\mathrm{tr}(\mathbf{B}(x;\lambda))^2=0$.   
Moreover, row reduction shows that the homogeneous system \eqref{eq: systemforG} has rank 2, and thus $\mathbf{G}(x;\lambda)$ can be written in the form 
\begin{equation}\label{Gstructure}
\mathbf{G}(x;\lambda)=
\begin{bmatrix}
    B_{11}(x;\lambda)n_1(x;\lambda) + B_{12}(x;\lambda)n_2(x;\lambda) & g'(x;\lambda)n_1(x;\lambda) \\
    B_{21}(x;\lambda)n_1(x;\lambda) + B_{22}(x;\lambda)n_2(x;\lambda) & g'(x;\lambda)n_2(x;\lambda)
\end{bmatrix}
\end{equation}
where $n_1(x;\lambda)$ and $n_2(x;\lambda)$ are coordinates for the nullspace that are free up to an overall scalar multiple determined so that $\det(\mathbf{G}(x;\lambda))=1$ holds. With this form of $\mathbf{G}(x;\lambda)$, equation \eqref{eq: gauge transformed equation with Langer} can now be written in the form
\begin{equation}\label{eq:perturbed-model}
        \epsilon \frac{\dd  \mathbf{t}}{\dd y}(y;\lambda)= \mathbf{M}(y) \mathbf{t}(y;\lambda) \\
        - \epsilon \mathbf{P}(x=g^{-1}(y;\lambda);\lambda) \mathbf{t}(y;\lambda)
        ,\hspace{1em}
        \mathbf{P}(x;\lambda) := \frac{1}{g'(x;\lambda)} \mathbf{G}^{-1}(x;\lambda)\mathbf{G}'(x;\lambda).
    \end{equation}
Using \eqref{Gstructure} and $\det(\mathbf{G}(x;\lambda))=1$, the elements of $\mathbf{P}(x;\lambda)$ can be calculated as follows:
\begin{equation}
    \begin{split}
        P_{11}(x;\lambda)=-P_{22}(x;\lambda)&=\frac{1}{2}g'(x;\lambda)n_2(x;\lambda)^2\left[\frac{\dd}{\dd x}\left(\frac{B_{11}(x;\lambda)\rho(x;\lambda) + B_{12}(x;\lambda)}{g'(x;\lambda)}\right)\right.\\
        &\quad\left.-\rho(x;\lambda)^2\frac{\dd}{\dd x}\left(\frac{B_{21}(x;\lambda)\rho(x;\lambda)-B_{11}(x;\lambda)}{g'(x;\lambda)\rho(x;\lambda)}\right)\right]\\
        P_{12}(x;\lambda)&=g'(x;\lambda)n_2(x;\lambda)^2\rho'(x;\lambda)=-\frac{\det(\mathbf{B}(x;\lambda))}{g'(x;\lambda)m(z)}n_2(x;\lambda)^2\rho'(x;\lambda)\\
        P_{21}(x;\lambda)&=\frac{n_2(x;\lambda)^2}{g'(x;\lambda)}(B_{11}(x;\lambda)\rho(x;\lambda)+B_{12}(x;\lambda))^2\frac{\dd}{\dd x}\left(\frac{B_{21}(x;\lambda)\rho(x;\lambda)-B_{11}(x;\lambda)}{B_{11}(x;\lambda)\rho(x;\lambda)+B_{12}(x;\lambda)}\right),
    \end{split}
    \label{eq:P-elements}
\end{equation}
where $\rho(x;\lambda):=n_1(x;\lambda)/n_2(x;\lambda)$, and the second expression for $P_{12}(x;\lambda)$ follows from the first using \eqref{eq:detB-detM-relationship}.
\subsection{Pushing the perturbation to higher order}
Because the factor of $\epsilon$ is paired with both with the differential term on the left-hand side of equation \eqref{eq:perturbed-model} and the perturbation $\mathbf{P}(x;\lambda)$, the perturbation term is not directly controllable. However, the terms proportional to $\epsilon$ on the right-hand side can be removed with a near-identity transformation:
\begin{equation}\label{eq: nilpotent perturbation}
        \mathbf{t}(y;\lambda) = (\mathbb{I}+\epsilon \mathbf{H}(x;\lambda) )\mathbf{u}(y;\lambda)
    \end{equation}
where $\mathbf{H}(x;\lambda)$ is independent of $\epsilon$. It will be useful if the transformation can be inverted explicitly, which is the case when $\mathbf{H}(x;\lambda)$ is a nilpotent matrix. Such a matrix can be expressed as an outer product in terms of a vector $\mathbf{h}(x;\lambda): = (h_1(x;\lambda),h_2(x;\lambda))^\top$:
\begin{equation}\label{def:nilpotent-H}
        \mathbf{H}(x;\lambda) = \ii\sigma_2\mathbf{h}(x;\lambda)\mathbf{h}(x;\lambda)^\top 
        = \begin{bmatrix}
            h_1(x;\lambda)h_2(x;\lambda) & h_2(x;\lambda)^2 \\
            -h_1(x;\lambda)^2 & -h_1(x;\lambda)h_2(x;\lambda)
        \end{bmatrix}
    \end{equation}
Since $\mathbf{H}(x;\lambda)^2 = \mathbf{0}$, $(\mathbb{I}+\epsilon \mathbf{H}(x;\lambda))^{-1} = \mathbb{I}-\epsilon \mathbf{H}(x;\lambda) $ and we may rewrite equation \eqref{eq: gauge transformed equation with Langer} in terms of the new unknown $\mathbf{u}(y;\lambda)$. This equation is of the form
\begin{equation}\label{eq:u}
        \epsilon \frac{\dd \mathbf{u}}{\dd y}(y;\lambda) = \Big( \mathbf{M }(y) + \epsilon \mathbf{C}_1(x;\lambda) + \epsilon^2 \mathbf{C}_2(x;\lambda) + \epsilon^3 \mathbf{C}_3(x;\lambda) \Big)\mathbf{u}(y;\lambda)
    \end{equation}
    \begin{align*}
        \mathbf{C}_1(x;\lambda) &:= [\mathbf{M}(y),\mathbf{H}(x;\lambda)] - \mathbf{P}(x;\lambda) \\
        \mathbf{C}_2(x;\lambda) &:= -[\mathbf{P}(x;\lambda),\mathbf{H}(x;\lambda)] - \mathbf{H}(x;\lambda)\mathbf{M}(y)\mathbf{H}(x;\lambda) - \frac{1}{g'(x;\lambda)} \mathbf{H}'(x;\lambda) \\
        \mathbf{C}_3(x;\lambda) &:= \frac{1}{g'(x;\lambda)} \mathbf{H}(x;\lambda) \mathbf{H}'(x;\lambda) + \mathbf{H}(x;\lambda)\mathbf{P}(x;\lambda)\mathbf{H}(x;\lambda)
    \end{align*}
The purpose of introducing the transformation from $\mathbf{t}(y;\lambda)$ to $\mathbf{u}(y;\lambda)$ was to remove terms proportional to $\epsilon$ from the right-hand side, so we demand that $\mathbf{C}_1(x;\lambda)=\mathbf{0}$. Note that $[\mathbf{M}(y),\mathbf{H}(x;\lambda)]$ is a matrix of the form
\begin{equation}
    [\mathbf{M}(y),\mathbf{H}(x;\lambda)]=\begin{bmatrix}-h_1(x;\lambda)^2-m(y)h_2(x;\lambda)^2 & -2h_1(x;\lambda)h_2(x;\lambda)\\2m(y)h_1(x;\lambda)h_2(x;\lambda) & h_1(x;\lambda)^2+m(y)h_2(x;\lambda)^2\end{bmatrix}.
\end{equation}
In particular, $\mathbf{C}_1(x;\lambda)=\mathbf{0}$ requires that $\mathrm{tr}(\mathbf{P}(x;\lambda))=0$, which is automatically satisfied according to \eqref{eq:P-elements}.  A second condition this imposes on $\mathbf{P}(x;\lambda)$ without consideration of the unknown vector $\mathbf{h}(x;\lambda)$ is that
\begin{equation}\label{Pstructure}
P_{21}(x;\lambda) = -m(y)P_{12}(x;\lambda).
\end{equation}
Again using \eqref{eq:P-elements}, the condition \eqref{Pstructure} is satisfied if the ratio $\rho(x;\lambda)=n_1(x;\lambda)/n_2(x;\lambda)$ satisfies a Riccati equation: 
\begin{multline}\label{eq:Riccati}
    \rho'(x;\lambda) = \Bigg(\frac{B_{11}(x;\lambda)B_{21}'(x;\lambda) - B_{21}(x;\lambda)B_{11}'(x;\lambda)}{\det \mathbf{B}(x;\lambda)}\Bigg)\rho(x;\lambda)^2\\ + \Bigg( \frac{B_{12}(x;\lambda)B_{21}'(x;\lambda)-B_{21}(x;\lambda)B_{12}'(x;\lambda)}{\det \mathbf{B}(x;\lambda)}\Bigg)\rho(x;\lambda)\\ + \Bigg( \frac{B_{11}(x;\lambda)B_{12}'(x;\lambda)-B_{12}(x;\lambda)B_{11}'(x;\lambda)}{\det \mathbf{B}(x;\lambda)}\Bigg).
\end{multline}
From a solution of this equation we may eliminate $n_1(x;\lambda)$ via  $n_1(x;\lambda)=n_2(x;\lambda)\rho(x;\lambda)$, and then using \eqref{Gstructure} we set $\det(\mathbf{G}(x;\lambda))=1$ and find $n_2(x;\lambda)^2$ in the form  
\begin{equation}\label{p2-constantrho}
    n_2(x;\lambda)^2 = \Bigg( g'(x;\lambda)\Big(B_{12}(x;\lambda) + 2B_{11}(x;\lambda)\rho(x;\lambda) - B_{21}(x;\lambda)\rho(x;\lambda)^2\Big)\Bigg)^{-1}.
\end{equation}
With $n_2(x;\lambda)^2$ determined, there remain two additional conditions in the equation $\mathbf{C}_1(x;\lambda)=\mathbf{0}$, which may be viewed as conditions on the elements of $\mathbf{h}(x;\lambda)$: 
\begin{equation}
    h_1(x;\lambda)h_2(x;\lambda)=-\frac{1}{2}P_{12}(x;\lambda)
    \label{eq:product-of-h1h2}
\end{equation}
and
\begin{equation}h_1(x;\lambda)^2+m(y)h_2(x;\lambda)^2=P_{22}(x;\lambda)
\label{eq:squares-of-h1h2}
\end{equation}
where in both conditions, the right-hand side is determined by \eqref{eq:P-elements} with \eqref{p2-constantrho}.

\subsection{The case of constant $\rho(x;\lambda)$}
In the two examples that we will explore in this article, there exist constant solutions $\rho(x;\lambda)$ of the Riccati equation \eqref{eq:Riccati}. If $\rho(x;\lambda)$ is constant, then according to \eqref{eq:P-elements}, we automatically obtain $P_{12}(x;\lambda) = 0$. Then from \eqref{Pstructure}, $P_{21}(x;\lambda) = 0$ also, so $\mathbf{P}(x;\lambda) = -P_{22}(x;\lambda)\sigma_3$ (diagonal matrix).  Furthermore, once it is known that $P_{12}(x,\lambda)=0$, from \eqref{eq:product-of-h1h2} we see that either $h_1(x;\lambda)=0$ or $h_2(x;\lambda)=0$ (for each $x\in\mathbb{R}$, however we shall assume that given $\lambda$ either $h_1$ or $h_2$ vanishes identically).

To ensure that the perturbation in $\mathbf{C}_2$ amounts to a scalar perturbation of the model potential $m(y)$ it is necessary to assume that $h_2(x;\lambda)\equiv 0$.  Then \eqref{eq:squares-of-h1h2} gives $h_1(x;\lambda)^2=P_{22}(x;\lambda)$, and a computation shows that
\begin{equation}    \mathbf{C}_2(x;\lambda)=\begin{bmatrix}0&0\\Q(y;\lambda) & 0\end{bmatrix}
\end{equation}
in which
\begin{equation}
    Q(y;\lambda):=2h_1(x;\lambda)^2P_{22}(x;\lambda)-h_1(x;\lambda)^4+\frac{1}{g'(x;\lambda)}\frac{\dd}{\dd x}h_1(x;\lambda)^2=P_{22}(x;\lambda)^2+\frac{P_{22}'(x;\lambda)}{g'(x;\lambda)},
    \label{eq:Q-constant-rho}
\end{equation}
where we substitute $x=g^{-1}(y;\lambda)$ to define $Q(y;\lambda)$,
and we also obtain that $\mathbf{C}_3(x;\lambda)=\mathbf{0}$.  The perturbing terms in the differential equation \eqref{eq:u} then amount to a correction of the coefficient $m(y)$ by $\epsilon^2 Q(y;\lambda)$.

We now explain the details for the two applications of interest in this paper.
\subsubsection{The Schr\"{o}dinger equation}
Recall the coefficient matrix $\mathbf{B}(x;\lambda)$ from \eqref{eq:B-Schrodinger}.  
Then as $B_{11}(x;\lambda)=B_{22}(x;\lambda)=0$ while $B_{12}(x;\lambda)=1$ and $B_{21}(x;\lambda)=\V(x;\lambda)-\lambda$, the Riccati equation \eqref{eq:Riccati} reduces to the linear equation
\begin{equation}
    \rho'(x;\lambda) = -\frac{\V'(x;\lambda)}{\V(x;\lambda)-\lambda}\rho(x;\lambda)
\end{equation} 
which admits the constant (trivial) solution $\rho(x;\lambda)\equiv 0$.  Thus also $n_1(x;\lambda)=0$ and from \eqref{p2-constantrho} we obtain $n_2(x;\lambda)=g'(x;\lambda)^{-1/2}$ after taking a positive square root.
According to \eqref{Gstructure}, the gauge matrix $\mathbf{G}(x;\lambda)$ in this case takes the simple diagonal form $\mathbf{G}(x;\lambda)=g(x;\lambda)^{-\sigma_3/2}$. From \eqref{eq:P-elements} we then obtain that
\begin{equation}
    P_{22}(x;\lambda)=-\frac{1}{2}\frac{\dd}{\dd x}\frac{1}{g'(x;\lambda)}=\frac{g''(x;\lambda)}{2g'(x;\lambda)^2}.
\end{equation}
Thus, the nilpotent matrix $\mathbf{H}(x;\lambda)$ with $h_2(x;\lambda)=0$ is simply
\begin{equation}
    \mathbf{H}(x;\lambda)=\begin{bmatrix}0 & 0\\-P_{22}(x;\lambda) & 0\end{bmatrix} = \begin{bmatrix}0 & 0\\-g''(x;\lambda)/(2g'(x;\lambda)^2) & 0\end{bmatrix}.
\end{equation}
Using \eqref{eq:Q-constant-rho} then gives
\begin{equation}
    Q(y;\lambda)=\frac{g''(x;\lambda)^2}{4g'(x;\lambda)^4} +\frac{1}{g'(x;\lambda)}\left[\frac{g'''(x;\lambda)}{2g'(x;\lambda)^2} -\frac{g''(x;\lambda)^2}{g'(x;\lambda)^3}\right]=-\frac{3g''(x;\lambda)^2}{4g'(x;\lambda)^4} +\frac{g'''(x;\lambda)}{2g'(x;\lambda)^3},\quad y=g(x;\lambda).
    \label{eq:Schroedinger-Q}
\end{equation}
Combining the results shows that 
the total gauge transformation $\mathbf{w}=\mathbf{G}(\mathbb{I}+\epsilon\mathbf{H})\mathbf{u}$ from the original unknown $\mathbf{w}(x;\lambda)$ to $\mathbf{u}(y;\lambda)$ can be written in the form
\begin{equation}
\begin{split}
    \mathbf{G}(x;\lambda)(\mathbb{I}+\epsilon\mathbf{H}(x;\lambda)) &=\begin{bmatrix}
        a(x;\lambda) & 0 \\
        0 & a(x;\lambda)g'(x;\lambda)
    \end{bmatrix} \Bigg( \mathbb{I} + \epsilon\begin{bmatrix}
        0 & 0 \\
       a'(x;\lambda)/(a(x;\lambda)g'(x;\lambda)) & 0
    \end{bmatrix}\Bigg)\\ &= 
    \begin{bmatrix}
    a(x;\lambda) & 0 \\
    \epsilon a'(x;\lambda) & a(x;\lambda)g'(x;\lambda)
    \end{bmatrix}
\end{split}
\end{equation}
in which $a(x;\lambda):=g'(x;\lambda)^{-\frac{1}{2}}$.

\subsubsection{The Zakharov-Shabat system} 
Recall the matrix $\mathbf{B}(x;\lambda)$ defined in \eqref{eq:B-ZS} relevant to the Zakharov-Shabat system. In this case, the Riccati equation \eqref{eq:Riccati} reduces to:
\begin{equation}
\rho'(x;\lambda) = \frac{-\ii\Big(\lambda + \frac{1}{2}S'(x)\Big)A'(x) + \frac{1}{2}\ii S''(x)A(x)}{\Big(\lambda + \frac{1}{2}S'(x)\Big)^2 - A(x)^2}\Bigg( \rho(x;\lambda)^2+1\Bigg)
\end{equation} 
which admits constant solutions $\rho(x;\lambda) = \pm \ii$. So also $n_1(x;\lambda)=\pm\ii n_2(x;\lambda)$.  With $\rho=\pm\ii$, \eqref{p2-constantrho} becomes
\begin{equation}
    n_2(x;\lambda)^2 = 
        \Bigg( 2g'(x;\lambda) R_\pm(x;\lambda)\Bigg)^{-1}. 
\end{equation}
Under our assumptions, given $\lambda$, we choose the sign  of $\rho=\pm\ii$ so that $R_\pm(x;\lambda)$ is nonvanishing, and therefore $n_2(x;\lambda)^2$ is bounded. Handling both cases simultaneously, the possible gauge transformations $\mathbf{G}(x;\lambda)$ take the form
\begin{equation}
\begin{split}
    \mathbf{G}(x;\lambda) &= n_2(x;\lambda)
    \begin{bmatrix}
    \rho(x;\lambda) B_{11}(x;\lambda) + B_{12}(x;\lambda) & \rho(x;\lambda) g'(x;\lambda) \\
    \rho(x;\lambda) B_{21}(x;\lambda) -B_{11}(x;\lambda) & g'(x;\lambda)
    \end{bmatrix} \\
    &= \Bigg( 2g'(x;\lambda) R_\pm(x;\lambda)\Bigg)^{-1/2}
    \begin{bmatrix}
    R_\pm(x;\lambda) & \pm \ii g'(x;\lambda)\\
    \pm \ii R_\pm(x;\lambda) & g'(x;\lambda)
    \end{bmatrix}.
    \end{split}
\end{equation}
From \eqref{eq:P-elements}, we have
\begin{equation*}
    P_{22}(x;\lambda) = \frac{1}{2g'(x;\lambda)}\frac{\dd}{\dd x}\left( \log\left( \frac{g'(x;\lambda)}{R_\pm(x;\lambda)}\right)\right),
\end{equation*}
which, by \eqref{eq:Q-constant-rho}, leads to
\begin{equation}
    Q(y;\lambda) = -\frac{3g''(x;\lambda)^2}{4g'(x;\lambda)^4} + \frac{g'''(x;\lambda)}{2g'(x;\lambda)^3}-\frac{1}{g'(x;\lambda)^2}\left[-\frac{3R_\pm'(x;\lambda)^2}{4R_\pm(x;\lambda)^2} + \frac{R_\pm''(x;\lambda)}{2R_\pm(x;\lambda)}\right],\quad y=g(x;\lambda).
    \label{eq:ZS-Q}
\end{equation}

\subsection{Weber model and Langer transformations for problems with two turning points}
Now we focus on the situation germane to Theorems~\ref{thm:Schroedinger-BS} and \ref{thm:ZS-BS}: that $\lambda\in\mathbb{R}$ is such that there exist two simple real roots (turning points) $x=x_\pm(\lambda)$ of $\det(\mathbf{B}(x;\lambda))$ and such that $\det(\mathbf{B}(x;\lambda))<0$ holds for $x<x_-(\lambda)$ and for $x>x_+(\lambda)$, while $\det(\mathbf{B}(x;\lambda))>0$ holds for $x_-(\lambda)<x<x_+(\lambda)$.  To model this situation, we assume that the model potential $m(y)$ is a quadratic function of $y$ with a parameter $b<0$:
\begin{equation}
    m(y)=m(y;b):=\frac{1}{4}y^2+b,\quad b<0.
\end{equation}

The Langer transformation $x\mapsto y=g(x;\lambda)$ is necessarily a solution of the first-order ordinary differential equation \eqref{eq:detB-detM-relationship}, which now takes the form
\begin{equation}
    g'(x;\lambda)^2\cdot\left(\frac{1}{4}g(x;\lambda)^2+b\right)=-\det(\mathbf{B}(x;\lambda)).
    \label{eq:Langer-ODE}
\end{equation}
Since $y=g(x;\lambda)$ is to be invertible, we want $g'(x;\lambda)>0$ for all $x\in\mathbb{R}$, so it is necessary that $g(x;\lambda)$ satisfy two auxiliary conditions:
\begin{equation}
    \frac{1}{4}g(x_\pm(\lambda);\lambda)^2 + b = 0 \iff g(x_\pm(\lambda);\lambda))=\pm 2\sqrt{-b}.
    \label{eq:LangerICs}
\end{equation}
Imposing only the condition at, say, $x=x_+(\lambda)$, we may separate the variables and integrate from this turning point to the right:
\begin{equation}
    \int_{2\sqrt{-b}}^y\sqrt{\frac{1}{4}\bar{y}^2+b}\,\dd \bar{y} = \int_{x_+(\lambda)}^x\sqrt{-\det(\mathbf{B}(\bar{x};\lambda))}\,\dd \bar{x},\quad x>x_+(\lambda),
    \label{eq:LangerImplicitRight}
\end{equation}
and to the left:
\begin{equation}
    \int_{2\sqrt{-b}}^y\sqrt{-b-\frac{1}{4}\bar{y}^2}\,\dd \bar{y} = \int_{x_+(\lambda)}^x\sqrt{\det(\mathbf{B}(\bar{x};\lambda))}\,\dd \bar{x},\quad x_-(\lambda)<x<x_+(\lambda).
\label{eq:LangerImplicitMid}
\end{equation}
Letting $x\downarrow x_-(\lambda)$ in this formula and enforcing the remaining condition from \eqref{eq:LangerICs} on $y=g(x;\lambda)$ at $x=x_-(\lambda)$ yields a condition determining $b<0$ in terms of $\lambda\in\mathbb{R}$:
\begin{equation}
\begin{split}
   -\frac{1}{\pi} \int_{x_-(\lambda)}^{x_+(\lambda)}\sqrt{\det(\mathbf{B}(\bar{x};\lambda))}\,\dd \bar{x}&=-\frac{1}{\pi}\int_{-2\sqrt{-b}}^{2\sqrt{-b}}\sqrt{-b-\frac{1}{4}\bar{y}^2}\,\dd \bar{y}\\ & = 
    \frac{2b}{\pi}\int_{-1}^1\sqrt{1-\gamma^2}\,\dd\gamma\\
    &=b.
\end{split}
\label{eq:b-of-lambda}
\end{equation}
Then, integrating to the left from $x=x_-(\lambda)$ imposing that $g(x_-(\lambda);\lambda)=-2\sqrt{-b}$ gives
\begin{equation}
    \int_{-2\sqrt{-b}}^y\sqrt{\frac{1}{4}\bar{y}^2+b}\,\dd \bar{y} = \int_{x_-(\lambda)}^x\sqrt{-\det(\mathbf{B}(\bar{x};\lambda))}\,\dd \bar{x},\quad x<x_-(\lambda),
\label{eq:LangerImplicitLeft}
\end{equation}
and when $b=b(\lambda)$ satisfies \eqref{eq:b-of-lambda}, the three equations \eqref{eq:LangerImplicitRight}, \eqref{eq:LangerImplicitMid}, and \eqref{eq:LangerImplicitLeft} together define $y=g(x;\lambda)$ as a continuous function of $x\in\mathbb{R}$ that satisfies the differential equation \eqref{eq:Langer-ODE} except possibly at the turning points $x=x_\pm(\lambda)$ and that satisfies both auxiliary conditions \eqref{eq:LangerICs}.

In the case of the Schr\"odinger equation, where $\mathbf{B}(x;\lambda)$ is given by \eqref{eq:B-Schrodinger}, we have simply $\det(\mathbf{B}(x;\lambda))=\lambda-\V(x;\lambda)$.  
We now show that in the Zakharov-Shabat case, where $\mathbf{B}(x;\lambda)$ is instead given by \eqref{eq:B-ZS},  $\det(\mathbf{B}(x;\lambda))$ can be written in a similar form at the cost of a change of coordinate $x$.  Indeed, according to the assumptions of Theorem~\ref{thm:ZS-BS}, there is a factor $\lambda-r_-(x)$ of $\det(\mathbf{B}(x;\lambda))$ that is strictly positive and of class $C^5(\mathbb{R})$, so introducing a shifted spectral parameter $\widetilde{\lambda}:=\lambda-\min r_+(\diamond)$ and variable $\widetilde{x}$ via the invertible transformation
\begin{equation}
    \widetilde{x}=\widetilde{x}(x;\lambda):=\int_0^x\sqrt{\lambda-r_-(\bar{x})}\,\dd \bar{x}\implies  \frac{\dd\widetilde{x}}{\dd x}(x;\lambda)=\sqrt{\lambda-r_-(x)},
\end{equation}
and then defining an energy-dependent potential function by \begin{equation}
    \V(\widetilde{x};\widetilde{\lambda}):=r_+(x)-\min r_+(\diamond),\quad \widetilde{x}=\widetilde{x}(x;\lambda),
    \label{eq:ZS-potential}
\end{equation} 
(i.e., composition of $r_+(x)$, shifted to make the minimum value zero, with the inverse  $x=x(\widetilde{x};\lambda)$) and setting $\widetilde{x}_\pm(\widetilde{\lambda}):=\widetilde{x}(x_\pm(\lambda);\lambda)$ yields the differential identity $\sqrt{\pm\det(\mathbf{B}(x;\lambda))}\,\dd x = \sqrt{\pm(\lambda-\V(\widetilde{x};\widetilde{\lambda}))}\,\dd \widetilde{x}$.  Hence the integrals appearing on the right-hand sides of \eqref{eq:LangerImplicitRight}, \eqref{eq:LangerImplicitMid}, and \eqref{eq:LangerImplicitLeft} can be written in a common form for both the Schr\"odinger and Zakharov-Shabat cases; after dropping the tilde, in both cases the Langer transformation is implicitly defined for $x>x_-(\lambda)$ by the equation
\begin{equation}
    \int_{2\sqrt{-b}}^y\sqrt{\left|\frac{1}{4}\bar{y}^2+b\right|}\,\dd \bar{y} = \int_{x_+(\lambda)}^x\sqrt{|\V(\bar{x};\lambda)-\lambda|}\,\dd \bar{x},\quad x>x_-(\lambda),
\label{eq:general-Langer}
\end{equation}
where
\begin{equation}
    b=-\frac{1}{\pi}\int_{x_-(\lambda)}^{x_+(\lambda)}\sqrt{\lambda-\V(\bar{x};\lambda)}\,\dd \bar{x}.
    \label{eq:general-b}
\end{equation}
We then have the following, whose simple proof is omitted.
\begin{lemma}[Universal form for Langer transformation]
    The function $\V(x;\lambda)$ defined from $r_\pm(\diamond)$ satisfying the hypotheses of Theorem~\ref{thm:ZS-BS} by \eqref{eq:ZS-potential} satisfies the hypotheses of Theorem~\ref{thm:Schroedinger-BS} with exponents $p_\pm=0$ and leading coefficients $\V_\pm=1-\min r_+(\diamond)$.
    \label{lem:ZS-Schroedinger}
\end{lemma}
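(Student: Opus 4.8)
The plan is to verify, one requirement at a time, that the function $\V$ produced from $r_\pm$ by \eqref{eq:ZS-potential} meets every hypothesis placed on the Schr\"odinger potential in Theorem~\ref{thm:Schroedinger-BS}, regarding \eqref{eq:ZS-potential} as the transport of $r_+$ through the coordinate change $x\mapsto\widetilde x$ and using only properties of that change together with the hypotheses of Theorem~\ref{thm:ZS-BS}. First I would record those properties. Write $\phi(x;\lambda):=\sqrt{\lambda-r_-(x)}$. For $\lambda\in(\min r_+(\diamond),\lambda_\mathrm{max}]$ (here $\widetilde\lambda:=\lambda-\min r_+(\diamond)$ is the corresponding Schr\"odinger energy, small precisely when $\lambda$ is near $\min r_+(\diamond)$), the inequalities $-1\le r_-(x)\le\max r_-(\diamond)<\min r_+(\diamond)\le\lambda<1$ give $0<c\le\phi(x;\lambda)\le C<\infty$ with $c,C$ depending only on the data. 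Since $r_-\in C^5(\mathbb{R})$ and $\lambda-r_-$ is bounded below by a positive constant, $\phi(\diamond;\lambda)\in C^5(\mathbb{R})$, so $x\mapsto\widetilde x(x;\lambda)=\int_0^x\phi(s;\lambda)\,\dd s$ is a strictly increasing $C^6$ bijection of $\mathbb{R}$ fixing the origin; by the inverse function theorem its inverse $x=x(\widetilde x;\lambda)$ is $C^6$, with $\partial_{\widetilde x}x=1/\phi(x;\lambda)$ and $c|x|\le|\widetilde x|\le C|x|$. The mixed $\widetilde x$- and $\lambda$-derivatives of $x(\widetilde x;\lambda)$ are produced by implicit differentiation of $\widetilde x=\int_0^x\phi$ and are jointly continuous in $(\widetilde x,\lambda)$ near $\widetilde x=0$.

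I would then verify the Schr\"odinger hypotheses in four groups. \emph{Regularity}: $\V(\widetilde x;\widetilde\lambda)=r_+(x(\widetilde x;\lambda))-\min r_+(\diamond)$ is a composition of $r_+\in C^5(\mathbb{R})$ with the $C^6$ map $x(\diamond;\lambda)$, so $\V(\diamond;\widetilde\lambda)\in C^5(\mathbb{R})$; joint continuity of $\partial_{\widetilde\lambda}^m\partial_{\widetilde x}^n\V$ for $m\le2$, $n\le5$ near $\widetilde x=0$ follows by differentiating the chain-rule expansion of $\V$, every factor that appears ($r_+^{(j)}(x)$, powers of $\phi(x;\lambda)$, and the $\widetilde x$- and $\lambda$-derivatives of $x(\widetilde x;\lambda)$) being jointly continuous there. \emph{Bottom of the well}: the diffeomorphism fixes $0$, so $\widetilde x=0$ corresponds to $x=0$; from $r_+'(0)=0$ and $\pm r_+'(x)>0$ for $\pm x>0$ the origin is the unique global minimizer of $r_+$, whence $\V(0;\widetilde\lambda)=r_+(0)-\min r_+(\diamond)=0$; using $\partial_{\widetilde x}\V=r_+'(x)/\phi(x;\lambda)$ one gets $\V'(0;\widetilde\lambda)=0$ and $\V''(0;\widetilde\lambda)=r_+''(0)/\phi(0;\lambda)^2>0$ since $r_+''(0)>0$. \emph{Monotonicity}: $\mathrm{sgn}\,\V'(\widetilde x;\widetilde\lambda)=\mathrm{sgn}\,r_+'(x)=\mathrm{sgn}\,x=\mathrm{sgn}\,\widetilde x$, so $\pm\V'(\widetilde x;\widetilde\lambda)>0$ for $\pm\widetilde x>0$. \emph{Far field} ($p_\pm=0$): as $\widetilde x\to\pm\infty$ one has $x\to\pm\infty$, so $r_+(x)\to1$ in both directions and $\V(\widetilde x;\widetilde\lambda)\to1-\min r_+(\diamond)=:\V_\pm$, which for $p_\pm=0$ is exactly $\V=\V_\pm|\widetilde x|^{p_\pm}(1+o(1))$; differentiating, $\V'=r_+'(x)/\phi(x;\lambda)=O(|x|^{-1})=O(|\widetilde x|^{-1})$ and $\V''=r_+''(x)/\phi(x;\lambda)^2+r_+'(x)r_-'(x)/(2\phi(x;\lambda)^4)=O(|\widetilde x|^{-2})$, using the decay of $r_+'$, $r_+''$ and $r_-'$, the boundedness of $\phi$ away from zero, and $|\widetilde x|\asymp|x|$, all uniformly for $\lambda$ near $\min r_+(\diamond)$ because $r_\pm$ are $\lambda$-independent and $\phi$ is uniformly controlled. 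This exhausts the hypotheses of Theorem~\ref{thm:Schroedinger-BS} and simultaneously identifies $p_\pm=0$ and $\V_\pm=1-\min r_+(\diamond)$.

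The entire argument is a transfer of hypotheses through a uniformly monotone $C^6$ diffeomorphism, which is why the proof can be left out. The one step with any content is the joint-continuity claim in the regularity group, since each differentiation in $\widetilde x$ or $\lambda$ raises by one the order of the $r_\pm$-derivatives appearing, so a careful write-up must check that the order budget afforded by $r_\pm\in C^5(\mathbb{R})$ is respected on a neighborhood of $\widetilde x=0$; I would expect this bookkeeping, together with the far-field bound on $\V''$ (which, unlike the other far-field bounds, draws on the decay of $r_-'$), to be the only mildly delicate points.
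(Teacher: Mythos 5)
The paper gives no argument to compare against here: it states the lemma and says ``whose simple proof is omitted,'' and your hypothesis-by-hypothesis transfer through the diffeomorphism $x\mapsto\widetilde x$ is exactly the verification the authors presumably have in mind. Your treatment of the well shape ($\V(0)=\V'(0)=0$, $\V''(0)=r_+''(0)/\phi(0)^2>0$), the monotonicity, and the identification $p_\pm=0$, $\V_\pm=1-\min r_+(\diamond)$ is correct and uniform in $\lambda$ for the reasons you give.

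Two points, however, are not actually closed. First, the mixed-partial hypothesis with $m=2$, $n=5$ is asserted rather than proved, and your stated justification --- that ``every factor that appears'' in the differentiated chain-rule expansion is jointly continuous --- is false as written: $\partial_{\widetilde\lambda}^2\partial_{\widetilde x}^5\V$ contains, e.g., the term $r_+^{(7)}(x)\,(\partial_\lambda x)^2\,\phi(x;\lambda)^{-5}$ (and terms with $r_-$-derivatives of order $6$ coming from $\phi$), whose coefficient does not vanish identically near $\widetilde x=0$, while the hypotheses of Theorem~\ref{thm:ZS-BS} give only $r_\pm\in C^5(\mathbb{R})$. You flag the ``order budget'' yourself but leave it unresolved; under the literal hypotheses it cannot be resolved by bookkeeping alone --- one must either invoke additional smoothness of $r_\pm$ near the minimizer or argue that only a restricted family of mixed partials is actually required downstream. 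Second, your far-field bound $\V''=O(|\widetilde x|^{-2})$ uses $r_-'(x)=O(|x|^{-1})$, which is not among the stated hypotheses of Theorem~\ref{thm:ZS-BS} (only decay of $r_+'$ and $r_+''$ is assumed); the paper itself later uses $R_+'=O(x^{-1})$, $R_+''=O(x^{-2})$ with $R_+=\lambda-r_-$, so this is arguably an implicit assumption of the paper, but in your write-up it should be declared as such rather than cited as given.
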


Therefore, to analyze the Langer transformation in both cases, it suffices to analyze $y=g(x;\lambda)$ as defined by \eqref{eq:general-Langer} and \eqref{eq:general-b} assuming the hypotheses of Theorem~\ref{thm:Schroedinger-BS}.  Going further, we can express $y=g(x;\lambda)$ explicitly in terms of the analytic function $\zeta:(-1,+\infty)\to (\zeta(-1),+\infty)$ and its analytic inverse $t:(\zeta(-1),+\infty)\to (-1,+\infty)$, both defined in Appendix~\ref{sec:zeta-t-properties}.  Indeed, for $x>x_+(\lambda)$, by the substitution $\tau=\bar{y}/(2\sqrt{-b})$, the integral on the left-hand side of \eqref{eq:general-Langer} becomes simply $-\frac{4}{3}b\zeta(y/(2\sqrt{-b}))^{3/2}$.  Inverting this using the inverse function $t(\diamond)$ to $\zeta(\diamond)$, we obtain an explicit formula for $y=g(x;\lambda)$; if $x>x_+(\lambda)$ this formula reads
\begin{equation}
    y=g(x;\lambda)=2\sqrt{-b}t\left(\left[-\frac{3}{4b}J(x;\lambda)\right]^{2/3}\right),\quad J(x;\lambda):=\int_{x_+(\lambda)}^x\sqrt{\V(\bar{x};\lambda)-\lambda}\,\dd\bar{x},\quad x>x_+(\lambda).
    \label{eq:g-x-large}
\end{equation}
This formula is especially useful for the analysis of $g(x;\lambda)$ for $x$ bounded away from the turning points converging to $0$ as $\lambda\downarrow 0$.

On the other hand, a better formula for the complementary case that $x$ is in a neighborhood of the turning points is obtained by writing the right-hand side of \eqref{eq:general-Langer} in a similar form as follows.  Firstly, according to the hypotheses of Theorem~\ref{thm:Schroedinger-BS} we may define a function $s=s(x)$ by
\begin{equation}
    s(x;\lambda):=\mathrm{sgn}(x)\sqrt{\V(x;\lambda)},\quad x\in\mathbb{R}.
\label{eq:s-of-x}
\end{equation}
\begin{lemma}[Smoothness of $x\mapsto s(x;\lambda)$ and its inverse]

Suppose that $\V$ satisfies the hypotheses of Theorem~\ref{thm:Schroedinger-BS}.
Then, the function $s(\diamond;\lambda)$ is of class $C^3(\mathbb{R})$, and for $m=0,1,2$ and $n=0,1,2,3$, $(x;\lambda)\mapsto \partial_\lambda^m\partial_x^n s(x;\lambda)$ is continuous on $(-\delta,\delta)\times [0,\lambda_\mathrm{max}]$.  Moreover, $s(0;\lambda)=0$ and $s(x;\lambda)$ is strictly increasing, hence there is an inverse function denoted $x=x(s;\lambda)$, which is also of class $C^{3}$ on its domain with $\partial_\lambda^m\partial_s^n x(s;\lambda)$ continuous on $(-\nu,\nu)\times [0,\lambda_\mathrm{max}]$ for some small $\nu>0$ and $m=0,1,2$, $n=0,1,2,3$.
\label{lem:x}
    \end{lemma}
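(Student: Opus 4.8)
The key point is that $s(\diamond;\lambda)$ is automatically smooth wherever $\V(\diamond;\lambda)>0$, so the plan is to concentrate all the work at the single point $x=0$, where $\V(\diamond;\lambda)$ has a nondegenerate double zero; there I would factor out this zero and use that the square root is $C^\infty$ away from the origin. For the easy part, away from $x=0$: since $\V(0;\lambda)=0$ and $\pm\V'(x;\lambda)>0$ for $\pm x>0$, we have $\V(x;\lambda)>0$ for $x\ne 0$, and on any closed interval disjoint from $0$ the map $w\mapsto\sqrt w$ is real-analytic on the compact range of $\V(\diamond;\lambda)$; hence $s(x;\lambda)=\mathrm{sgn}(x)\sqrt{\V(x;\lambda)}$ is $C^5$ in $x$ there, with $\partial_\lambda^m\partial_x^n s$ jointly continuous in $(x,\lambda)$ (for $m\le 2$, $n\le 5$) wherever the corresponding derivatives of $\V$ are. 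So only a neighborhood of $x=0$ needs attention.

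Near $x=0$ I would use Taylor's formula with integral remainder: from $\V(0;\lambda)=\V'(0;\lambda)=0$,
\[
\V(x;\lambda)=x^2W(x;\lambda),\qquad W(x;\lambda):=\int_0^1(1-\theta)\,\partial_x^2\V(\theta x;\lambda)\,\dd\theta,
\]
so $W(0;\lambda)=\tfrac12\V''(0;\lambda)>0$. Differentiating under the integral sign, $\partial_\lambda^m\partial_x^nW(x;\lambda)=\int_0^1(1-\theta)\theta^n\,\partial_\lambda^m\partial_x^{n+2}\V(\theta x;\lambda)\,\dd\theta$, which for $m\le 2$ and $n\le 3$ is continuous on $(-\delta,\delta)\times[0,\lambda_{\mathrm{max}}]$ precisely because $\partial_\lambda^m\partial_x^k\V$ is assumed continuous there for $k\le 5$; thus $W(\diamond;\lambda)\in C^3$ with the stated joint continuity. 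Because $W(0;\diamond)=\tfrac12\V''(0;\diamond)$ is continuous and positive on the compact interval $[0,\lambda_{\mathrm{max}}]$, after shrinking $\delta$ we may assume $W\ge c>0$ on $(-\delta,\delta)\times[0,\lambda_{\mathrm{max}}]$, and since $w\mapsto\sqrt w$ is $C^\infty$ on $[c,\infty)$, $\sqrt{W(\diamond;\lambda)}$ has exactly the regularity of $W(\diamond;\lambda)$. Checking both signs of $x$ gives $s(x;\lambda)=\mathrm{sgn}(x)\,|x|\,\sqrt{W(x;\lambda)}=x\sqrt{W(x;\lambda)}$ on $(-\delta,\delta)$, so $s(\diamond;\lambda)$ is $C^3$ in $x$ with $\partial_\lambda^m\partial_x^ns$ continuous for $m\le 2$, $n\le 3$; combined with the easy part this yields $s(\diamond;\lambda)\in C^3(\mathbb{R})$ and the joint-continuity assertion. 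Finally $s(0;\lambda)=0$, $s'(0;\lambda)=\sqrt{W(0;\lambda)}=\sqrt{\tfrac12\V''(0;\lambda)}>0$, and for $x\ne 0$ one has $s'(x;\lambda)=\mathrm{sgn}(x)\V'(x;\lambda)/(2\sqrt{\V(x;\lambda)})>0$ by the sign hypothesis on $\V'$; hence $s'(\diamond;\lambda)>0$ everywhere and $s(\diamond;\lambda)$ is strictly increasing.

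For the inverse: being strictly increasing, continuous and $C^3$, $s(\diamond;\lambda)$ maps $\mathbb{R}$ onto an open interval containing $0$ with a strictly increasing inverse $x=x(s;\lambda)$, and the one-dimensional inverse function theorem (for each fixed $\lambda$, using $s'\ne 0$) gives $x(\diamond;\lambda)\in C^3$ on its domain. For the joint continuity of $\partial_\lambda^m\partial_s^nx$ I would invoke the implicit function theorem for $F(x,s;\lambda):=s(x;\lambda)-s$ near $(0,0;\lambda)$, where $\partial_xF=s'(x;\lambda)\ne 0$: a short argument (take any convergent subsequence of $x(s_k;\lambda_k)$ and use injectivity of $s(\diamond;\lambda)$) shows $(s;\lambda)\mapsto x(s;\lambda)$ is jointly continuous near $(0;\lambda)$, and then a routine induction, differentiating the identity $s(x(s;\lambda);\lambda)=s$ repeatedly, expresses $\partial_\lambda^m\partial_s^nx$ (for $m\le 2$, $n\le 3$) as a universal rational expression — with denominator a positive power of $s'(x(s;\lambda);\lambda)$ — in the quantities $\partial_\lambda^i\partial_x^js(x(s;\lambda);\lambda)$ with $i\le m$, $j\le n$. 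Each of these is jointly continuous, being the composition of a jointly continuous derivative of $s$ with the jointly continuous map $x(s;\lambda)$, so the claim holds on $(-\nu,\nu)\times[0,\lambda_{\mathrm{max}}]$ as soon as $\nu>0$ is chosen small enough that $x((-\nu,\nu);\lambda)\subset(-\delta,\delta)$.

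The one nontrivial step is the regularity of $s$ at $x=0$: passing $\mathrm{sgn}(x)\sqrt{\,\cdot\,}$ through a nondegenerate double zero costs exactly two derivatives, and it is the factorization $\V=x^2W$ with $W(0;\lambda)>0$, together with the $C^\infty$-ness of $\sqrt{\,\cdot\,}$ on the positive half-line, that makes the loss exactly two (so $C^5\Rightarrow C^3$) while preserving the joint $(x,\lambda)$-regularity at the sharp order $m\le 2$, $n\le 3$. Once this is in place, everything concerning the inverse is routine bookkeeping with the implicit function theorem.
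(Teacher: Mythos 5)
Your proposal is correct and follows essentially the same route as the paper: you factor $\V(x;\lambda)=x^2 W(x;\lambda)$ via the integral form of the Taylor remainder (the paper's $j(x;\lambda)=\int_0^1\int_0^1 t\V''(tux;\lambda)\,\dd t\,\dd u$ is the same object), write $s=x\sqrt{W}$ with $W$ bounded below by a positive constant to get $C^3$ regularity and joint continuity, and obtain the inverse's regularity by repeatedly differentiating $s(x(s;\lambda);\lambda)=s$ using $s'>0$. The only difference is that you spell out the implicit-function-theorem bookkeeping that the paper leaves implicit.
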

    \begin{proof}
        Since $\V(x;\lambda)$ only vanishes at $x=0$, it is clear that $s(\diamond;\lambda)$ has $5$ continuous derivatives except possibly at $x=0$.  To analyze $s$ near $x=0$, we first write $\V$ in the form
        \begin{equation}
            \V(x;\lambda)=x^2j(x;\lambda),\quad 
            j(x;\lambda):=\int_0^1\int_0^1t\V''(tux;\lambda)\,\dd t\, \dd u,
            \label{eq:V-xsquared-j}
        \end{equation}
        where we see $j(\diamond;\lambda)\in C^3(\mathbb R)$ since $\V''(\diamond;\lambda)\in C^{3}(\mathbb R)$ by differentiation under the integral sign. Since $j(0;\lambda)=\frac{1}{2}\V''(0;\lambda)> 0$, in a neighborhood $(-\delta,\delta)$ of $x=0$, we may write 
        \begin{align}
            s(x;\lambda)=xk(x;\lambda),\quad k(x;\lambda):=\sqrt{j(x;\lambda)}
            \label{eq:s-x-k}
        \end{align}
        and $k(\diamond;\lambda)$ is $C^{3}((-\delta,\delta))$. It follows\footnote{        
        In fact, $s^{(4)}(0;\lambda)$ exists, since one may compute
        $s'''(x;\lambda)
        =k_1(x;\lambda)+xk'''(x;\lambda)$,  
        where $k_1(\diamond;\lambda)\in C^1(\mathbb R)$. Thus, we have 
        $s^{(4)}(0;\lambda)=k_1'(0;\lambda)+\lim_{x\to 0}k'''(x;\lambda)=k_1'(0;\lambda)+ k'''(0;\lambda)$.  However, continuity of $s^{(4)}(\diamond;\lambda)$ at $x=0$ does not follow from the hypotheses.} that $s(\diamond;\lambda)\in C^{3}((-\delta,\delta))$, so that $s(\diamond;\lambda)\in C^{3}(\mathbb R)$. Continuity of the mixed partials $\partial_\lambda^m\partial_x^ns(x;\lambda)$ near $(0;0)$ then follows from the corresponding property of $\V''(x;\lambda)$ and the formul\ae\ \eqref{eq:V-xsquared-j}--\eqref{eq:s-x-k}.  The corresponding properties of the inverse function $x(s;\lambda)$ follow by (repeated) differentiation of the identity $s(x(s;\lambda);\lambda)=s$ with respect to $s$ and $\lambda$ and the use of $s'(x;\lambda)>0$. 
    \end{proof}
Then using the inverse function as a substitution, for $\lambda>0$ and $x>x_+(\lambda)$ we have
\begin{equation}
    \int_{x_+(\lambda)}^x\sqrt{\V(\bar{x};\lambda)-\lambda}\,\dd\bar{x} = \int_{\sqrt{\lambda}}^{s(x;\lambda)}\sqrt{\bar{s}^2-\lambda}\,x'(\bar{s};\lambda)\,\dd \bar{s}=\lambda\int_1^{s(x;\lambda)/\sqrt{\lambda}}\sqrt{\tau^2-1}\,x'(\sqrt{\lambda}\tau;\lambda)\,\dd\tau.
\end{equation}
But under the further substitution $w=\zeta(\tau)$ we have $w^{1/2}\dd w = \sqrt{\tau^2-1}\,\dd\tau$ for $w>0$ and $\tau>1$ according to \eqref{eq:t-zeta-ODE} from Appendix~\ref{sec:zeta-t-properties}, so 
\begin{equation}
    \int_{x_+(\lambda)}^x\sqrt{\V(\bar{x};\lambda)-\lambda}\,\dd\bar{x} =\lambda\int_0^{\zeta(s(x;\lambda)/\sqrt{\lambda})} w^{1/2}x'(\sqrt{\lambda} t(w);\lambda)\,\dd w.
\end{equation}
Finally, we make one more change of integration variable by $w=\zeta(s(x;\lambda)/\sqrt{\lambda})q^{2/3}$ to arrive at
\begin{equation}
    \int_{x_-(\lambda)}^x\sqrt{\V(\bar{x};\lambda)-\lambda}\,\dd\bar{x}=\frac{2}{3}\lambda \zeta\left(\frac{s(x;\lambda)}{\sqrt{\lambda}}\right)^{3/2}\int_0^1x'\left(\sqrt{\lambda}t\left(\zeta\left(\frac{s(x;\lambda)}{\sqrt{\lambda}}\right)q^{2/3}\right);\lambda\right)\,\dd q,\quad x>x_+(\lambda),\; \lambda>0.
\end{equation}
Therefore multiplying \eqref{eq:general-Langer} by $-3/(4b)$, raising both sides to the $2/3$ power, and applying $t(\diamond)$ to invert $\zeta(\diamond)$ yields the explicit form
\begin{equation}
    y=g(x;\lambda)=2\sqrt{-b}t\left(\zeta\left(\frac{s(x;\lambda)}{\sqrt{\lambda}}\right)\left[\frac{\lambda}{-2b}\int_0^1x'\left(\sqrt{\lambda}t\left(\zeta\left(\frac{s(x;\lambda)}{\sqrt{\lambda}}\right)q^{2/3}\right);\lambda\right)\,\dd q\right]^{2/3}\right).
\label{eq:Langer-explicit}
\end{equation}
Note that since $s\mapsto x(s;\lambda)$ is monotone increasing and $\lambda/b<0$, the $2/3$ power is here applied to a strictly positive quantity.
Although we assumed that $x>x_+(\lambda)$, one can verify using the analytic continuation of $\zeta$ to $-1<t<1$ given explicitly by \eqref{eq:zeta-less-than-1} in Appendix~\ref{sec:zeta-t-properties}
that \eqref{eq:Langer-explicit} in fact holds for all $x>x_-(\lambda)$.  To simplify the notation further, we first introduce an ``inner variable'' by 
\begin{equation}
    \xi:=\zeta\left(\frac{s(x;\lambda)}{\sqrt{\lambda}}\right)\implies \frac{\dd\xi}{\dd x}=\zeta'\left(\frac{s(x;\lambda)}{\sqrt{\lambda}}\right)\frac{s'(x;\lambda)}{\sqrt{\lambda}} = \zeta'(t(\xi))\frac{s'(x;\lambda)}{\sqrt{\lambda}} = \frac{s'(x;\lambda)}{\sqrt{\lambda}t'(\xi)},
\label{eq:inner-variable}
\end{equation}
where we used the identity $\zeta(t(\xi))=\xi$.
Thus, $\xi=O(1)$ as $\lambda\downarrow 0$ means that $x=O(\sqrt{\lambda})$, so the inner variable $\xi$ blows up the interval $x_-(\lambda)<x<x_+(\lambda)$ to fixed size no matter how proximal the turning points are.  Let us also introduce a constant parameter by
\begin{equation}
    \phi(\lambda):=\frac{\lambda}{-b}>0,\quad b=b(\lambda).
\label{eq:phi-define}
\end{equation}
Finally, we introduce a quantity $I(\xi;\sigma)$ by
\begin{equation}
    I(\xi;\sigma):=\frac{1}{2}\phi(\sigma^2)\int_0^1 x'\left(\sigma t(\xi q^{2/3});\sigma^2\right)\,\dd q.
\label{eq:I-define}
\end{equation}
In terms of this notation, we may write \eqref{eq:Langer-explicit} in the form
\begin{equation}
    g(x;\lambda)=\frac{2\lambda^{1/2}}{\phi(\lambda)^{1/2}}t(\xi I(\xi;\lambda^{1/2})^{2/3}),\quad x>x_-(\lambda).
    \label{eq:g-compact}
\end{equation}
We can get an equally-compact expression for the derivative $g'(x;\lambda)$.  Indeed, differentiating $I(\xi;\sigma)$ directly gives
\begin{equation}
    I'(\xi;\sigma)=\frac{1}{2}\sigma\phi(\sigma^2)\int_0^1 x''\left(\sigma t(\xi q^{2/3});\sigma^2\right) t'(\xi q^{2/3})q^{2/3}\,\dd q.
    \label{eq:Iprime-direct}
\end{equation}
However, we also see by direct computation that for $\xi$ fixed,
\begin{equation}
    \frac{\dd}{\dd q}x'\left(\sigma t(\xi q^{2/3});\sigma^2\right)=x''\left(\sigma t(\xi q^{2/3});\sigma^2\right)\sigma t'(\xi q^{2/3})\frac{2}{3}\xi q^{-1/3}.
\end{equation}
Combining these results gives
\begin{equation}
    I'(\xi;\sigma)=\frac{1}{2}\phi(\sigma^2)\frac{3}{2\xi}\int_0^1q\frac{\dd}{\dd q}x'\left(\sigma t(\xi q^{2/3});\sigma^2\right)\,\dd q.
\end{equation}
Hence, integrating by parts,
\begin{equation}
    I'(\xi;\sigma)=\frac{1}{2}\phi(\sigma^2)\frac{3}{2\xi}\left[x'\left(\sigma t(\xi);\sigma^2\right)-
    \int_0^1 x'\left(\sigma t(\xi q^{2/3});\sigma^2\right)\,\dd q\right].
\end{equation}
Therefore, setting $\sigma=\lambda^{1/2}$,
\begin{equation}
    I'(\xi;\lambda^{1/2}) = \frac{3}{4\xi}\phi(\lambda)x'\left(\lambda^{1/2} t(\xi);\lambda\right)-\frac{3}{2\xi}I(\xi;\lambda^{1/2}).
\end{equation}
Going further, \eqref{eq:inner-variable} implies $\lambda^{1/2}t(\xi)=s(x;\lambda)$ since $t$ is the inverse function to $\zeta$. This shows that for any differentiable function $F(\diamond)$,
\begin{equation}
    \frac{\dd}{\dd \xi}F(\xi I(\xi;\lambda^{1/2})^{2/3}) = F'(\xi I(\xi;\lambda^{1/2})^{2/3})\cdot\frac{1}{2}\phi(\lambda)x'(s(x;\lambda);\lambda)I(\xi;\lambda^{1/2})^{-1/3}.
    \label{eq:general-identity}
\end{equation}
Applying this in the case $F(\diamond)=t(\diamond)$ to \eqref{eq:g-compact}, using the chain rule in the form $\dd \xi/\dd x = s'(x;\lambda)/(\lambda^{1/2}t'(\xi))$, and observing that $s'(x;\lambda)x'(s(x;\lambda);\lambda)=1$, we obtain 
\begin{equation}
    g'(x;\lambda)=\frac{\phi(\lambda)^{1/2}}{I(\xi;\lambda^{1/2})^{1/3}}\frac{t'(\xi I(\xi;\lambda^{1/2})^{2/3})}{t'(\xi)}.
    \label{eq:gprime-simple}
\end{equation}

The formul\ae\ \eqref{eq:g-compact} and \eqref{eq:gprime-simple} are especially useful for analyzing the Langer transformation for $x_-(\lambda)<x<\delta$ for some fixed $\delta>0$ when $\lambda$ is small.  However, another advantage 
the formul\ae\ \eqref{eq:g-compact} and \eqref{eq:gprime-simple} enjoy over \eqref{eq:g-x-large} is that they hide completely the turning points $x=x_\pm(\lambda)$, which otherwise can complicate the analysis.  For instance, we have the following result.
\begin{lemma}[Smoothness of the Langer transformation at fixed $\lambda$]
    Suppose that $\V$ satisfies the hypotheses of Theorem~\ref{thm:Schroedinger-BS}.  Then for each $\lambda>0$, the Langer transformation $g(\diamond;\lambda)$ is of class $C^3((x_-(\lambda),\infty))$ with range $(-2\sqrt{-b},\infty)$.
\end{lemma}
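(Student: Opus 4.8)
The plan is to read off both the regularity and the range of $g(\diamond;\lambda)$ from the explicit representations \eqref{eq:g-compact} of $g$ and \eqref{eq:gprime-simple} of $g'$, in terms of the inner variable $\xi=\zeta(s(x;\lambda)/\sqrt{\lambda})$ and the quantity $I(\xi;\lambda^{1/2})$ from \eqref{eq:I-define}.  Since these formul\ae\ hide the turning points, the only real work is (i) pinning down exactly how smooth $I(\diamond;\lambda^{1/2})$ is, and (ii) noting that it is \eqref{eq:gprime-simple}, not \eqref{eq:g-compact} directly, that yields the full $C^3$ conclusion.

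First I would record the regularity of the ingredients on $x>x_-(\lambda)$.  By Lemma~\ref{lem:x}, $s(\diamond;\lambda)\in C^3(\mathbb{R})$ and $x(\diamond;\lambda)\in C^3$, and $s(x;\lambda)/\sqrt{\lambda}>-1$ for every $x>x_-(\lambda)$ (immediate for $x\ge 0$; for $x_-(\lambda)<x<0$ it is the statement $\V(x;\lambda)<\lambda$).  Since $\zeta$ is analytic with $\zeta'>0$ on $(-1,\infty)$ and its inverse $t$ is analytic with $t'>0$ on $(\zeta(-1),\infty)$ (Appendix~\ref{sec:zeta-t-properties}), the map $x\mapsto\xi$ is $C^3$ on $(x_-(\lambda),\infty)$ with values in $(\zeta(-1),\infty)$.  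Next, for $I(\diamond;\lambda^{1/2})$ I would differentiate under the integral sign in \eqref{eq:I-define}: the integrand is $x'(\lambda^{1/2}t(\xi q^{2/3});\lambda)$, and each differentiation in $\xi$ brings down exactly one further derivative of $x(\diamond;\lambda)$ times factors ($q^{2/3}$ and derivatives of the analytic function $t$) that are continuous and bounded for $q\in[0,1]$, with the argument $\lambda^{1/2}t(\xi q^{2/3})$ confined to a bounded $s$-interval on which $x(\diamond;\lambda)\in C^3$.  Hence the integrand and its first two $\xi$-derivatives are jointly continuous, so $I(\diamond;\lambda^{1/2})\in C^2$ on the relevant range of $\xi$, with $I>0$ since $x'(\diamond;\lambda)>0$.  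One further fact is needed: $\xi\,I(\xi;\lambda^{1/2})^{2/3}$ stays inside $(\zeta(-1),\infty)$, the domain of $t$.  This follows because the implicit construction \eqref{eq:LangerImplicitRight}--\eqref{eq:LangerImplicitLeft} exhibits $g(\diamond;\lambda)$ as a continuous function on $\mathbb{R}$ with $g(x_\pm(\lambda);\lambda)=\pm 2\sqrt{-b}$ and with $g(x;\lambda)>-2\sqrt{-b}$ for all $x>x_-(\lambda)$, which combined with $g(x;\lambda)=2\sqrt{-b}\,t(\xi\,I(\xi;\lambda^{1/2})^{2/3})$ from \eqref{eq:g-compact} and the monotonicity of $t$ gives $\xi\,I(\xi;\lambda^{1/2})^{2/3}>\zeta(-1)$.

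Assembling these in \eqref{eq:gprime-simple}, as functions of $x\in(x_-(\lambda),\infty)$ we have $\xi\in C^3$; $I(\xi;\lambda^{1/2})\in C^2$ and positive, so $I^{-1/3}$ and $I^{2/3}$ are $C^2$; $\xi\,I^{2/3}\in C^2$ and lies in the domain of $t$, so $t'(\xi\,I^{2/3})\in C^2$; and $1/t'(\xi)=\zeta'(t(\xi))\in C^3$ and is nonvanishing.  The product of these factors with the constant $\phi(\lambda)^{1/2}$ is in $C^2$, and every factor is strictly positive, so $g'(\diamond;\lambda)\in C^2((x_-(\lambda),\infty))$ with $g'(\diamond;\lambda)>0$; in particular $g(\diamond;\lambda)\in C^3((x_-(\lambda),\infty))$ and is strictly increasing.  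For the range, the continuity of $g(\diamond;\lambda)$ at $x_-(\lambda)$ with value $-2\sqrt{-b}$ (from the construction), together with strict monotonicity and the limit $g(x;\lambda)\to\infty$ as $x\to\infty$ — which follows from \eqref{eq:g-x-large} since $J(x;\lambda)\to\infty$ under the growth hypotheses on $\V$ while $t(w)\to\infty$ as $w\to\infty$ — shows the range of $g(\diamond;\lambda)$ on the open interval $(x_-(\lambda),\infty)$ is exactly $(-2\sqrt{-b},\infty)$.

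The step I expect to require the most care is the regularity count for $I(\diamond;\lambda^{1/2})$: the argument depends on the representation \eqref{eq:I-define} losing precisely one derivative per $\xi$-differentiation, so that the $C^3$-regularity of $x(\diamond;\lambda)$ — which is all the $C^5$-regularity of $\V$ delivers, because of the square root at the bottom of the well — forces $I(\diamond;\lambda^{1/2})\in C^2$, exactly what \eqref{eq:gprime-simple} needs for the $C^3$ conclusion and no more.  The subsidiary point that $\xi\,I^{2/3}$ never leaves the domain of $t$ is routine once the a priori bound $g>-2\sqrt{-b}$ from the construction is in hand, and no difficulty arises at the turning point $x=x_+(\lambda)$, where $\xi=0$, because every factor in \eqref{eq:gprime-simple} is finite and nonzero there.
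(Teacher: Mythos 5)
Your proposal is correct and follows essentially the same route as the paper: both read $C^3$ regularity of $g(\diamond;\lambda)$ off the representation \eqref{eq:gprime-simple}, using $s(\diamond;\lambda)\in C^3$ from Lemma~\ref{lem:x}, analyticity of $\zeta$ and $t$, and $I(\diamond;\lambda^{1/2})\in C^2$ because $x'(\diamond;\lambda)\in C^2$, and both get the range from $g(x_-(\lambda);\lambda)=-2\sqrt{-b}$ together with unbounded growth of the action integral as $x\to+\infty$. Your extra checks (positivity of $I$, that $\xi I^{2/3}$ stays in the domain of $t$) are details the paper leaves implicit, not a different method.
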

\begin{proof}
First, we note that since $\V(\diamond;\lambda)$ is increasing away from the minimizer $x=0$, $\V(x;\lambda)-\lambda$ is bounded below as $x\to +\infty$, so the integral on the right-hand side of \eqref{eq:general-Langer} grows at least linearly in $x$ in this limit; hence also $y=g(x;\lambda)$ grows without bound; together with the condition $g(x_-(\lambda);\lambda)=-2\sqrt{-b}$, this shows that $g(\diamond;\lambda)$ maps $(x_-(\lambda),+\infty))$ to $(-2\sqrt{-b},\infty)$.
    Consider now $g'(\diamond;\lambda)$ in the form \eqref{eq:gprime-simple}. 
    Since by Lemma~\ref{lem:x}, $s(\diamond;\lambda)$ is of class $C^3(\mathbb{R})$ while $\zeta:(-1,\infty)\to (\zeta(-1),\infty)$ is analytic, it follows that $x\mapsto \xi$ is of class $C^3((x_-(\lambda),\infty))$.  As $t:(\zeta(-1),\infty)\to (-1,\infty)$ is analytic and $I(\xi;\lambda^{1/2})>0$, to prove that $g'(\diamond;\lambda)\in C^2$ it therefore remains to observe that $\xi\mapsto I(\xi;\lambda^{1/2})$ is of class $C^2((\zeta(-1),\infty))$ which follows because $x'(\diamond;\lambda)$ is of class $C^2$.  Hence also $g(\diamond;\lambda)\in C^3((x_-(\lambda),+\infty))$.
\end{proof}

\section{Derivatives of the Langer transformation for small $\lambda>0$}
\label{sec:derivatives}
First, we analyze the function $\phi$ defined by \eqref{eq:phi-define} for small $\lambda>0$.  
\begin{lemma}[Differentiability of $\lambda\mapsto\phi(\lambda)$]
Suppose that $\V$ satisfies the hypotheses of Theorem~\ref{thm:Schroedinger-BS}.  Then for some $\delta>0$, $\lambda\mapsto \phi(\lambda)$ is of class $C^1([0,\delta))$ and $\phi(\lambda)=\sqrt{2\V''(0;0)}+O(\lambda)$ as $\lambda\downarrow 0$.
    \label{lem:b}
\end{lemma}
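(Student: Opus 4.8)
The plan is to express $\phi(\lambda)$ as $\pi$ divided by a single integral that depends smoothly on $\lambda$ once an apparent $\lambda^{-1/2}$ singularity has been cancelled by a parity argument.

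First I would rewrite $-b(\lambda)$ from \eqref{eq:general-b} using the substitution $x=x(s;\lambda)$ furnished by Lemma~\ref{lem:x}. Along this substitution $\V(x;\lambda)=s^2$, and the turning points correspond to $s=s(x_\pm(\lambda);\lambda)=\pm\sqrt\lambda$, so $-b(\lambda)=\pi^{-1}\int_{-\sqrt\lambda}^{\sqrt\lambda}\sqrt{\lambda-s^2}\,x'(s;\lambda)\,\dd s$, where $x'(s;\lambda):=\partial_s x(s;\lambda)>0$; rescaling by $s=\sqrt\lambda\,\tau$ gives $-b(\lambda)=(\lambda/\pi)D(\lambda)$ with
\[
D(\lambda):=\int_{-1}^{1}\sqrt{1-\tau^2}\,x'(\sqrt\lambda\,\tau;\lambda)\,\dd\tau,
\]
and hence $\phi(\lambda)=\lambda/(-b(\lambda))=\pi/D(\lambda)$ for $\lambda>0$. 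By Lemma~\ref{lem:x} the integrand is jointly continuous and bounded near $(s;\lambda)=(0;0)$, so $D$ is continuous on $[0,\delta)$ with $D(0)=x'(0;0)\int_{-1}^{1}\sqrt{1-\tau^2}\,\dd\tau=\tfrac{\pi}{2}\sqrt{2/\V''(0;0)}$, where I use $x'(0;\lambda)=1/s'(0;\lambda)=1/\sqrt{\tfrac12\V''(0;\lambda)}$, a consequence of the representation $s(x;\lambda)=x\sqrt{j(x;\lambda)}$ with $j(0;\lambda)=\tfrac12\V''(0;\lambda)$ from the proof of Lemma~\ref{lem:x}. In particular $D(0)>0$, so once $D\in C^1([0,\delta))$ is established, shrinking $\delta$ so that $D>0$ there gives $\phi=\pi/D\in C^1([0,\delta))$ with $\phi(0)=\pi/D(0)=\sqrt{2\V''(0;0)}$, and then $\phi(\lambda)=\phi(0)+O(\lambda)$ because $\phi'$ is continuous, hence bounded, near $\lambda=0$.

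The heart of the matter is thus showing $D\in C^1$ at $\lambda=0$. For $\lambda>0$ one may differentiate under the integral sign to get
\[
D'(\lambda)=\frac{1}{2\sqrt\lambda}\int_{-1}^{1}\sqrt{1-\tau^2}\,\tau\,x''(\sqrt\lambda\,\tau;\lambda)\,\dd\tau+\int_{-1}^{1}\sqrt{1-\tau^2}\,\partial_\lambda x'(\sqrt\lambda\,\tau;\lambda)\,\dd\tau,\qquad x''(s;\lambda):=\partial_s^2 x(s;\lambda).
\]
The second term extends continuously to $\lambda=0$ by Lemma~\ref{lem:x}. The first term carries the $\lambda^{-1/2}$, and handling it is the hard part. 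The fix is parity: the weight $\tau\mapsto\sqrt{1-\tau^2}\,\tau$ is odd, so only the odd-in-$s$ part $o(s;\lambda):=\tfrac12(x''(s;\lambda)-x''(-s;\lambda))$ of $x''$ contributes. Since $x(\diamond;\lambda)\in C^3$ by Lemma~\ref{lem:x}, $o(\diamond;\lambda)\in C^1$ with $o(0;\lambda)=0$, hence $o(s;\lambda)=s\int_0^1 e(rs;\lambda)\,\dd r$ with $e(s;\lambda):=\tfrac12(\partial_s^3 x(s;\lambda)+\partial_s^3 x(-s;\lambda))$ continuous; substituting $s=\sqrt\lambda\,\tau$ produces a compensating factor $\sqrt\lambda\,\tau$ that cancels the $1/\sqrt\lambda$, leaving
\[
D'(\lambda)=\frac12\int_{-1}^{1}\!\int_0^1\sqrt{1-\tau^2}\,\tau^2\,e(r\sqrt\lambda\,\tau;\lambda)\,\dd r\,\dd\tau+\int_{-1}^{1}\sqrt{1-\tau^2}\,\partial_\lambda x'(\sqrt\lambda\,\tau;\lambda)\,\dd\tau,
\]
now manifestly continuous on $[0,\delta)$ by dominated convergence. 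Together with the continuity of $D$ from the previous step, this gives $D\in C^1([0,\delta))$ and completes the proof.

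I expect the only genuinely delicate point to be this parity cancellation of the $\lambda^{-1/2}$ term; the rest (the change of variables near the turning points, checking $\sqrt\lambda<\nu$ so that Lemma~\ref{lem:x} applies, and the dominated-convergence continuity statements) is routine. I also note that one cannot simply iterate the expansion to pull out a smooth factor $s^2$: the hypothesis $\V\in C^5$ yields only $x(\diamond;\lambda)\in C^3$, so the expansion must stop at the merely continuous function $e$, which is precisely why the argument is organized around differentiating $D$ once and then symmetrizing, rather than Taylor-expanding the integrand in $s$ to higher order.
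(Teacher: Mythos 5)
Your proposal is correct and takes essentially the same route as the paper: the same substitution $x=x(s;\lambda)$ with rescaling $s=\sqrt{\lambda}\,\tau$, the same parity cancellation of the $\lambda^{-1/2}$ term (your odd part $o$ of $x''$ and its derivative $e$ are exactly the paper's $p'$ and $p''$ for the symmetrization $p(s;\lambda)=\tfrac12(x'(s;\lambda)+x'(-s;\lambda))$), and the same evaluation $\phi(0)=\sqrt{2\V''(0;0)}$ via $x'(0;0)=\sqrt{2/\V''(0;0)}$. The only minor deviation is that you obtain differentiability at $\lambda=0$ from the continuous extension of $D'$ via the mean-value-theorem criterion, whereas the paper verifies the right derivative at $\lambda=0$ directly from the difference quotient; both are valid.
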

\begin{proof}
    With the substitution $x=x(\lambda^{1/2}w;\lambda)$ in the expression \eqref{eq:b-of-lambda} for $b$, we get
    \begin{equation}
        \frac{1}{\phi(\lambda)}=-\frac{b}{\lambda}=\frac{1}{\pi}\int_{-1}^1\sqrt{1-w^2}\,x'(\lambda^{1/2}w;\lambda)\,\dd w = \frac{1}{\pi}\int_{-1}^1\sqrt{1-w^2}\,p(\lambda^{1/2}w;\lambda)\,\dd w,
        \label{eq:phi-reciprocal}
    \end{equation}
    where $p(s;\lambda):=\frac{1}{2}(x'(s;\lambda)+x'(-s;\lambda))$ is an even function of $s$ for each $\lambda\ge 0$ for which Lemma~\ref{lem:x} yields that $p''(s;\lambda)$ and $p_\lambda(s;\lambda)$ are both continuous on $(-\nu,\nu)\times [0,\lambda_\mathrm{max}]$.  
    For $\lambda>0$, by differentiation under the integral sign, and the use of $p'(s;\lambda)=s\int_0^1p''(s u;\lambda)\,\dd u$
    \begin{equation}
        \frac{\dd}{\dd\lambda}\frac{1}{\phi(\lambda)}=\frac{1}{2\pi}\int_{-1}^1\sqrt{1-w^2}\,w^2\int_0^1 p''(\lambda^{1/2}w u;\lambda)\,\dd u\,\dd w+\frac{1}{\pi}\int_{-1}^1\sqrt{1-w^2}\, p_\lambda(\lambda^{1/2}w;\lambda)\,\dd w,
    \end{equation}
    which is obviously continuous for $\lambda>0$.  Taking the limit $\lambda\downarrow 0$ using uniform convergence of the integrands to take the limit under the integral sign gives
    \begin{equation}
        \lim_{\lambda\downarrow 0}\frac{\dd}{\dd\lambda}\frac{1}{\phi(\lambda)} = \frac{p''(0;0)}{2\pi}\int_{-1}^1\sqrt{1-w^2}\,w^2\,\dd w+p_\lambda(0;0).  
        \label{eq:deriv-phi-reciprocal-limit}
        \end{equation}
    On the other hand, the (right) derivative at $\lambda=0$ is by definition
    \begin{equation}
        \left.\frac{\dd}{\dd\lambda}\frac{1}{\phi(\lambda)}\right|_{\lambda=0}=\lim_{\lambda\downarrow 0}\frac{1}{\pi\lambda}\int_{-1}^1\sqrt{1-w^2}\left(p(\lambda^{1/2}w;\lambda)-p(0;0)\right)\,\dd w.
        \label{eq:phi-inv-deriv-at-0}
    \end{equation}
    Writing
    \begin{equation}
    \begin{split}
        p(\lambda^{1/2}w;\lambda)-p(0;0)&=\left[p(\lambda^{1/2}w;\lambda)-p(0;\lambda)\right]+\left[p(0;\lambda)-p(0;0)\right]\\
        &=\lambda^{1/2}w\int_0^1p'(\lambda^{1/2}w u;\lambda)\,\dd u +\lambda\int_0^1 p_\lambda(0;\lambda u)\,\dd u\\
        &=\lambda w^2\int_0^1 u\int_0^1 p''(\lambda^{1/2}w u v;\lambda)\,\dd v\,\dd u +\lambda\int_0^1 p_\lambda(0;\lambda u)\,\dd u,
        \end{split}
    \end{equation}
    we take the limit in \eqref{eq:phi-inv-deriv-at-0} under the integral sign and obtain the same result as  
    \eqref{eq:deriv-phi-reciprocal-limit}.
    Hence $\lambda\mapsto 1/\phi(\lambda)$ is $C^1([0,\delta))$.  Setting $\lambda=0$ in \eqref{eq:phi-reciprocal} gives $1/\phi(0)=\frac{1}{2}x'(0;0)$.  By implicit differentiation of $s^2=\V(x(s;0);0)$ we obtain $x'(0;0)=\sqrt{2/\V''(0;0)}>0$, so it follows that also $\phi\in C^1([0,\delta))$ and $\phi(\lambda)=\sqrt{2\V''(0;0)}+O(\lambda)$.
\end{proof}

\subsection{Estimates valid for $x\ge\delta$ as $\lambda\downarrow 0$}
\label{sec:OuterEstimates}
We refer to the limit $\lambda\downarrow 0$ for $x\ge\delta$ as \emph{outer asymptotics}.
\begin{lemma}[Outer asymptotics of $J$]
    Suppose that $\V(x;\lambda)$ satisfies the hypotheses of Theorem~\ref{thm:Schroedinger-BS}, and fix $\delta>0$.  Then in the limit $\lambda\downarrow 0$, the quantity $J(x;\lambda)$ defined in \eqref{eq:g-x-large} satisfies
    \begin{equation}
J(x;\lambda)=\left(\int_0^x\sqrt{\V(\bar{x};\lambda)}\,\dd\bar{x}\right)(1+ O(\lambda\ln(\lambda^{-1})) ),
\label{eq:J-expand}
    \end{equation}
    uniformly for $x\ge\delta$.
\label{lem:J}
\end{lemma}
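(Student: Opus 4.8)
The plan is to compare $J(x;\lambda)$ directly with $K(x;\lambda):=\int_0^x\sqrt{\V(\bar x;\lambda)}\,\dd\bar x$ and to show that $J-K$ is $O(\lambda\ln\lambda^{-1})$ times $K$, uniformly for $x\ge\delta$.  Since $\V(\diamond;\lambda)$ is strictly increasing on $(0,\infty)$ with $\V(x_+(\lambda);\lambda)=\lambda$, we have $\V(\bar x;\lambda)>\lambda$ on $(x_+(\lambda),x]$, so from $\sqrt a-\sqrt{a+\lambda}=-\lambda(\sqrt a+\sqrt{a+\lambda})^{-1}$ (with $a=\V(\bar x;\lambda)-\lambda$) the difference can be written as the proper integral
\begin{equation*}
J(x;\lambda)-K(x;\lambda)=-\int_0^{x_+(\lambda)}\sqrt{\V(\bar x;\lambda)}\,\dd\bar x\;-\;\int_{x_+(\lambda)}^x\frac{\lambda\,\dd\bar x}{\sqrt{\V(\bar x;\lambda)-\lambda}+\sqrt{\V(\bar x;\lambda)}}.
\end{equation*}
This isolates a boundary term and a ``near-diagonal'' integral whose only delicate feature is the mild singularity of $1/\sqrt{\V}$ at the origin.

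First I would record the elementary facts near the bottom of the well.  From the representation $\V(\bar x;\lambda)=\bar x^2 j(\bar x;\lambda)$ with $j(0;\lambda)=\tfrac12\V''(0;\lambda)$ in \eqref{eq:V-xsquared-j} together with the joint continuity hypotheses of Theorem~\ref{thm:Schroedinger-BS} (which make $j(\diamond;\diamond)$ continuous near $(0,0)$), there are constants $0<c\le C$ and $\delta_0\in(0,\delta]$ with $c^2\bar x^2\le\V(\bar x;\lambda)\le C^2\bar x^2$ for $|\bar x|\le\delta_0$ and all small $\lambda\ge 0$; in particular $\V(x_+(\lambda);\lambda)=\lambda$ forces $x_+(\lambda)\asymp\sqrt\lambda$ and $x_+(\lambda)\downarrow 0$.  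Hence the boundary term is $O(x_+(\lambda)^2)=O(\lambda)$.  For the integral I would split $[x_+(\lambda),x]=[x_+(\lambda),\delta_0]\cup[\delta_0,x]$, legitimate once $\lambda$ is small enough that $x_+(\lambda)<\delta_0$.  On $[x_+(\lambda),\delta_0]$ I bound the integrand by $\lambda/\sqrt{\V(\bar x;\lambda)}\le \lambda/(c\bar x)$, so that piece is at most $(\lambda/c)\ln(\delta_0/x_+(\lambda))=O(\lambda\ln\lambda^{-1})$ --- this is where the logarithm is genuinely produced.  On $[\delta_0,x]$ monotonicity and continuity give a uniform lower bound $\V(\bar x;\lambda)\ge\V(\delta_0;\lambda)\ge c_0>0$ for small $\lambda$, whence the integrand is $\le (\text{const})\,\lambda\sqrt{\V(\bar x;\lambda)}$ pointwise, so that piece is $O(\lambda)\int_{\delta_0}^x\sqrt{\V}\le O(\lambda)\,K(x;\lambda)$.

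Finally I would assemble the pieces: $|J(x;\lambda)-K(x;\lambda)|\le C_1\lambda\ln\lambda^{-1}+C_2\lambda\,K(x;\lambda)$ with $C_1,C_2$ independent of $x\ge\delta$, and divide by $K(x;\lambda)$.  Using that $x\mapsto K(x;\lambda)$ is increasing and $K(\delta;\lambda)\ge\int_0^{\delta_0}\sqrt{\V}\ge\tfrac{c}{2}\delta_0^2>0$ uniformly in small $\lambda$ (again from $\V\ge c^2\bar x^2$ near $0$), one gets $|J-K|/K=O(\lambda\ln\lambda^{-1})$ uniformly for $x\ge\delta$, which is \eqref{eq:J-expand}.

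The main --- essentially the only non-routine --- point is that the estimate must be uniform in $x$ all the way down to $\lambda\downarrow 0$: the absolute discrepancy $|J-K|$ is not uniformly $O(\lambda\ln\lambda^{-1})$ (the $[\delta_0,x]$ piece grows with $x$), so one must track the \emph{relative} error and exploit the uniform positive lower bound of $K(x;\lambda)$ for $x\ge\delta$.  A secondary care point is that the logarithmic loss is intrinsic to this crude comparison, coming from the honest near-singularity $1/\sqrt{\V}\asymp 1/\bar x$ integrated over $[x_+(\lambda),\delta_0]$ with $x_+(\lambda)\asymp\sqrt\lambda$, and that $\delta_0$ must be chosen inside the region where the joint continuity hypotheses hold so that $j(\diamond;\diamond)$ is bounded above and below there uniformly in small $\lambda$.
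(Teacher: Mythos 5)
Your proposal is correct and follows essentially the same route as the paper: the same decomposition of $J-K$ into the boundary piece $\int_0^{x_+(\lambda)}\sqrt{\V}\,\dd\bar{x}=O(\lambda)$ plus the difference integral, with the logarithm produced by $\int_{x_+(\lambda)}^{\delta}\dd\bar{x}/\sqrt{\V}\lesssim\ln(\lambda^{-1})$ and the far piece absorbed into $O(\lambda)\int\sqrt{\V}$ via $1/\sqrt{\V}\lesssim\sqrt{\V}$ where $\V$ is bounded below, then converted to a relative error using the uniform positive lower bound on $\int_0^x\sqrt{\V}$ for $x\ge\delta$. The only (immaterial) difference is your splitting point $\delta_0\le\delta$ and the slightly more explicit justification of $x_+(\lambda)\asymp\sqrt{\lambda}$ and the lower bound on $K$, which the paper treats more tersely.
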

\begin{proof}
    Note that for $\lambda>0$ sufficiently small, we have $x_+(\lambda)<\delta$.  We calculate
    \begin{equation}
    \begin{split}
        J(x;\lambda)-\int_0^x\sqrt{\V(\bar{x};\lambda)}\,\dd\bar{x} &= \int_{x_+(\lambda)}^x\left[\sqrt{\V(\bar{x};\lambda)-\lambda}-\sqrt{\V(\bar{x};\lambda)}\right]\,\dd \bar{x} - \int_0^{x_+(\lambda)}\sqrt{\V(\bar{x};\lambda)}\,\dd\bar{x}\\
        &=-\lambda\int_{x_+(\lambda)}^x\frac{\dd\bar{x}}{\sqrt{\V(\bar{x};\lambda)-\lambda}+\sqrt{\V(\bar{x};\lambda)}} -\int_0^{x_+(\lambda)}\sqrt{\V(\bar{x};\lambda)}\,\dd\bar{x}.
    \end{split}
    \end{equation}
    The hypotheses on $\V$ imply that $x_+(\lambda)\lesssim\lambda^{1/2}$ as $\lambda\downarrow 0$ while $0<\sqrt{\V(\bar{x};\lambda)}\lesssim \bar{x}$ as $\bar{x}\downarrow 0$, so for the second term we have
    \begin{equation}
        0<\int_0^{x_+(\lambda)}\sqrt{\V(\bar{x};\lambda)}\,\dd\bar{x} \lesssim\lambda
        \label{eq:second-term}
    \end{equation}
    (independent of $x$).  Using $\sqrt{\V(\bar{x};\lambda)-\lambda}>0$ for $\bar{x}>x_+(\lambda)$:
    \begin{equation}
        0<\int_{x_+(\lambda)}^x\frac{\dd\bar{x}}{\sqrt{\V(\bar{x};\lambda)-\lambda}+\sqrt{\V(\bar{x};\lambda)}}\le\int_{x_+(\lambda)}^\delta\frac{\dd\bar{x}}{\sqrt{\V(\bar{x};\lambda)}} +\int_\delta^x\frac{\dd\bar{x}}{\sqrt{\V(\bar{x};\lambda)}}.
    \end{equation}
    Again invoking the hypotheses on $\V$, $x_+(\lambda)\gtrsim \lambda^{1/2}$ and $\sqrt{\V(\bar{x};\lambda)}\gtrsim\bar{x}$ as $\bar{x}\downarrow 0$, so
    \begin{equation}
        0<\int_{x_+(\lambda)}^\delta\frac{\dd\bar{x}}{\sqrt{\V(\bar{x};\lambda)}} \lesssim\ln(\lambda^{-1})
        \label{eq:first-term-1}
    \end{equation}
    as $\lambda\downarrow 0$ (independent of $x$).  
    Since the hypothesis $\V(x;\lambda)\gtrsim 1$ as $x\to\infty$ implies also that $\V(x;\lambda)\gtrsim \V(x;\lambda)^{-1}$ holds for all $x\ge\delta$,
    we obtain
    \begin{equation}
        \int_\delta^x\frac{\dd\bar{x}}{\sqrt{\V(\bar{x};\lambda)}}\lesssim \int_\delta^x\sqrt{\V(\bar{x};\lambda)}\,\dd\bar{x}<\int_0^x\sqrt{\V(\bar{x};\lambda)}\,\dd\bar{x}.
        \label{eq:first-term-2}
    \end{equation}
    Finally, since $\lambda\lesssim \lambda\ln(\lambda^{-1})$ and 
    \begin{equation}
        \int_0^x\sqrt{\V(\bar{x};\lambda)}\,\dd\bar{x}\ge\int_0^\delta\sqrt{\V(\bar{x};\lambda)}\,\dd\bar{x},
    \end{equation}
    all three estimates \eqref{eq:second-term}, \eqref{eq:first-term-1}, and \eqref{eq:first-term-2} can be combined into the desired relative error estimate.
\end{proof}

\begin{corollary}[Outer asymptotics of the Langer transformation]
    Suppose that $\V(x;\lambda)$ satisfies the hypotheses of Theorem~\ref{thm:Schroedinger-BS}.  Then as $\lambda\downarrow 0$, 
    \begin{equation}
        y=g(x;\lambda) =g_0(x)\left(1+O(\lambda\ln(\lambda^{-1}))\right),\quad g_0(x):=2\sqrt{\int_0^x\sqrt{\V(\bar{x})}\,\dd\bar{x}},
        \label{eq:outer-limit}
    \end{equation}
    holds uniformly for $x\ge\delta$.
    Also, for $x\ge\delta$ and $\lambda>0$ sufficiently small, we have the lower bound
    \begin{equation}
    g'(x;\lambda)\gtrsim y^{(p-2)/(p+2)},
\label{eq:gp-outer-lower}
\end{equation}
and the upper bounds
\begin{equation}
    g''(x;\lambda)\lesssim y^{(p-6)/(p+2)},\quad g'''(x;\lambda)\lesssim y^{(p-10)/(p+2)}.
    \label{eq:gpp-gppp-outer-upper}
\end{equation}
Here $p=p_+$ is the growth exponent of $\mathcal{V}(x;\lambda)$ as $x\to+\infty$.
\label{cor:outer}
\end{corollary}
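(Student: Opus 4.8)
\emph{Proof strategy.} The plan is to establish the three assertions in order, deriving \eqref{eq:outer-limit} from the explicit formula \eqref{eq:g-x-large} and then the derivative bounds by repeatedly differentiating the defining relation \eqref{eq:Langer-ODE}, organizing everything through the single comparison function $g_0(x)$.

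For \eqref{eq:outer-limit} I would start from $y=g(x;\lambda)=2\sqrt{-b}\,t\bigl([-\tfrac{3}{4b}J(x;\lambda)]^{2/3}\bigr)$ in \eqref{eq:g-x-large}. Lemma~\ref{lem:b} gives $-b=\lambda/\phi(\lambda)\asymp\lambda$, while Lemma~\ref{lem:J} gives $J(x;\lambda)=\bigl(\int_0^x\sqrt{\V(\bar x;\lambda)}\,\dd\bar x\bigr)(1+O(\lambda\ln(\lambda^{-1})))$, a quantity bounded above and away from zero uniformly for $x\ge\delta$; hence the argument of $t$ is $\asymp\lambda^{-2/3}$. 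Using the large-argument expansion $t(\xi)=\sqrt{4/3}\,\xi^{3/4}\bigl(1+O(\xi^{-3/2}\ln\xi)\bigr)$ as $\xi\to+\infty$ — which follows from the properties of $\zeta$ and its inverse $t$ developed in Appendix~\ref{sec:zeta-t-properties}, and whose relative error is $O(\lambda\ln(\lambda^{-1}))$ for $\xi\asymp\lambda^{-2/3}$ — the prefactors collapse algebraically, since $2\sqrt{-b}\cdot\sqrt{4/3}\,[-\tfrac{3}{4b}J]^{1/2}=2\sqrt J$, to give $g(x;\lambda)=2\sqrt{J(x;\lambda)}\,(1+O(\lambda\ln(\lambda^{-1})))$. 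Because $2\sqrt{J(x;\lambda)}=g_0(x)(1+O(\lambda\ln(\lambda^{-1})))$ — absorbing into the error both the relative error of Lemma~\ref{lem:J} and the replacement of $\V(\bar x;\lambda)$ by $\V(\bar x;0)=\V(\bar x)$ in the leading term, legitimate by continuity of $\V$ in $(x,\lambda)$ together with the uniform tail hypothesis — this yields \eqref{eq:outer-limit}, the uniformity being inherited from Lemmas~\ref{lem:b} and~\ref{lem:J}.

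For the derivative estimates I would work from \eqref{eq:Langer-ODE}, which for the Schr\"odinger coefficient matrix reads $g'(x;\lambda)^2\bigl(\tfrac14 g(x;\lambda)^2+b\bigr)=\V(x;\lambda)-\lambda$ and holds for $x>x_+(\lambda)$, in particular for all $x\ge\delta$ once $\lambda$ is small enough that $x_+(\lambda)<\delta$. Three elementary comparisons drive the argument. First, by \eqref{eq:outer-limit}, $b\to 0$, and $g_0(x)\ge g_0(\delta)>0$, one has $\tfrac14 g(x;\lambda)^2+b\asymp g_0(x)^2$ uniformly. Second, since $\V(\diamond;\lambda)$ is increasing on $(0,\infty)$ and $\V(\delta;\lambda)\to\V(\delta;0)>0$, for $\lambda$ small one has $\V(x;\lambda)-\lambda\ge\tfrac12\V(x;\lambda)$, and the uniform tail hypothesis gives $\V(x;\lambda)\asymp\V(x;0)=\V(x)$, so $\V(x;\lambda)-\lambda\asymp\V(x)$ for $x\ge\delta$. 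Third, from the definition of $g_0$ and $\V(x)=\V_+x^{p}(1+o(1))$ one gets $g_0(x)\asymp x^{(p+2)/4}$ as $x\to\infty$ (trivially bounded for $x$ bounded), hence $\V(x)/g_0(x)^2\asymp g_0(x)^{(2p-4)/(p+2)}$ uniformly for $x\ge\delta$. Substituting into the displayed relation gives $g'(x;\lambda)^2\asymp g_0(x)^{(2p-4)/(p+2)}$, and since $y=g(x;\lambda)\asymp g_0(x)$, this is \eqref{eq:gp-outer-lower} (in fact a two-sided bound).

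For \eqref{eq:gpp-gppp-outer-upper} I would differentiate the relation once and solve for $g''(x;\lambda)=\bigl(\V'(x;\lambda)-\tfrac12 g(x;\lambda)g'(x;\lambda)^3\bigr)\big/\bigl(2g'(x;\lambda)(\tfrac14 g(x;\lambda)^2+b)\bigr)$. Using $\V'(x;\lambda)=O(|x|^{p-1})$ and the bounds $g\asymp g_0$, $g'\asymp g_0^{(p-2)/(p+2)}$ already in hand, both terms of the numerator are $O\bigl(g_0(x)^{4(p-1)/(p+2)}\bigr)$ while the denominator is $\asymp g_0(x)^{(3p+2)/(p+2)}$, so $|g''(x;\lambda)|\lesssim g_0(x)^{(p-6)/(p+2)}\asymp y^{(p-6)/(p+2)}$. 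Differentiating once more expresses $g'''$ as a rational function of $g,g',g'',\V',\V''$ with constant coefficients (no higher $\V$-derivatives appear); invoking $\V''(x;\lambda)=O(|x|^{p-2})$ and the estimates just obtained, every term of the numerator is $O\bigl(g_0(x)^{(7p-6)/(p+2)}\bigr)$ and the denominator is $\asymp g_0(x)^{(6p+4)/(p+2)}$, whence $|g'''(x;\lambda)|\lesssim g_0(x)^{(p-10)/(p+2)}\asymp y^{(p-10)/(p+2)}$. Since only upper bounds are sought, the fact that these numerators are differences of same-order terms is harmless: no cancellation need be exploited. The work is almost entirely exponent bookkeeping; the only genuine point of care is uniformity down to $\lambda\downarrow 0$, supplied by \eqref{eq:outer-limit} and Lemmas~\ref{lem:b}--\ref{lem:J}, and the main place something could go wrong is the exponent arithmetic in the $g'''$ estimate, which is why I would keep every bound expressed through $g_0(x)$ and pass to powers of $y$ only at the very end.
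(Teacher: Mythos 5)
Your proposal is correct in substance, and it splits naturally into two parts relative to the paper. For \eqref{eq:outer-limit} you follow essentially the paper's own route: the explicit formula \eqref{eq:g-x-large}, Lemma~\ref{lem:J} to control $J$, Lemma~\ref{lem:b} to get $-b\asymp\lambda$, and the large-argument behavior of $t(\diamond)$, with the algebraic collapse $2\sqrt{-b}\,(4/3)^{1/2}m^{3/4}=2\sqrt{J}$. For the derivative bounds, however, you take a genuinely different route: instead of differentiating $g=2\sqrt{-b}\,t(m)$ three times and invoking the expansions of $t',t'',t'''$ together with $J',J'',J'''$ (the paper's method, which also produces the intermediate relation $g'=g_0'(1+O(\lambda^{2/3}))$ and the $x$--$y$ conversion \eqref{eq:x-y}), you differentiate the defining ODE \eqref{eq:Langer-ODE} implicitly and do exponent bookkeeping against $g_0$. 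Your exponent arithmetic checks out (including the $(7p-6)/(p+2)$ versus $(6p+4)/(p+2)$ count coming from the quotient form of $g''$), and the auxiliary facts you need --- $\tfrac14g^2+b\asymp g_0^2$, $\V(x;\lambda)-\lambda\asymp\V(x;\lambda)\asymp x^p$ for $x\ge\delta$ uniformly in small $\lambda$ --- do follow from monotonicity of $\V$, the behavior near $x=0$, and the uniform tail hypothesis. What your route buys is economy (no $t''$, $t'''$ asymptotics and no separate $J$-derivative estimates) and an automatic two-sided bound on $g'$; what the paper's route buys is finer information ($g'\sim g_0'$ with an explicit relative error), which it reuses elsewhere.

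Two caveats you should make explicit. First, the refined expansion $t(\xi)=(4/3)^{1/2}\xi^{3/4}(1+O(\xi^{-3/2}\ln\xi))$ that you invoke to reach the stated $O(\lambda\ln(\lambda^{-1}))$ error is true, but it is sharper than what Appendix~\ref{sec:zeta-t-properties} records (\eqref{eq:t-large-z} only gives $O(\xi^{-1})$, which by itself yields an $O(\lambda^{2/3})$ relative error, exactly what the paper's own proof produces); so you must actually derive the refined error from \eqref{eq:zeta-define} rather than cite the appendix. Second, your replacement of $\V(\bar x;\lambda)$ by $\V(\bar x;0)$ inside $g_0$ ``with relative error $O(\lambda\ln(\lambda^{-1}))$'' is not justified by the hypotheses: continuity in $\lambda$ plus a tail estimate that is merely uniform in $\lambda$ gives no rate, and the regularity of $\partial_\lambda\V$ is only assumed near $x=0$. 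The intended reading (consistent with the paper's proof, which keeps the leading term of Lemma~\ref{lem:J} as $\int_0^x\sqrt{\V(\bar x;\lambda)}\,\dd\bar x$ and later writes $\V_+(\lambda)$) is that $g_0$ retains the energy dependence of $\V$; with that reading your extra replacement step should simply be dropped, and the rest of your argument is unaffected.
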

\begin{proof}
    We write \eqref{eq:g-x-large} in the form
    \begin{equation}
        g(x;\lambda)=2\sqrt{-b}t(m),\quad m:=\left[\frac{-3}{4b}J(x;\lambda)\right]^{2/3},\quad\frac{\dd m}{\dd x}=-\frac{1}{2b}m^{-1/2}J'(x;\lambda).
    \end{equation}
    Repeated differentiation therefore yields
    \begin{equation}
        g'(x;\lambda)=\frac{1}{\sqrt{-b}}\frac{t'(m)}{m^{1/2}}J'(x;\lambda),
    \end{equation}
    \begin{equation}
        g''(x;\lambda)=\frac{1}{\sqrt{-b}}\frac{t'(m)}{m^{1/2}}J''(x;\lambda) +\frac{1}{4(-b)^{3/2}}\left[2\frac{t''(m)}{m}-\frac{t'(m)}{m^{2}}\right]J'(x;\lambda)^2,
        \label{eq:gpp-outer}
    \end{equation}
    and
    \begin{equation}
    \begin{split}
        g'''(x;\lambda)&= \frac{1}{\sqrt{-b}}\frac{t'(m)}{m^{1/2}}J'''(x;\lambda)+\frac{3}{4(-b)^{3/2}}\left[2\frac{t''(m)}{m}-\frac{t'(m)}{m^{2}}\right]J'(x;\lambda)J''(x;\lambda)\\
        &\quad\quad+\frac{1}{8(-b)^{5/2}}\left[2\frac{t'''(m)}{m^{3/2}}-3\frac{t''(m)}{m^{5/2}}+2\frac{t'(m)}{m^{7/2}}\right]J'(x;\lambda)^3.
    \end{split}
    \label{eq:gppp-outer}
    \end{equation}    
    According to Lemma~\ref{lem:J}, the condition $x\ge\delta$ bounds $J(x;\lambda)$ away from zero uniformly for small $\lambda$, and by Lemma~\ref{lem:b}, we have $b=-\lambda/\phi(\lambda)=O(\lambda)$ as $\lambda\downarrow 0$.  Hence $m\gtrsim\lambda^{-2/3}$, so we can apply to \eqref{eq:g-x-large} and its first three derivatives with respect to $x$ the asymptotic approximations of derivatives of the analytic function $t(\diamond)$ given in \eqref{eq:t-large-z} from Appendix~\ref{sec:zeta-t-properties}.
    Using $m^{-1}=O(\lambda^{2/3})$ which dominates the relative error term in \eqref{eq:J-expand},  
it follows from Lemma~\ref{lem:J} that
\begin{equation}
    g(x;\lambda)=2\sqrt{-b}\left(\frac{4}{3}\right)^{1/2}m^{3/4}(1+O(m^{-1})) = g_0(x)(1+O(\lambda^{2/3})).
\end{equation}
Since the hypotheses on $\V$ imply $g_0(x)=(2/(p+2))^{1/2}\V_+(\lambda)^{1/4}x^{p/4+1/2}(1+o(1))$ as $x\to+\infty$, with $y=g(x;\lambda)$ we deduce that 
\begin{equation}
 x=\left(1+\frac{1}{2}p\right)^{2/(p+2)}\V_+(\lambda)^{-1/(p+2)}y^{4/(p+2)}\left(1+O(\lambda^{2/3})+o(1)\right)   
 \label{eq:x-y}
\end{equation}
with the term $O(\lambda^{2/3})$ being uniform for $x\ge\delta$ and $o(1)$ referring to the limit $x\to+\infty$.
Now, for $x\ge\delta$, $\sqrt{\V(x;\lambda)-\lambda}=\sqrt{\V(x;\lambda)}(1+O(\lambda))$, so the derivatives of $J(x;\lambda)$ are
\begin{equation}
\begin{split}
    J'(x;\lambda)&=\sqrt{\V(x;\lambda)-\lambda}=\V(x;\lambda)^{1/2}(1+O(\lambda)),\\ 
    J''(x;\lambda)&=\frac{\V'(x;\lambda)}{2\sqrt{\V(x;\lambda)-\lambda}}=\frac{\V'(x;\lambda)}{2\V(x;\lambda)^{1/2}}(1+O(\lambda)),\\ J'''(x;\lambda)&=\frac{\V''(x;\lambda)}{2\sqrt{\V(x;\lambda)-\lambda}}-\frac{\V'(x;\lambda)^2}{4(\V(x;\lambda)-\lambda)^{3/2}} = \frac{\V''(x;\lambda)}{2\V(x;\lambda)^{1/2}}(1+O(\lambda))+\frac{\V'(x;\lambda)^2}{4\V(x;\lambda)^{3/2}}(1+O(\lambda)).
\end{split}
\label{eq:J-derivs}
\end{equation}
 so similarly,
\begin{equation}
    g'(x;\lambda)=\frac{1}{\sqrt{-b}}\left(\frac{3}{4}\right)^{1/2}m^{-3/4}(1+O(m^{-1}))\sqrt{\V(x;\lambda)-\lambda} = g_0'(x)(1+O(\lambda^{2/3})).
\end{equation}
Using \eqref{eq:x-y} establishes the lower bound \eqref{eq:gp-outer-lower}.
The hypotheses on $\V$ and \eqref{eq:J-derivs} also imply that $J''(x;\lambda)\lesssim x^{p/2-1}$ and $J'''(x;\lambda)\lesssim x^{p/2-2}$ for $x\ge\delta$ and $\lambda>0$ sufficiently small.  Combining these estimates with \eqref{eq:gpp-outer}, \eqref{eq:gppp-outer}, and \eqref{eq:t-large-z} together with Lemma~\ref{lem:J} and $b\gtrsim\lambda$ from Lemma~\ref{lem:b} gives
$g''(x;\lambda)\lesssim x^{p/4-3/2}$ and $g'''(x;\lambda)\lesssim x^{p/4-5/2}$ for $x\ge\delta$ and $\lambda>0$ sufficiently small.  Combining these with \eqref{eq:x-y} yields \eqref{eq:gpp-gppp-outer-upper} and completes the proof.
\end{proof}

\subsection{Estimates valid for $\frac{1}{2}x_-(\lambda)<x<\delta$ as $\lambda\downarrow 0$}
\label{sec:InnerEstimates}
Since $x=x_-(\lambda)$ corresponds to $t=-1$, which is the largest real value at which $\zeta(t)$ defined in Appendix~\ref{sec:zeta-t-properties} fails to be analytic, it is useful to bound $t$ away from $-1$ and hence we consider $x>\frac{1}{2}x_-(\lambda)$.  The upper bound of $x<\delta$ then reaches the lower bound on $x$ where the analysis of Section~\ref{sec:OuterEstimates} holds.  
\begin{lemma}[Expansion of $I$]
Suppose $\V(x;\lambda)$ satisfies the hypotheses of Theorem~\ref{thm:Schroedinger-BS}. Then $\xi\mapsto I(\xi;\sigma)$ has the expansion
\begin{equation}
\label{eq:Iexpand}
    =1 -\frac{\sqrt{2}\V'''(0;\sigma^2)}{3\V''(0;\sigma^2)^{3/2}}\sigma\int_0^1 t(\xi q^{2/3})\,\dd q + O\left(\sigma^2\langle \xi\rangle^{3/2}\right),\quad \sigma\langle \xi\rangle^{3/4}\lesssim 1,\quad\langle \diamond\rangle:=\sqrt{1+\diamond^2}.
\end{equation}
    \label{lem:I-expand}
\end{lemma}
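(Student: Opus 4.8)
The plan is to reduce everything to a Taylor expansion of the integrand in \eqref{eq:I-define} about the minimizer of the potential, combined with the normalization of $\phi$ already recorded in \eqref{eq:phi-reciprocal}. First I would control the size of the inner argument. From the properties of $t$ established in Appendix~\ref{sec:zeta-t-properties} (in particular the large-argument asymptotics \eqref{eq:t-large-z} together with the continuation \eqref{eq:zeta-less-than-1} valid down to the branch point at $\zeta(-1)$), one has the uniform bound $|t(w)|\lesssim\langle w\rangle^{3/4}$ on the whole domain of $t$; since $|\xi q^{2/3}|\le|\xi|$ for $q\in[0,1]$, this gives $|t(\xi q^{2/3})|\lesssim\langle\xi\rangle^{3/4}$ uniformly in $q$. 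Consequently, under the standing hypothesis $\sigma\langle\xi\rangle^{3/4}\lesssim1$, the argument $s=\sigma t(\xi q^{2/3})$ of $x'(\diamond;\sigma^2)$ in \eqref{eq:I-define} stays in a fixed compact neighborhood of $s=0$ for all $q\in[0,1]$, on which Lemma~\ref{lem:x} ensures $s\mapsto x'(s;\lambda)$ is $C^2$ with derivatives through third order bounded uniformly for $\lambda\in[0,\lambda_\mathrm{max}]$.

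Next I would Taylor-expand with Lagrange remainder, $x'(s;\lambda)=x'(0;\lambda)+x''(0;\lambda)s+O(s^2)$ with a constant uniform in $\lambda$, substitute $s=\sigma t(\xi q^{2/3})$ with $\lambda=\sigma^2$, and integrate over $q$; using $t(\xi q^{2/3})^2\lesssim\langle\xi\rangle^{3/2}$ this gives
\begin{equation*}
\int_0^1 x'(\sigma t(\xi q^{2/3});\sigma^2)\,\dd q=x'(0;\sigma^2)+x''(0;\sigma^2)\,\sigma\int_0^1 t(\xi q^{2/3})\,\dd q+O(\sigma^2\langle\xi\rangle^{3/2}).
\end{equation*}
It then remains to identify the two coefficients $\tfrac12\phi(\sigma^2)x'(0;\sigma^2)$ and $\tfrac12\phi(\sigma^2)x''(0;\sigma^2)$. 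Differentiating the identity $s^2=\V(x(s;\lambda);\lambda)$ three times in $s$ and setting $s=0$ (using $\V(0;\lambda)=\V'(0;\lambda)=0$, which costs only $\V\in C^3$) yields $x'(0;\lambda)=\sqrt{2/\V''(0;\lambda)}$ and $x''(0;\lambda)=-2\V'''(0;\lambda)/(3\V''(0;\lambda)^2)$, both continuous in $\lambda$ by Lemma~\ref{lem:x}. On the other hand, Taylor-expanding $x'(\lambda^{1/2}w;\lambda)$ in $w$ inside \eqref{eq:phi-reciprocal} and discarding the odd term gives $1/\phi(\lambda)=\tfrac12 x'(0;\lambda)+O(\lambda)$, so that, using $\phi$ bounded away from $0$ near $\lambda=0$ (Lemma~\ref{lem:b}), $\tfrac12\phi(\lambda)x'(0;\lambda)=1+O(\lambda)$ and $\tfrac12\phi(\lambda)x''(0;\lambda)=-\sqrt2\,\V'''(0;\lambda)/(3\V''(0;\lambda)^{3/2})+O(\lambda)$.

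Multiplying the displayed identity by $\tfrac12\phi(\sigma^2)$ and collecting, the constant part is $1+O(\sigma^2)$, the linear part is $\bigl(-\sqrt2\,\V'''(0;\sigma^2)/(3\V''(0;\sigma^2)^{3/2})+O(\sigma^2)\bigr)\sigma\int_0^1 t(\xi q^{2/3})\,\dd q$ whose error contribution is $O(\sigma^3\langle\xi\rangle^{3/4})$, and the remainder is $O(\sigma^2\langle\xi\rangle^{3/2})$; since $\langle\xi\rangle\ge1$ and $\sigma\langle\xi\rangle^{3/4}\lesssim1$, both $O(\sigma^2)$ and $O(\sigma^3\langle\xi\rangle^{3/4})$ are dominated by $O(\sigma^2\langle\xi\rangle^{3/2})$, and \eqref{eq:Iexpand} follows. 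The step needing the most care is the uniformity in $\lambda$ over $[0,\lambda_\mathrm{max}]$: the Lagrange-remainder constant and the $O(\lambda)$ error in the relation between $\phi$ and $x'(0;\lambda)$ must both be shown uniform, which is precisely what the mixed-partial continuity in Lemma~\ref{lem:x} and the $C^1$ regularity in Lemma~\ref{lem:b} provide; the other load-bearing ingredient is the growth bound $|t(w)|\lesssim\langle w\rangle^{3/4}$ from the appendix, which keeps the quadratic remainder at size $\sigma^2\langle\xi\rangle^{3/2}$ and makes the third-order correction genuinely subordinate to it.
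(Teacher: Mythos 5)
Your proposal is correct and follows essentially the same route as the paper: Taylor-expand $x'(s;\sigma^2)$ about $s=0$ with a Lagrange remainder controlled uniformly via Lemma~\ref{lem:x}, compute $x'(0;\sigma^2)$ and $x''(0;\sigma^2)$ by implicit differentiation of $s^2=\V(x(s;\lambda);\lambda)$, use the asymptotics \eqref{eq:t-large-z} to keep $\sigma t(\xi q^{2/3})=O(1)$ under $\sigma\langle\xi\rangle^{3/4}\lesssim 1$, and integrate term by term in \eqref{eq:I-define}. The only difference is cosmetic: you normalize $\tfrac12\phi(\sigma^2)x'(0;\sigma^2)=1+O(\sigma^2)$ directly from \eqref{eq:phi-reciprocal}, whereas the paper reaches the same constants through the expansion of $\phi$ in Lemma~\ref{lem:b}, leaving that bookkeeping implicit.
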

\begin{proof}
By Lemma~\ref{lem:b} we have $\phi(\sigma^2)=\sqrt{2\V''(0;0)}+O(\sigma^2)$,
            and Lemma~\ref{lem:x} implies that $s\mapsto x'(s;\sigma^2)$ has two continuous derivatives in a neighborhood of $s=0$.  By repeated implicit differentiation of $s^2=\V(x(s;\sigma^2);\sigma^2)$, it has the Taylor representation
    \begin{equation}
        x'(s;\sigma^2)
        =x'(0;\sigma^2)+x''(0;\sigma^2)s +\frac{1}{2}x'''(c;\sigma^2)s^2
        = \sqrt{\frac{2}{\V''(0;\sigma^2)}}-\frac{2\V'''(0;\sigma^2)}{3\V''(0;\sigma^2)^2}s + \frac{1}{2}x'''(c;\sigma^2)s^2
        \label{eq:x'-Taylor}
    \end{equation}
    holding for some number $c$ between $0$ and $s$.  Due to the asymptotic behavior of $t(\diamond)$ given in \eqref{eq:t-large-z} from Appendix~\ref{sec:zeta-t-properties}, for $\xi$ as large as $\xi=O(\sigma^{-4/3})$, we have $\sigma t(\xi q^{2/3})=O(1)$ uniformly for $q\in (0,1)$, so the result follows from integration term-by-term in \eqref{eq:I-define}.
\end{proof}

With this, we can prove the following lower bound on $g'(x;\lambda)$ valid near the turning points.  The \emph{inner region} refers to the domain $\frac{1}{2}x_-(\lambda)<x<\delta$ with $\lambda>0$ sufficiently small.
\begin{lemma}[Lower bound on $g'(x;\lambda)$ in the inner region]
    Suppose $\V(x;\lambda)$ satisfies the hypotheses of Theorem~\ref{thm:Schroedinger-BS}.  If $\delta>0$ is sufficiently small, $g'(x;\lambda)\gtrsim 1$ holds for $\frac{1}{2}x_-(\lambda)<x<\delta$ and $\lambda>0$ sufficiently small.  
    \label{lem:gp-lower-bound-inner}
\end{lemma}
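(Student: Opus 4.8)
The plan is to work from the compact expression \eqref{eq:gprime-simple} for $g'(x;\lambda)$, which writes it as a product of three factors: $\phi(\lambda)^{1/2}$, $I(\xi;\lambda^{1/2})^{-1/3}$, and $t'(\xi I(\xi;\lambda^{1/2})^{2/3})/t'(\xi)$. By Lemma~\ref{lem:b}, $\phi(\lambda)^{1/2}=(2\V''(0;0))^{1/4}+O(\lambda)$ is bounded above and below by positive constants for $\lambda$ small, so it suffices to show that the remaining two factors are also bounded above and below by positive constants, uniformly for $\tfrac12 x_-(\lambda)<x<\delta$ and $\lambda>0$ small, once $\delta$ is chosen small enough; then the desired lower bound $g'(x;\lambda)\gtrsim 1$ follows.

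First I would control $I(\xi;\lambda^{1/2})$. Throughout the inner region $x$ lies in $(-\delta,\delta)$ (since $x_-(\lambda)\to 0$), so by the representation $s(x;\lambda)=xk(x;\lambda)$ with $k$ continuous (Lemma~\ref{lem:x}) we have $|s(x;\lambda)|\lesssim|x|<\delta$, whence the argument $s(x;\lambda)/\sqrt\lambda$ of $\zeta$ has modulus $\lesssim\delta/\sqrt\lambda$; using the growth $\zeta(\tau)\lesssim\langle\tau\rangle^{4/3}$ from Appendix~\ref{sec:zeta-t-properties} this gives $\langle\xi\rangle\lesssim\langle\delta/\sqrt\lambda\rangle^{4/3}$, hence $\lambda^{1/2}\langle\xi\rangle^{3/4}\lesssim\lambda^{1/2}+\delta\lesssim\delta$ once $\lambda<\delta^2$. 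In particular the hypothesis $\sigma\langle\xi\rangle^{3/4}\lesssim 1$ of Lemma~\ref{lem:I-expand} is satisfied, and since $\int_0^1|t(\xi q^{2/3})|\,\dd q\lesssim\langle\xi\rangle^{3/4}$ (again by the asymptotics of $t$ from the appendix), Lemma~\ref{lem:I-expand} yields $I(\xi;\lambda^{1/2})=1+O(\lambda^{1/2}\langle\xi\rangle^{3/4})=1+O(\delta)$. Choosing $\delta$ small thus confines $I(\xi;\lambda^{1/2})$ to a fixed small neighborhood of $1$; in particular $I^{1/3}$ (hence $\phi(\lambda)^{1/2}/I^{1/3}$) and $I^{2/3}$ are all bounded above and below by positive constants.

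Next I would bound the ratio $t'(\xi I^{2/3})/t'(\xi)$. The role of the restriction $x>\tfrac12 x_-(\lambda)$, rather than $x>x_-(\lambda)$ where $\xi=\zeta(-1)$ is the left endpoint of the domain of $t$ and $t'$ is singular, is to keep $\xi$ bounded below by a fixed constant strictly larger than $\zeta(-1)$. Indeed $x\mapsto\zeta(s(x;\lambda)/\sqrt\lambda)=\xi$ is increasing, so its minimum over the inner region is attained at $x=\tfrac12 x_-(\lambda)$, where the identity $t(\xi)=s(x;\lambda)/\sqrt\lambda$ from \eqref{eq:inner-variable} gives $t(\xi)=-\sqrt{\V(\tfrac12 x_-(\lambda);\lambda)/\lambda}$; writing $\V(x;\lambda)=x^2 j(x;\lambda)$ as in \eqref{eq:V-xsquared-j} with $j$ continuous and using $\V(x_-(\lambda);\lambda)=\lambda$ shows $\V(\tfrac12 x_-(\lambda);\lambda)/\lambda\to\tfrac14$ as $\lambda\downarrow 0$, so $t(\xi)\to-\tfrac12$ and hence $\xi\ge\zeta(-\sqrt{1/3})=:\xi_0>\zeta(-1)$ for $\lambda$ small. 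Since $I^{2/3}$ is within $O(\delta)$ of $1$, the argument $\xi I^{2/3}$ is likewise bounded below by a fixed constant $>\zeta(-1)$ (taking $\delta$ small), while $\xi I^{2/3}/\xi=I^{2/3}$ is bounded above and below. Now $t'$ is continuous and strictly positive on $(\zeta(-1),+\infty)$ and satisfies $t'(z)=c_* z^{-1/4}(1+o(1))$ as $z\to+\infty$ by \eqref{eq:t-large-z}; hence whenever $\xi$ and $\xi I^{2/3}$ both lie in $[a,+\infty)$ for some fixed $a>\zeta(-1)$ and $I^{2/3}$ is bounded above and below, the ratio $t'(\xi I^{2/3})/t'(\xi)$ is bounded above and below by positive constants — on the compact part of the range this is continuity and positivity of $t'$, and on the tail the ratio tends to $I^{-1/6}$, which is bounded. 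Combining the three factor estimates gives $g'(x;\lambda)\gtrsim 1$ uniformly, as claimed.

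The main obstacle is the uniformity of the estimate on $I(\xi;\lambda^{1/2})$ all the way out to $x=\delta$: there $\xi$ is as large as $\asymp\delta^{4/3}\lambda^{-2/3}$, which is precisely the largest value for which Lemma~\ref{lem:I-expand} provides any control, and the quantity $\lambda^{1/2}\langle\xi\rangle^{3/4}$ governing the error is then $\asymp\delta$ rather than $o(1)$. This is exactly why $\delta$ must be taken small rather than merely fixed, and why its smallness must be coordinated with the size of the error it controls; the secondary delicate point, the singularity of $t'$ at the left endpoint $\zeta(-1)$, is precisely what forces the hypothesis $x>\tfrac12 x_-(\lambda)$ rather than $x>x_-(\lambda)$.
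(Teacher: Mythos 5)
Your proposal is correct and follows essentially the same route as the paper's (very terse) proof: bound $\phi(\lambda)^{1/2}$ below via Lemma~\ref{lem:b}, use Lemma~\ref{lem:I-expand} to pin $I(\xi;\lambda^{1/2})$ near $1$, and use the two-sided bound $t'(\zeta)\asymp\langle\zeta\rangle^{-1/4}$ away from the endpoint $\zeta(-1)$ to control the ratio in \eqref{eq:gprime-simple}. You additionally spell out details the paper leaves implicit — verifying the hypothesis $\sigma\langle\xi\rangle^{3/4}\lesssim 1$ and checking that $x>\tfrac12 x_-(\lambda)$ forces $t(\xi)\to-\tfrac12$, hence $\xi$ bounded away from $\zeta(-1)$ — and these checks are accurate.
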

\begin{proof}
By Lemma~\ref{lem:I-expand}, if $\lambda^{1/2}\langle \xi\rangle^{3/4}$ is sufficiently small, which is guaranteed by the condition $x_-(\lambda)<x<\delta$ for sufficiently small $\delta>0$, then we will have $\frac{1}{2}\le I(\xi;\lambda^{1/2})\le 2$.  Using the estimates 
$\langle\zeta\rangle^{-1/4}\lesssim    t'(\zeta)\lesssim \langle\zeta\rangle^{-1/4}$ valid for $\zeta\ge \zeta(-1)+\delta$,
using the representation \eqref{eq:gprime-simple} of $g'(x;\lambda)$, and recalling $\phi(\lambda)\to \sqrt{2\V''(0;0)}$ as $\lambda\downarrow 0$ from Lemma~\ref{lem:b} completes the proof.
\end{proof}

Now we turn to $g''(x;\lambda)$.  Recalling the inner variable $\xi$ defined by \eqref{eq:inner-variable}, direct differentiation of \eqref{eq:gprime-simple} using the chain rule and \eqref{eq:general-identity} with $F(\diamond)=t'(\diamond)$ yields
\begin{equation}
    g''(x;\lambda)=\frac{s'(x;\lambda)}{\lambda^{1/2}}\frac{\phi(\lambda)^{1/2}}{t'(\xi)}\left(-\frac{I'(\xi;\lambda^{1/2})}{3I(\xi;\lambda^{1/2})^{4/3}}\frac{t'(\xi I(\xi;\lambda^{1/2})^{2/3})}{t'(\xi)} 
+ \frac{N(\xi;\lambda^{1/2})
    }{I(\xi;\lambda^{1/2})^{1/3}t'(\xi)^2
    }\right)
    \label{eq:gpp-again}
\end{equation}
after also using $x'(s(x;\lambda);\lambda)=x'(\lambda^{1/2}t(\xi);\lambda)$.  Here, the numerator of the second term is defined by
\begin{equation}
    N(\xi;\sigma):=\frac{1}{2}\phi(\sigma^2)x'(\sigma t(\xi);\sigma^2)I(\xi;\sigma)^{-1/3}t''(\xi I(\xi;\sigma)^{2/3})t'(\xi)-t'(\xi I(\xi;\sigma)^{2/3})t''(\xi).
\end{equation}
    To consider $\lambda>0$ small, note first that the factor of $\lambda^{-1/2}$is explicitly cancelled for the first term by using \eqref{eq:Iprime-direct}.  For the second term, one combines $\phi(\lambda)\to \sqrt{2\V''(0;0)}>0$ as $\lambda\downarrow 0$ and $x'(0;0)=\sqrt{2/\V''(0;0)}$ from Lemma~\ref{lem:b} and its proof to get from \eqref{eq:I-define} that $I(\xi;\sigma)\to 1$ as $\sigma\downarrow 0$.  Hence also $N(\xi;\sigma)\to 0$ as $\sigma\downarrow 0$ for each fixed $\xi>\zeta(-1)$, so that
\begin{equation}    N(\xi;\lambda^{1/2})=\int_0^{\lambda^{1/2}}N_\sigma(\xi;\sigma)\,\dd\sigma = \lambda^{1/2}\int_0^1 N_\sigma(\xi;\lambda^{1/2}r)\,\dd r.
\end{equation}
We therefore obtain a formula for $g''(x;\lambda)$ without division by $\lambda^{1/2}$:
\begin{equation}
\begin{split}
g''(x;\lambda)&=\frac{s'(x)
\phi(\lambda)^{1/2}}{t'(\xi)}\left(-\frac{\phi(\lambda)}{6I(\xi;\lambda^{1/2})^{4/3}}\frac{t'(\xi I(\xi;\lambda^{1/2})^{2/3})}{t'(\xi)}\int_0^1x''\left(\lambda^{1/2}t(\xi q^{2/3})\right)t'(\xi q^{2/3})q^{2/3}\,\dd q \right.\\
    &\qquad \left.\vphantom{-\frac{\phi(\lambda)}{6I(\xi;\lambda^{1/2})^{4/3}}\frac{t'(\xi I(\xi;\lambda^{1/2})^{2/3})}{t'(\xi)}}+\frac{1}{I(\xi;\lambda^{1/2})^{1/3}t'(\xi)^2}\int_0^1 N_\sigma(\xi;\lambda^{1/2} r)\,\dd r\right).
    \end{split}    
\label{eq:gpp-formula}
\end{equation}

With this representation, one can easily check that $g''(x;\lambda)$ has a limit as $\lambda\downarrow 0$ with the inner variable $\xi$ held fixed (the ``inner limit''), which also forces $x\to 0$ since $x=O(\lambda^{1/2})$ for $\xi$ fixed.  Indeed, since $x(0;\lambda)=0$, $x''(s;\lambda)$ is continuous at $s=0$, and $I(\xi;\lambda^{1/2})\to 1$ in this limit, using uniform convergence of the integrand to exchange the limit and the integral,
\begin{equation}
    \lim_{\lambda\downarrow 0}g''(x;\lambda) =
    \frac{s'(0;0)\phi(0)^{1/2}}{t'(\xi)}\left(-\frac{\phi(0)}{6}x''(0;0)\int_0^1t'(\xi q^{2/3})q^{2/3}\,\dd q +\frac{1}{t'(\xi)^2}N_\sigma(\xi;0)\right),\quad \text{$\xi$ fixed.}
    \label{eq:gpp-inner-limit-1}
\end{equation}
Now, $N(\xi;\sigma)$ depends explicitly on $\sigma$ and also implicitly via $I(\xi;\sigma)$, so 
\begin{multline}
    N_\sigma(\xi;\sigma)=\frac{\phi'(\sigma^2)\sigma x'(\sigma t(\xi);\sigma^2)t''(\xi I(\xi;\sigma)^{2/3})t'(\xi)}{I(\xi;\sigma)^{1/3}}+\frac{\phi(\sigma^2)x''(\sigma t(\xi);\sigma^2)t(\xi)t''(\xi I(\xi;\sigma)^{2/3})t'(\xi)}{2I(\xi;\sigma)^{1/3}}\\+\left[-\frac{\phi(\sigma^2)x'(\sigma t(\xi);\sigma^2)t''(\xi I(\xi;\sigma)^{2/3})t'(\xi)}{6I(\xi;\sigma)^{4/3}} +\frac{\phi(\sigma^2)x'(\sigma t(\xi);\sigma^2)t'''(\xi I(\xi;\sigma)^{2/3})\xi t'(\xi)}{3I(\xi;\sigma)^{2/3}}\right.\\
    \left.-\frac{2t''(\xi I(\xi;\sigma)^{2/3})\xi t''(\xi)}{3I(\xi;\sigma)^{1/3}}\right]I_\sigma(\xi;\sigma).
    \label{eq:N-sigma}
\end{multline}
Noting that 
\begin{equation}
I_\sigma(\xi;\sigma)=\sigma\phi'(\sigma^2)\int_0^1 x'(\sigma t(\xi q^{2/3});\sigma^2)\,\dd q +\frac{1}{2}\phi(\sigma^2)\int_0^1 x''(\sigma t(\xi q^{2/3});\sigma^2)t(\xi q^{2/3})\,\dd q,
\label{eq:Itilde-sigma}
\end{equation}
letting $\sigma\downarrow 0$ gives 
\begin{multline}
    N_\sigma(\xi;0)=\frac{\phi(0)x''(0;0)}{2}t(\xi)t'(\xi)t''(\xi) + \left[-\frac{\phi(0)^2x'(0;0)x''(0;0)}{12}t'(\xi)t''(\xi)\right.\\\left.+\frac{\phi(0)^2x'(0;0)x''(0;0)}{6}\xi t'(\xi)t'''(\xi) - \frac{\phi(0)x''(0;0)}{3}\xi t''(\xi)^2\right]\int_0^1 t(\xi q^{2/3})\,\dd q,
    \label{eq:N-sigma-0}
\end{multline}
interchanging limit and integral by uniform convergence once again. 
The inner limit of $g''(x;\lambda)$ is therefore
\begin{multline}
    \lim_{\lambda\downarrow 0}g''(x;\lambda)=\frac{s'(0;0)\phi(0)^{3/2}x''(0;0)}{12t'(\xi)}\left[-2\int_0^1t'(\xi q^{2/3})q^{2/3} \,\dd q +\frac{6t(\xi)t''(\xi)}{t'(\xi)}\right.\\
    \left.+\left(-\phi(0)x'(0;0)\frac{t''(\xi)}{t'(\xi)}+2\phi(0)x'(0;0)\frac{\xi t'''(\xi)}{t'(\xi)} - 4\frac{\xi t''(\xi)^2}{t'(\xi)^2}\right)\int_0^1t(\xi q^{2/3})\,\dd q\right],\quad \text{$\xi$ fixed.}
\end{multline}
Using $\frac{1}{2}\phi(0)x'(0;0)=1$ and $s'(0;0)\phi(0)^{3/2}x''(0;0)=-2^{5/4}\V'''(0;0)/(3\V''(0;0)^{3/4})$, the inner limit simplifies to 
\begin{equation}
    \lim_{\lambda\downarrow 0}g''(x;\lambda)=
    \frac{2^{1/4}\V'''(0;0)}{9\V''(0;0)^{3/4}}K(\xi),\quad\text{$\xi$ fixed,}
    \label{eq:gpp-inner-limit}
\end{equation}
where a function of the inner variable $\xi$ independent of all details of the potential $\V$ is defined by
\begin{equation}
K(\xi):=\frac{1}{t'(\xi)}\int_0^1t(\xi q^{2/3})q^{2/3}\,\dd q -\frac{3t(\xi)t''(\xi)}{t'(\xi)^2} + \left(\frac{t''(\xi)}{t'(\xi)^2}-\frac{2\xi t'''(\xi)}{t'(\xi)^2}+\frac{2\xi t''(\xi)^2}{t'(\xi)^3}\right)\int_0^1t(\xi q^{2/3})\,\dd q.
    \label{eq:H-def}
\end{equation}
With this calculation finished, we can prove the following.
\begin{lemma}[Upper bound on $|g''(x;\lambda)|$ in the inner region]
    Suppose $\V(x;\lambda)$ satisfies the hypotheses of Theorem~\ref{thm:Schroedinger-BS}. If $\delta>0$ is sufficiently small, $|g''(x;\lambda)|\lesssim 1$ holds for $\frac{1}{2}x_-(\lambda)<x<\delta$ and $\lambda>0$ sufficiently small.
\label{lem:gpp-inner-bounded}
\end{lemma}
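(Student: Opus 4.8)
The plan is to bound the already-prepared expression \eqref{eq:gpp-formula} for $g''(x;\lambda)$ factor by factor, tracking all dependence on the inner variable $\xi$ through the bracket $\langle\,\cdot\,\rangle$. The point of working from \eqref{eq:gpp-formula} rather than from the raw derivative of \eqref{eq:gprime-simple} is that the apparent singular factor $\lambda^{-1/2}$ has already been cancelled there: in the first term via \eqref{eq:Iprime-direct} and in the second via $N(\xi;\lambda^{1/2})=\lambda^{1/2}\int_0^1 N_\sigma(\xi;\lambda^{1/2}r)\,\dd r$. So it remains purely to produce $\xi$-uniform bounds.

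First I would pin down the range of $\xi$ on the inner region. Since $x=x_-(\lambda)$ corresponds to $t=-1$, while $\tfrac12 x_-(\lambda)$ lies strictly inside the classically allowed zone (the near-$0$ behavior of $\V$ from Lemma~\ref{lem:x} gives $s(\tfrac12 x_-(\lambda);\lambda)/\sqrt\lambda\to -\tfrac12$ as $\lambda\downarrow 0$, hence bounded below by $-1+c$), monotonicity of $\zeta$ on $(-1,1)$ forces $\xi=\zeta(s(x;\lambda)/\sqrt\lambda)\ge\zeta(-1)+c'$ for some $c'>0$, uniformly for $\tfrac12 x_-(\lambda)<x<\delta$. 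On the other end, $s(\delta;\lambda)\lesssim\delta$ and the large-argument asymptotics of $\zeta$ from Appendix~\ref{sec:zeta-t-properties} give $\langle\xi\rangle\lesssim\langle\delta\lambda^{-1/2}\rangle^{4/3}\lesssim\lambda^{-2/3}$; in particular $\lambda^{1/2}\langle\xi\rangle^{3/4}\lesssim\delta$, so for $\delta$ small Lemma~\ref{lem:I-expand} applies and yields $\tfrac12\le I(\xi;\lambda^{1/2})\le 2$ with, moreover, $I(\xi;\lambda^{1/2})=1+O(\delta)$ uniformly. Consequently $\xi I(\xi;\lambda^{1/2})^{2/3}$ is comparable to $\xi$ and is also bounded away from $\zeta(-1)$, and $\lambda^{1/2}|t(\xi q^{2/3})|\le s(\delta;\lambda)\lesssim\delta$ for all $q\in(0,1)$, so every occurrence of $x'$ or $x''$ in \eqref{eq:gpp-formula}, \eqref{eq:N-sigma}, \eqref{eq:Itilde-sigma} is at an argument $\lesssim\delta$ in modulus and is therefore bounded by Lemma~\ref{lem:x}; likewise $\phi(\lambda),\phi'(\lambda)$ are bounded by Lemma~\ref{lem:b}. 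In this regime the Appendix estimates $t'(\eta)\asymp\langle\eta\rangle^{-1/4}$, $t''(\eta)\lesssim\langle\eta\rangle^{-5/4}$, $t'''(\eta)\lesssim\langle\eta\rangle^{-9/4}$, $|t(\eta)|\lesssim\langle\eta\rangle^{3/4}$ (cf. \eqref{eq:t-large-z}) all hold at $\eta=\xi$ and $\eta=\xi I^{2/3}$, and $t'(\xi I^{2/3})/t'(\xi)\asymp 1$.

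With these preliminaries, the prefactor $s'(x;\lambda)\phi(\lambda)^{1/2}/t'(\xi)$ of \eqref{eq:gpp-formula} is $\lesssim\langle\xi\rangle^{1/4}$ (boundedness of $s'$ from Lemma~\ref{lem:x}), so it suffices to bound each of the two bracketed terms by $\lesssim\langle\xi\rangle^{-1/4}$. For the first bracketed term I would use the elementary estimate $\int_0^1|t'(\xi q^{2/3})|q^{2/3}\,\dd q\lesssim\langle\xi\rangle^{-1/4}$ (split the $q$-integral at the value where $|\xi|q^{2/3}\asymp 1$ and use $|t'|\lesssim\langle\cdot\rangle^{-1/4}$ on the outer part, $|t'|\lesssim 1$ on the inner part), together with boundedness of $\phi$, $x''$, $I\asymp 1$, and $t'(\xi I^{2/3})/t'(\xi)\asymp 1$; this gives exactly $\lesssim\langle\xi\rangle^{-1/4}$. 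For the second bracketed term, $1/(I^{1/3}t'(\xi)^2)\lesssim\langle\xi\rangle^{1/2}$, so it is enough to show $\int_0^1|N_\sigma(\xi;\lambda^{1/2}r)|\,\dd r\lesssim\langle\xi\rangle^{-3/4}$ uniformly. Here I would run through \eqref{eq:N-sigma} with $I_\sigma$ expanded by \eqref{eq:Itilde-sigma}: each summand is an $O(1)$ algebraic/analytic factor (a product of controlled quantities $\phi,\phi',x',x'',I$) times either an explicit product of $t',t'',t'''$ evaluated at $\xi$ or $\xi I^{2/3}$, or such a product accompanied by one of the integrals $\int_0^1 t(\xi q^{2/3})\,\dd q$ or $\int_0^1 x''(\cdots)t(\xi q^{2/3})\,\dd q$, each $\lesssim\langle\xi\rangle^{3/4}$. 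A homogeneity count using the $\langle\cdot\rangle$-exponents above (and the fact that a bare factor $\xi$ contributes $+1$) shows that in every summand the total $\langle\xi\rangle$-degree is $\le-3/4$; e.g. the ``model'' term $t(\xi)t'(\xi)t''(\xi)$ has degree $\tfrac34-\tfrac14-\tfrac54=-\tfrac34$, and a term of the shape $[\,\xi t'(\xi)t'''(\xi)\,]\int_0^1 t(\xi q^{2/3})\,\dd q$ has degree $(1-\tfrac14-\tfrac94)+\tfrac34=-\tfrac34$. Uniformity in $\sigma=\lambda^{1/2}r\in[0,\lambda^{1/2}]$ is automatic since all $x^{(k)}$-factors sit at arguments of size $\lesssim\delta$. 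Assembling: prefactor $\lesssim\langle\xi\rangle^{1/4}$ times (each bracket $\lesssim\langle\xi\rangle^{-1/4}$) gives $|g''(x;\lambda)|\lesssim 1$ on $\tfrac12 x_-(\lambda)<x<\delta$ for $\delta$ and $\lambda$ small.

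The main obstacle is the last step: verifying that in \emph{every} term of \eqref{eq:N-sigma}–\eqref{eq:Itilde-sigma} a growing $t$-integral factor $\int_0^1 t(\xi q^{2/3})\,\dd q\lesssim\langle\xi\rangle^{3/4}$ is always paired with a product of derivatives $t',t'',t'''$ of total $\langle\xi\rangle$-degree $\le-\tfrac32$, so that the $\langle\xi\rangle^{1/4}$ growth from the prefactor and the $\langle\xi\rangle^{1/2}$ growth from $1/t'(\xi)^2$ are both exactly absorbed. This is precisely the structural cancellation already visible in the finite inner limit \eqref{eq:gpp-inner-limit}–\eqref{eq:H-def}: the present lemma is the uniform-in-$\xi$ upgrade of the boundedness of $K(\xi)$, and the proof is essentially the check that the same cancellation survives when the $\sigma$-dependent factors are reinstated.
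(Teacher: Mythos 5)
Your argument is correct, and it rests on the same ingredients the paper uses — the localization of the inner variable ($\xi$ bounded away from $\zeta(-1)$ because $x>\tfrac12 x_-(\lambda)$, and $\xi\lesssim\lambda^{-2/3}$ so that Lemma~\ref{lem:I-expand} gives $I=1+O(\delta)$), boundedness of $s'$, $\phi$, $\phi'$, $x'$, $x''$ near the origin, and the asymptotics \eqref{eq:t-large-z} — but the architecture is different. The paper bounds $g''$ by splitting it as (inner limit) $+$ (difference): the inner limit is bounded because $K(\xi)$ in \eqref{eq:H-def} is analytic with a finite limit at $+\infty$, and the difference between \eqref{eq:gpp-formula} and \eqref{eq:gpp-inner-limit-1} is then controlled factor by factor, with the crucial remark that the growth of $\int_0^1 t(\xi q^{2/3})\,\dd q$ is compensated by the decay of the bracketed terms in \eqref{eq:N-sigma}. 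You instead bound \eqref{eq:gpp-formula} directly by a uniform $\langle\xi\rangle$-degree count, never invoking the inner limit or $K(\xi)$; your verification that every summand of \eqref{eq:N-sigma} with \eqref{eq:Itilde-sigma} has degree $\le-\tfrac34$ (the bracket carrying degree $-\tfrac32$ against $I_\sigma\lesssim\langle\xi\rangle^{3/4}$, and the term $t(\xi)t'(\xi)t''(\xi I^{2/3})$ landing exactly at $-\tfrac34$) is precisely the cancellation the paper flags, made quantitative. What each buys: your route is self-contained for this lemma and arguably more elementary, since it does not require computing \eqref{eq:N-sigma-0} or the inner limit; the paper's route reuses the inner-limit computation, which it needs anyway for the subsequent estimate of $g'''$ and the constancy-of-$K$ lemma, so little is wasted there. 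One small point to make explicit in a final write-up: the two-sided bound $t'(\eta)\asymp\langle\eta\rangle^{-1/4}$ and the boundedness of $t''$, $t'''$ require $\eta$ bounded away from $\zeta(-1)$ (where $t'$ blows up), so you should note, as you do implicitly, that $\delta$ small and $I=1+O(\delta)$ keep $\xi I(\xi;\lambda^{1/2})^{2/3}$, as well as $\xi$ itself, uniformly above $\zeta(-1)+c$ on the inner region.
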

\begin{proof}
Since $K$ is an analytic function of $\xi>\zeta(-1)$ that has a finite limit as $\xi\to+\infty$ according to \eqref{eq:t-large-z} from Appendix~\ref{sec:zeta-t-properties}, the inner limit of $g''(x;\lambda)$ is uniformly bounded when $x>\frac{1}{2}x_-(\lambda)$, which bounds $\xi$ above $\zeta(-1)$, so it suffices to analyze the difference between $g''(x;\lambda)$ and its inner limit, i.e., the difference between the right-hand sides of \eqref{eq:gpp-formula} and \eqref{eq:gpp-inner-limit-1}.  According to Lemma~\ref{lem:x}, $s'(x;\lambda)$ is continuous at $(0;0)$ so $s'(x;\lambda)-s'(0;0)$ is bounded.  By Lemma~\ref{lem:b}, $\phi(\lambda)$ is continuous at $\lambda=0$ and $\phi(0)>0$, so  $\phi(\lambda)^{1/2}-\phi(0)^{1/2}$ is bounded.  Also, from Lemma~\ref{lem:I-expand} for $\lambda>0$ sufficiently small we get $I(\xi;\lambda^{1/2})-1=O(\delta)$ for $\frac{1}{2}x_-(\lambda)<x<\delta$.  Therefore under the same condition, $t'(\xi I(\xi;\lambda^{1/2})^{2/3})/t'(\xi)-1=O(\delta)$.  
Again using Lemma~\ref{lem:x}, $x''(\lambda^{1/2}t(\xi q^{2/3});\lambda)-x''(0;0)$ is bounded uniformly for $0<q<1$, and since $t'(\diamond)$ is a bounded function according to \eqref{eq:t-large-z} this is enough to control the difference between the integral on the first line of \eqref{eq:gpp-formula} and $x''(0;0)\int_0^1t'(\xi q^{2/3})q^{2/3}\,\dd q$.

It therefore remains to show that $N_\sigma(\xi;\lambda^{1/2}r)-N_\sigma(\xi;0)$ is uniformly bounded as $\lambda\downarrow 0$ for $\xi$ as large as $\lambda^{-2/3}$ and $r\in (0,1)$.  But comparing the right-hand sides of \eqref{eq:N-sigma} (with \eqref{eq:Itilde-sigma}) and of \eqref{eq:N-sigma-0}, this follows under the same conditions from the same estimates, although in replacing $x''(\sigma t(\xi q^{2/3});\sigma^2)$ with $x''(0;0)$ in the second term of $I_\sigma(\xi;\lambda^{1/2}r)$ one must use the fact that growth of $\int_0^1 t(\xi q^{2/3})\,\dd q$ is precisely compensated by the decay of the terms in square brackets in \eqref{eq:N-sigma} as $\xi\to+\infty$.
\end{proof}
Next, we obtain $g'''(x;\lambda)$ 
by differentiation of \eqref{eq:gpp-formula}, resulting in an expression with many terms. Differentiation of the factor $s'(x;\lambda)$ yields a suite of terms equal to $s''(x;\lambda)g''(x;\lambda)/s'(x;\lambda)$ which is bounded for $-\frac{1}{2}x_-(\lambda)<x<\delta$ and $\lambda$ small by Lemma~\ref{lem:gpp-inner-bounded}.
For the remaining terms, one can differentiate with respect to $\xi$ and then account for the chain rule by a factor of $\dd \xi/\dd x = s'(x;\lambda)/(\lambda^{1/2}t'(\xi))$ (see \eqref{eq:inner-variable}).  Two different types of terms are generated:  those that come with a factor that is $O(\lambda^{1/2}t'(\xi))$ and those that do not.  Only the latter terms are of concern in bounding $|g'''(x;\lambda)|$ given the chain-rule factor inversely proportional to $\lambda^{1/2}t'(\xi)$.  In particular, all derivatives falling on $I(\xi;\lambda^{1/2})$ can be ignored, as can derivatives of $x^{(k)}(\lambda^{1/2}rt(\xi q^{2/3});\lambda)$ for $(q,r)\in [0,1]^2$.  For example, in the expression on the first line of the right-hand side of \eqref{eq:gpp-formula} (extracting the factor $-s'(x;\lambda)\phi(\lambda)^{3/2}/6$, which has already been accounted for), only three occurrences of $\xi$ (indicated with $\eta$) will fail to produce $O(\lambda^{1/2}t'(\xi))$ factors upon differentiation:
\begin{multline}
    -\frac{\phi(\lambda)^{3/2}s'(x;\lambda)}{6}\frac{\dd}{\dd \xi}\left[\frac{1}{I(\xi;\lambda^{1/2})^{4/3}}\frac{t'(\xi I(\xi;\lambda^{1/2})^{2/3})}{t'(\xi)^2}\int_0^1x''\left(\sqrt{\lambda}t(\xi q^{2/3});\lambda\right)t'(\xi q^{2/3})q^{2/3}\,\dd q\right]\\
    =-\frac{\phi(\lambda)^{3/2}s'(x;\lambda)}{6}\frac{\dd}{\dd \eta}\left.\left[\frac{1}{I(\xi;\lambda^{1/2})^{4/3}}\frac{t'(\eta I(\xi;\lambda^{1/2})^{2/3})}{t'(\eta)^2}\int_0^1x''\left(\sqrt{\lambda}t(\xi q^{2/3});\lambda\right)t'(\eta q^{2/3})q^{2/3}\,\dd q\right]\right|_{\eta=\xi}\\
    +O(\lambda^{1/2}t'(\xi)).
\end{multline}
Performing the differentiation with respect to $\eta$ and expanding the resulting expression in the inner limit of $\lambda\to 0$ with $\xi$ fixed, further controllable error terms of the same order are generated:
\begin{multline}
       -\frac{\phi(\lambda)^{3/2}s'(x;\lambda)}{6}\frac{\dd}{\dd \xi}\left[\frac{1}{I(\xi;\lambda^{1/2})^{4/3}}\frac{t'(\xi I(\xi;\lambda^{1/2})^{2/3})}{t'(\xi)^2}\int_0^1x''\left(\sqrt{\lambda}t(\xi q^{2/3});\lambda\right)t'(\xi q^{2/3})q^{2/3}\,\dd q\right]\\
=-\frac{\phi(0)^{3/2}s'(0;0)x''(0;0)}{6} \frac{\dd}{\dd \xi}\left[\frac{1}{t'(\xi)}\int_0^1 t'(\xi q^{2/3})q^{2/3}\,\dd q\right] + O(\lambda^{1/2}t'(\xi))\\
=\frac{2^{1/4}\V'''(0;0)}{9\V''(0;0)^{3/4}}\frac{\dd}{\dd \xi}\left[\frac{1}{t'(\xi)}\int_0^1 t'(\xi q^{2/3})q^{2/3}\,\dd q\right] + O(\lambda^{1/2}t'(\xi)).
\end{multline}
We therefore recognize the explicit terms as the derivative with respect to $\xi$ of the contribution to the inner limit of $g''(x;\lambda)$ given in \eqref{eq:gpp-inner-limit} arising from the first term in $K(\xi)$ defined in \eqref{eq:H-def}.  A similar computation applies to the terms on the second line of \eqref{eq:gpp-formula}, with the result that
\begin{equation}
    \frac{\dd}{\dd \xi}g''(x;\lambda)=\frac{2^{1/4}\V'''(0;0)}{9\V''(0;0)^{3/4}}K'(\xi) + O(\lambda^{1/2}t'(\xi)),
\end{equation}
where the error term is uniform for $-\frac{1}{2}x_-(\lambda)<x<\delta$, Therefore, putting in the chain rule factor $\dd \xi/\dd x$ shows that 
\begin{equation}
    g'''(x;\lambda)=\frac{s'(x;\lambda)}{\lambda^{1/2}t'(\xi)}\frac{2^{1/4}\V'''(0;0)}{9\V''(0;0)^{3/4}}K'(\xi) + O(1)
    \label{eq:gppp-estimate}
\end{equation}
as $\lambda\downarrow 0$ uniformly on the interval $x_-(E)<x<\delta$.  
In fact, the explicit term vanishes:
\begin{lemma}[Constancy of the function $K(\xi)$]
    $K(\xi)=1$ holds identically for $\xi>\zeta(-1)$.
\end{lemma}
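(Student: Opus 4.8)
The plan is to collapse the definition \eqref{eq:H-def} of $K(\xi)$ into a rational expression in $\xi$, $t(\xi)$, $t'(\xi)$, $t''(\xi)$, $t'''(\xi)$ and then to annihilate it using the fundamental algebraic relation satisfied by $t$ together with two derivatives of that relation. That relation is immediate from \eqref{eq:t-zeta-ODE}: writing $w=\zeta(\tau)$ in $w^{1/2}\,\dd w=\sqrt{\tau^2-1}\,\dd\tau$ gives $\zeta(\tau)^{1/2}\zeta'(\tau)=\sqrt{\tau^2-1}$, and then substituting $\tau=t(\xi)$ and $\zeta'(t(\xi))=1/t'(\xi)$ yields
\[
\xi=t'(\xi)^2\bigl(t(\xi)^2-1\bigr).
\]
I would first record this for $\xi>0$, where every radical below is unambiguous and $t(\xi)>1$, and then extend it to all $\xi>\zeta(-1)$ by analyticity of $t$.

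Next I would evaluate in closed form the two quadratures occurring in $K$, noting first that the weighted one, $\int_0^1 t'(\xi q^{2/3})q^{2/3}\,\dd q$, is simply $\tfrac{\dd}{\dd\xi}\int_0^1 t(\xi q^{2/3})\,\dd q$ by differentiation under the integral sign. For $\xi>0$, the substitution $u=\xi q^{2/3}$ turns $\int_0^1 t(\xi q^{2/3})\,\dd q$ into $\tfrac32\xi^{-3/2}\int_0^\xi t(u)u^{1/2}\,\dd u$, and a further substitution $u=\zeta(\tau)$ — under which \eqref{eq:t-zeta-ODE} replaces $u^{1/2}\,\dd u$ by $\sqrt{\tau^2-1}\,\dd\tau$ and, since $t(0)=1$, carries $u\in(0,\xi)$ to $\tau\in(1,t(\xi))$ — reduces the right-hand side to the elementary integral $\tfrac32\xi^{-3/2}\int_1^{t(\xi)}\tau\sqrt{\tau^2-1}\,\dd\tau=\tfrac12\xi^{-3/2}\bigl(t(\xi)^2-1\bigr)^{3/2}$. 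Rewriting $\bigl(t(\xi)^2-1\bigr)^{3/2}=\xi^{3/2}/t'(\xi)^3$ by means of the displayed relation then gives
\[
\int_0^1 t(\xi q^{2/3})\,\dd q=\frac{1}{2\,t'(\xi)^3},\qquad
\int_0^1 t'(\xi q^{2/3})q^{2/3}\,\dd q=-\frac{3\,t''(\xi)}{2\,t'(\xi)^4},
\]
first for $\xi>0$ and hence, by analyticity, for all $\xi>\zeta(-1)$. (These identities may already be recorded in Appendix~\ref{sec:zeta-t-properties}, in which case they can be cited directly.)

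Substituting the two closed forms into \eqref{eq:H-def} collapses $K(\xi)$ to
\[
-\frac{t''(\xi)}{t'(\xi)^5}-\frac{3\,t(\xi)t''(\xi)}{t'(\xi)^2}-\frac{\xi\,t'''(\xi)}{t'(\xi)^5}+\frac{\xi\,t''(\xi)^2}{t'(\xi)^6}.
\]
Differentiating the fundamental relation once gives $2\xi t''(\xi)/t'(\xi)+2t(\xi)t'(\xi)^3=1$; differentiating that once more produces a linear relation for $t'''(\xi)$, and solving it for $\xi t'''(\xi)/t'(\xi)^5$ and inserting the result into the collapsed expression causes every occurrence of $t''$, $tt''$, $t'''$, and $\xi(t'')^2$ to cancel in pairs, leaving $K(\xi)\equiv1$. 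There is essentially no analytic obstacle here: the one point needing a little care is the branch/endpoint bookkeeping in the quadrature step when $\xi\le0$ (where $t(\xi)<1$), and this is cleanly avoided by proving the identity for $\xi>0$ and then invoking the identity theorem; the remainder is finite algebra. As a consistency check, the large-$\xi$ asymptotics of $t$ and its derivatives from \eqref{eq:t-large-z} make the three terms of \eqref{eq:H-def} tend respectively to $\tfrac23$, $1$, and $-\tfrac23$. Once established, the lemma removes the explicit term from \eqref{eq:gppp-estimate}, yielding $g'''(x;\lambda)=O(1)$ uniformly in the inner region.
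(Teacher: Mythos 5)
Your proof is correct, but it takes a genuinely different route from the paper's. The paper never evaluates $n(\xi):=\int_0^1 t(\xi q^{2/3})\,\dd q$ in closed form: it uses integration by parts to express the weighted integral through $n$, eliminates $\xi$, $t''$, $t'''$ via \eqref{eq:t-zeta-ODE} to reach the factored form $K(\xi)-1=-(2n(\xi)t'(\xi)^3-1)(1+2t(\xi)^2)/(t(\xi)^2-1)$ in \eqref{eq:Hplus2}, then derives a linear first-order ODE for $K(\xi)-1$, integrates it explicitly, and fixes the integration constant to zero by the $\xi\to+\infty$ asymptotics of $t$, $t'$, and $n$ from \eqref{eq:t-large-z}. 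You instead compute the quadrature exactly: the substitutions $u=\xi q^{2/3}$ and $u=\zeta(\tau)$ give $n(\xi)=\tfrac12\xi^{-3/2}(t(\xi)^2-1)^{3/2}=1/(2t'(\xi)^3)$, which is precisely the statement $2n(\xi)t'(\xi)^3-1=0$ that annihilates the paper's factored form; with $n'(\xi)=-3t''(\xi)/(2t'(\xi)^4)$ the definition \eqref{eq:H-def} collapses to a rational expression in $t,t',t'',t''',\xi$, and I checked that inserting the relation obtained by differentiating $2\xi t''/t'+2tt'^3=1$ once more (in that form) does cancel everything except $1$, as you claim. What your approach buys is the elimination of both the ODE integration and the asymptotic determination of the constant — \eqref{eq:t-large-z} enters only as a consistency check — at the price of the closed-form evaluation, which the paper's argument never needs to notice; both arguments share the same final step of proving the identity for $\xi>0$ and extending to all $\xi>\zeta(-1)$ by analyticity. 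Two small remarks: the identities $n(\xi)=1/(2t'(\xi)^3)$ and its derivative are not recorded in Appendix~\ref{sec:zeta-t-properties} (only the asymptotics are), so you would need to include the short derivation you sketch; and your closed form is consistent with the paper's expansion $n(\xi)=\tfrac23(\tfrac43)^{1/2}\xi^{3/4}(1+O(\xi^{-1}))$, a reassuring cross-check. The conclusion you draw for \eqref{eq:gppp-estimate} is exactly how the paper uses the lemma.
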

\begin{proof}
    Denoting $n(\xi):=\int_0^1t(\xi q^{2/3})\,\dd q$, integration by parts shows that $\int_0^1t'(\xi q^{2/3})q^{2/3}\,\dd q=n'(\xi)=\frac{3}{2}\xi^{-1}(t(\xi)-n(\xi))$.  Hence  
    $K(\xi)$ involves only $n(\xi)$, $\xi$, $t(\xi)$, $t'(\xi)$, $t''(\xi)$, and $t'''(\xi)$.  Eliminating $\xi$ via  $\xi= t'(\xi)^2(t(\xi)^2-1)$ (see \eqref{eq:t-zeta-ODE} in Appendix~\ref{sec:zeta-t-properties}), as well as $t''(\xi)$ and $t'''(\xi)$ via its first two derivatives results in 
    \begin{equation}
    K(\xi)-1=-\frac{(2n(\xi)t'(\xi)^3-1)(1+2t(\xi)^2)}{t(\xi)^2-1}.
    \label{eq:Hplus2}
    \end{equation}
    Explicit differentiation and subsequent elimination of $n'(\xi)$, $\xi$, and $t''(\xi)$ shows that
    \begin{equation}
    \begin{split}
        \frac{\dd}{\dd \xi}(K(\xi)-1)&=\frac{3t(\xi)(2+t(\xi)^2)t'(\xi)(2n(\xi)t'(\xi)^3-1)}{(t(\xi)^2-1)^2}\\
        &=-3\frac{3+t(\xi)^2}{(t(\xi)^2-1)(1+2t(\xi)^2)}t'(\xi)(K(\xi)-1).
    \end{split}
    \end{equation}
    If $\xi>0$, then $t(\xi)>1$, so integration yields
    \begin{equation}
    K(\xi)-1=C\left(\frac{t(\xi)+1}{t(\xi)-1}\right)^2\ee^{5\arctan(\sqrt{2}t(\xi))/\sqrt{2}},\quad \xi>0.
    \end{equation}
    Letting $\xi\to+\infty$ so also $t(\xi)\to+\infty$ gives
    \begin{equation}
        \lim_{\xi\to+\infty}
        K(\xi)-1= C\ee^{5\pi/\sqrt{8}}.
    \end{equation}
    On the other hand, using the large-$\xi$ asymptotic of $t'(\xi)$ given in \eqref{eq:t-large-z} from Appendix~\ref{sec:zeta-t-properties}  and  also that 
\begin{equation}
\begin{split}
    n(\xi) = \frac{3}{2}\xi^{-3/2}\int_0^\xi t(w)w^{1/2}\,\dd w&=\frac{3}{2}\xi^{-3/2}\left(\frac{4}{9}\left(\frac{4}{3}\right)^{1/2}\xi^{9/4}(1+O(\xi^{-1}))\right)\\ &=\frac{2}{3}\left(\frac{4}{3}\right)^{1/2}\xi^{3/4}(1+O(\xi^{-1})),\quad\xi\to+\infty
    \end{split}
\end{equation}
as follows from the asymptotic behavior of $t(\xi)$ in \eqref{eq:t-large-z}, we see from \eqref{eq:Hplus2} that in fact $K(\xi)-1\to 0$ as $\xi\to+\infty$, and therefore the integration constant is $C=0$.  Thus $K(\xi)-1=0$ is an identity for $\xi>0$, however the original formula \eqref{eq:H-def} shows that $K(\xi)$ is analytic at $\xi=0$.  We conclude that $K(\xi)=1$ holds for all $\xi>\zeta(-1)$.
\end{proof}
Therefore the inner limit in \eqref{eq:gpp-inner-limit} coincides with the constant $g_0''(0)$ as defined in \eqref{eq:outer-limit}, and we also conclude from \eqref{eq:gppp-estimate} the following.
\begin{lemma}[Upper bound on $|g'''(x;\lambda)|$ in the inner region]
    Suppose $\V(x;\lambda)$ satisfies the hypotheses of Theorem~\ref{thm:Schroedinger-BS}.  If $\delta>0$ is sufficiently small, $|g'''(x;\lambda)|\lesssim 1$ holds for $\frac{1}{2}x_-(\lambda)<x<\delta$ and $\lambda>0$ sufficiently small.
    \label{lem:gppp-inner-bound}
\end{lemma}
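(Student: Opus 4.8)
The plan is to observe that essentially all of the work has already been done: the representation \eqref{eq:gppp-estimate} isolates the only potentially dangerous contribution to $g'''(x;\lambda)$ — the one carrying the factor $\lambda^{-1/2}$ — as a fixed constant multiple of $s'(x;\lambda)\,t'(\xi)^{-1}\lambda^{-1/2}K'(\xi)$, with everything else being $O(1)$ uniformly on the inner region. Since the preceding lemma establishes that $K(\xi)\equiv 1$ for $\xi>\zeta(-1)$, we have $K'(\xi)\equiv 0$, so this term vanishes identically and $g'''(x;\lambda)=O(1)$, which is the asserted bound $|g'''(x;\lambda)|\lesssim 1$.

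In more detail, I would first record that the constraint $\tfrac12 x_-(\lambda)<x<\delta$ keeps the inner variable $\xi$ in a range on which the earlier estimates are uniform. By \eqref{eq:inner-variable}, $x=x_-(\lambda)$ corresponds to $t(\xi)=-1$, i.e.\ $\xi=\zeta(-1)$; since $s(x_-(\lambda);\lambda)=-\sqrt{\lambda}$ and $s(\diamond;\lambda)$ is increasing and (by Lemma~\ref{lem:x} together with $s(x;\lambda)=xk(x;\lambda)$, $k(0;0)>0$) approximately linear near the origin, the condition $x>\tfrac12 x_-(\lambda)$ forces $t(\xi)=s(x;\lambda)/\sqrt{\lambda}$ to stay bounded above $-1$ (roughly $t(\xi)\gtrsim-\tfrac12$), hence $\xi$ bounded away from $\zeta(-1)$; consequently $t'(\xi)$, $t''(\xi)$, $t'''(\xi)$ and $(t(\xi)^2-1)^{-1}$ are all $O(1)$ there. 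The bound $x<\delta$ with $\delta$ small keeps $\lambda^{1/2}\langle\xi\rangle^{3/4}$ small, so Lemma~\ref{lem:I-expand} — and hence the control of $I(\xi;\lambda^{1/2})-1$, $I'(\xi;\lambda^{1/2})$, and so on — applies throughout. With this in hand, \eqref{eq:gppp-estimate} reads $g'''(x;\lambda)=c\,s'(x;\lambda)\,t'(\xi)^{-1}\lambda^{-1/2}K'(\xi)+O(1)$ with $c$ a constant and the $O(1)$ uniform on $\tfrac12 x_-(\lambda)<x<\delta$ as $\lambda\downarrow0$; invoking $K'\equiv0$ immediately yields $|g'''(x;\lambda)|\lesssim1$.

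The single point requiring care — and the step I would scrutinize most — is the claimed uniformity of the $O(1)$ error in \eqref{eq:gppp-estimate} down to $x=\tfrac12 x_-(\lambda)$: one must confirm that in differentiating \eqref{eq:gpp-formula} the terms discarded as $O(\lambda^{1/2}t'(\xi))$ are genuinely that small even near $\xi=\zeta(-1)$, and that after combining terms no uncancelled factor of $t'(\xi)^{-1}$, $(t(\xi)^2-1)^{-1}$, or a higher inverse power survives. Because $x>\tfrac12 x_-(\lambda)$ bounds $t(\xi)$ strictly away from its singular value $-1$, every such factor remains bounded, so this is a bookkeeping verification rather than a real obstacle; once it is in place, the identity $K\equiv1$ finishes the proof.
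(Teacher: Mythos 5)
Your proposal is correct and follows essentially the same route as the paper: the paper's argument for this lemma is precisely the derivation of \eqref{eq:gppp-estimate} with its uniform $O(1)$ remainder on $\tfrac{1}{2}x_-(\lambda)<x<\delta$, followed by the preceding lemma establishing $K(\xi)\equiv 1$, so that $K'(\xi)\equiv 0$ kills the only term carrying the factor $\lambda^{-1/2}$. Your additional remarks on keeping $\xi$ bounded away from $\zeta(-1)$ and on checking the uniformity of the discarded $O(\lambda^{1/2}t'(\xi))$ terms are exactly the bookkeeping the paper performs in the passage leading up to \eqref{eq:gppp-estimate}.
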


\subsection{Application to estimates of $Q(y;\lambda)$}
We now apply the bounds obtained on the Langer transformation to estimate the perturbation term $Q(y)$ given by \eqref{eq:Schroedinger-Q} and \eqref{eq:ZS-Q} in the Schr\"odinger and Zakharov-Shabat cases respectively.
\begin{proposition}[Estimate of $Q(y)$ for the Schr\"odinger equation]
    Suppose $\V(x;\lambda)$ satisfies the hypotheses of Theorem~\ref{thm:Schroedinger-BS}.  If $\lambda_\mathrm{max}>0$ is sufficiently small, there is a constant $C>0$ such that the function $Q(y;\lambda)$ defined by \eqref{eq:Schroedinger-Q} satisfies $|Q(y;\lambda)|\le C/(1+y^2)$ for all $y\in\mathbb{R}$ and $0<\lambda\le\lambda_\mathrm{max}$.
    \label{prop:Schrodinger-Q-estimate}
\end{proposition}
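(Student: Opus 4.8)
The plan is to substitute the explicit formula \eqref{eq:Schroedinger-Q} for $Q(y;\lambda)$ and estimate its two terms using the pointwise bounds on $g'$, $g''$, $g'''$ already established in Section~\ref{sec:derivatives}, after partitioning the real $x$-axis into an outer region $x\ge\delta$, an inner region $\tfrac12 x_-(\lambda)<x<\delta$, and the remaining half-line $x\le\tfrac12 x_-(\lambda)$, which will be reduced to the first two pieces by a reflection $x\mapsto -x$. Taking $\lambda_\mathrm{max}>0$ small enough that every lemma of Section~\ref{sec:derivatives} (each valid for sufficiently small $\lambda>0$) applies on $(0,\lambda_\mathrm{max}]$, all implied constants below are uniform in $\lambda\in(0,\lambda_\mathrm{max}]$. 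The arithmetic heart of the matter is that the growth exponent $p=p_+$ from Corollary~\ref{cor:outer} cancels out of \emph{both} terms of \eqref{eq:Schroedinger-Q}, producing exactly the decay rate $y^{-2}$ called for.

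For the outer region $x\ge\delta$, Corollary~\ref{cor:outer} gives $g'(x;\lambda)\gtrsim y^{(p-2)/(p+2)}$, $g''(x;\lambda)\lesssim y^{(p-6)/(p+2)}$, and $g'''(x;\lambda)\lesssim y^{(p-10)/(p+2)}$, whence
\[
\frac{g''(x;\lambda)^2}{g'(x;\lambda)^4}\lesssim y^{[2(p-6)-4(p-2)]/(p+2)}=y^{-2},\qquad
\frac{g'''(x;\lambda)}{g'(x;\lambda)^3}\lesssim y^{[(p-10)-3(p-2)]/(p+2)}=y^{-2},
\]
so $|Q(y;\lambda)|\lesssim y^{-2}$ there. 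By \eqref{eq:outer-limit} we have $y=g(x;\lambda)\ge\tfrac12 g_0(\delta)>0$ for $x\ge\delta$ and small $\lambda$, hence $1+y^2\lesssim y^2$ and $|Q(y;\lambda)|\lesssim(1+y^2)^{-1}$ on this range.

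For the inner region $\tfrac12 x_-(\lambda)<x<\delta$, Lemmas~\ref{lem:gp-lower-bound-inner}, \ref{lem:gpp-inner-bounded} and \ref{lem:gppp-inner-bound} give $g'(x;\lambda)\gtrsim 1$ and $|g''(x;\lambda)|,|g'''(x;\lambda)|\lesssim 1$, so $|Q(y;\lambda)|\lesssim 1$. Here $y=g(x;\lambda)$ lies between $g(x_-(\lambda);\lambda)=-2\sqrt{-b}$ and $g(\delta;\lambda)$; since $b=-\lambda/\phi(\lambda)\to 0$ by Lemma~\ref{lem:b} and $g(\delta;\lambda)\to g_0(\delta)$ by \eqref{eq:outer-limit}, there is a fixed $M$ with $|y|\le M$ on this region, so $|Q(y;\lambda)|\lesssim 1\lesssim(1+y^2)^{-1}$. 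Together with the outer estimate, this gives the asserted bound for every $x>\tfrac12 x_-(\lambda)$.

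It remains to treat $x\le\tfrac12 x_-(\lambda)$. Put $U(x;\lambda):=\V(-x;\lambda)$. Then $U$ again satisfies the hypotheses of Theorem~\ref{thm:Schroedinger-BS} (with $p_\pm$ and $\V_\pm$ swapped), the associated turning-point function $x\mapsto\lambda-U(x;\lambda)$ equals $x\mapsto\det(\mathbf{B}(-x;\lambda))$, the constant $b$ from \eqref{eq:general-b} is unchanged, and one checks from \eqref{eq:Langer-ODE}--\eqref{eq:LangerICs} that the corresponding Langer transformation is $g_U(x;\lambda)=-g(-x;\lambda)$, so that $g_U'=g'(-\cdot)$, $g_U''=-g''(-\cdot)$, $g_U'''=g'''(-\cdot)$. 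Since $-\tfrac34(g'')^2/(g')^4+\tfrac12 g'''/(g')^3$ is invariant under $g''\mapsto -g''$, the corresponding perturbation obeys $Q_U(y;\lambda)=Q(-y;\lambda)$. Applying the outer and inner estimates already proved to $U$ (whose turning points are $-x_+(\lambda)<0<-x_-(\lambda)$) yields $|Q_U(y;\lambda)|\lesssim(1+y^2)^{-1}$ for $x>-\tfrac12 x_+(\lambda)$, i.e.\ $|Q(y;\lambda)|\lesssim(1+y^2)^{-1}$ for $x<\tfrac12 x_+(\lambda)$. As $\tfrac12 x_-(\lambda)<\tfrac12 x_+(\lambda)$, the overlapping ranges $x>\tfrac12 x_-(\lambda)$ and $x<\tfrac12 x_+(\lambda)$ cover $\mathbb{R}$, and since $x\mapsto g(x;\lambda)$ is a bijection of $\mathbb{R}$ this proves the estimate for all $y\in\mathbb{R}$. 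No step is difficult given Section~\ref{sec:derivatives}; the only points requiring attention are the exponent cancellation above and the bookkeeping that the regions exhaust $\mathbb{R}$ with constants uniform in $\lambda\in(0,\lambda_\mathrm{max}]$.
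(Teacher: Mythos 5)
Your proof is correct and takes essentially the same route as the paper: the outer bounds of Corollary~\ref{cor:outer} (with the exponent cancellation giving $y^{-2}$) together with Lemmas~\ref{lem:gp-lower-bound-inner}, \ref{lem:gpp-inner-bounded}, \ref{lem:gppp-inner-bound} cover $x>\tfrac12 x_-(\lambda)$, and your reflection $U(x;\lambda)=\V(-x;\lambda)$ with $g_U(x;\lambda)=-g(-x;\lambda)$ is simply an explicit formalization of the paper's ``completely parallel analysis based on integration from $x=x_-(\lambda)$'' used to reach all $y\in\mathbb{R}$. Your write-up is in fact somewhat more detailed than the paper's own brief argument, but the content is the same.
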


\begin{proof}
    For $y>-2\sqrt{-b}(1-\delta)$, with $0<\delta<1$ fixed, this follows from Corollary~\ref{cor:outer}, and from Lemmas~\ref{lem:gp-lower-bound-inner}, \ref{lem:gpp-inner-bounded}, and \ref{lem:gppp-inner-bound}.  These results were obtained by analyzing the Langer transformation $y=g(x;\lambda)$ based on integration from $x=x_+(\lambda)$ and $y=2\sqrt{-b}$.  However, the definition \eqref{eq:b-of-lambda} of $b$ implies that a completely parallel analysis applies based on integration instead from $x=x_-(\lambda)$ and $y=-2\sqrt{-b}$.  This extends the result to $y\in\mathbb{R}$.
\end{proof}

\begin{proposition}[Estimate of $Q(y)$ for the Zakharov-Shabat system]
    Under the hypotheses of Theorem~\ref{thm:ZS-BS}, for each $\lambda_\mathrm{max}\in (\min r_+(\diamond),1)$ there exists a constant $C$ such that the function $Q(y;\lambda)$ defined by \eqref{eq:ZS-Q} satisfies $|Q(y;\lambda)|\le C/(1+y^2)$ for all $y\in\mathbb{R}$ and $\min r_+(\diamond)<\lambda\le\lambda_\mathrm{max}$.
    \label{prop:ZS-Q-estimate}
\end{proposition}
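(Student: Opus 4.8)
The plan is to reduce the bound almost entirely to Proposition~\ref{prop:Schrodinger-Q-estimate} via Lemma~\ref{lem:ZS-Schroedinger}, treating the extra term in \eqref{eq:ZS-Q} and the change of independent variable \eqref{eq:ZS-potential} as a controllable correction. Let $\V(\widetilde x;\widetilde\lambda)$ be the potential produced from $r_\pm$ by \eqref{eq:ZS-potential}, which by Lemma~\ref{lem:ZS-Schroedinger} satisfies the hypotheses of Theorem~\ref{thm:Schroedinger-BS} with $p_\pm=0$; let $\widehat g(\widetilde x;\widetilde\lambda)$ be its Langer transformation and $\widehat Q$ the associated perturbation coefficient \eqref{eq:Schroedinger-Q}. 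By the differential identity $\sqrt{\pm\det(\mathbf B(x;\lambda))}\,\dd x=\sqrt{\pm(\V(\widetilde x;\widetilde\lambda)-\widetilde\lambda)}\,\dd\widetilde x$ recorded just before Lemma~\ref{lem:ZS-Schroedinger}, the fact that the constant $b$ defined by \eqref{eq:b-of-lambda} is the same in both problems, and uniqueness of the solution of \eqref{eq:Langer-ODE}--\eqref{eq:LangerICs}, the Zakharov-Shabat Langer transformation $g$ appearing in \eqref{eq:ZS-Q} is the composition $g(x;\lambda)=\widehat g(\widetilde x(x;\lambda);\widetilde\lambda)$, where $\widetilde x'(x;\lambda)=\sqrt{R_+(x;\lambda)}$ and $R_+(x;\lambda)=\lambda-r_-(x)$. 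For $\min r_+(\diamond)<\lambda\le\lambda_\mathrm{max}<1$ the hypotheses of Theorem~\ref{thm:ZS-BS} make $R_+(\diamond;\lambda)$ a $C^5(\mathbb R)$ function with $0<\min r_+(\diamond)-\max r_-(\diamond)\le R_+(x;\lambda)\le 2$ uniformly and $R_+'(x;\lambda)=O(|x|^{-1})$, $R_+''(x;\lambda)=O(|x|^{-2})$ as $|x|\to\infty$; hence $x\mapsto\widetilde x$ is a $C^5$ diffeomorphism of $\mathbb R$ with $\widetilde x\asymp x$.

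Next I would split \eqref{eq:ZS-Q} into the Schwarzian combination $-\tfrac34 g''(x)^2/g'(x)^4+\tfrac12 g'''(x)/g'(x)^3=\{g,x\}/(2g'(x)^2)$, with $\{g,x\}:=g'''/g'-\tfrac32(g''/g')^2$, plus the remainder $-g'(x)^{-2}\big(R_+''/(2R_+)-3(R_+')^2/(4R_+^2)\big)$. The cocycle law $\{g,x\}=\{\widehat g,\widetilde x\}(\widetilde x')^2+\{\widetilde x,x\}$ together with $g'(x)^2=\widehat g'(\widetilde x)^2(\widetilde x')^2$ gives
\begin{equation*}
    Q(y;\lambda)=\widehat Q(y;\widetilde\lambda)+\frac{1}{2g'(x)^2}\Bigg(\{\widetilde x,x\}-\bigg[\frac{R_+''(x;\lambda)}{R_+(x;\lambda)}-\frac{3R_+'(x;\lambda)^2}{2R_+(x;\lambda)^2}\bigg]\Bigg),\qquad y=\widehat g(\widetilde x;\widetilde\lambda).
\end{equation*}
Since $\{\widetilde x,x\}=R_+''/(2R_+)-\tfrac58(R_+')^2/R_+^2$, the quantity in large parentheses is an explicit rational expression in $R_+,R_+',R_+''$ over powers of $R_+$ which, as $R_+$ is bounded away from $0$, is bounded on compact $x$-sets and is $O(|x|^{-2})$ as $|x|\to\infty$. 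For the main term, Proposition~\ref{prop:Schrodinger-Q-estimate} gives $|\widehat Q(y;\widetilde\lambda)|\le C/(1+y^2)$ when $\widetilde\lambda$ is small; for $\widetilde\lambda$ in a compact subset of $(0,\V_\pm)$ with $\V_\pm=1-\min r_+(\diamond)$ (which is automatic since $\widetilde\lambda\le\lambda_\mathrm{max}-\min r_+(\diamond)<\V_\pm$) the potential $\V$ is bounded with turning points bounded apart, the classical Langer configuration, and the same estimate follows from Corollary~\ref{cor:outer} (with $p=0$, whose computation applies unchanged for fixed $\widetilde\lambda$) together with smoothness of $\widehat g$ and compactness of the parameter range. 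Hence $|\widehat Q(y;\widetilde\lambda)|\le C/(1+y^2)$ over the whole range of $\widetilde\lambda$.

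Finally I would feed in the estimates of Section~\ref{sec:derivatives} with $p=0$. In the inner region $\tfrac12 x_-(\lambda)<x<\delta$, $g'(x)=\widehat g'(\widetilde x)\sqrt{R_+}\gtrsim 1$ by Lemma~\ref{lem:gp-lower-bound-inner} and $\sqrt{R_+}\gtrsim 1$, the correction is $\lesssim 1$, and $y$ is bounded, so $|Q(y;\lambda)|\lesssim 1$; in the outer region $x\ge\delta$, Corollary~\ref{cor:outer} with $p=0$ gives $g'(x)\gtrsim\widehat g'(\widetilde x)\gtrsim y^{-1}$ while \eqref{eq:x-y} with $p=0$ gives $x\asymp\widetilde x\asymp y^{2}$, so the correction is $\lesssim y^{2}\cdot y^{-4}=y^{-2}$; as in the proof of Proposition~\ref{prop:Schrodinger-Q-estimate}, the parallel analysis based at $x_-(\lambda)$ rather than $x_+(\lambda)$ extends this to all $y\in\mathbb R$. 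Combining the regions gives $|Q(y;\lambda)|\le C/(1+y^2)$. The main obstacle is the outer-region bound on the correction term: one must check that the $O(|x|^{-2})$ decay of $R_+'$ and $R_+''$ defeats the growth $1/g'(x)^2\lesssim y^{2}$, which works precisely because the associated potential has growth exponent $p=0$, forcing $x\asymp y^{2}$.
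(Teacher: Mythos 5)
Your proposal is correct and takes essentially the same route as the paper: reduce to the Schr\"odinger setting via Lemma~\ref{lem:ZS-Schroedinger}, observe that the first two terms of \eqref{eq:ZS-Q} are exactly \eqref{eq:Schroedinger-Q} for the induced potential (your Schwarzian-cocycle computation with $\widetilde{x}'=\sqrt{R_+}$ reproduces the paper's displayed identity, whose correction is $7R_+'^2/(16R_+^3)-R_+''/(4R_+^2)$ against $\widetilde{g}'^{\,-2}$), and then bound the leftover $R_+$-terms by $O(1)$ for bounded $y$ (lower bound on $g'$, $R_+$ bounded below, $C^5$ smoothness) and by $O(y^{-2})$ for large $y$ (using $R_+'=O(x^{-1})$, $R_+''=O(x^{-2})$ and Corollary~\ref{cor:outer} with $p=0$, so $x\asymp y^2$ and $g'^{-2}\lesssim y^2$), with the symmetric argument based at $x_-(\lambda)$ covering $y<0$. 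The only substantive difference is your caveat about the range of $\widetilde{\lambda}$: the paper simply cites Proposition~\ref{prop:Schrodinger-Q-estimate}, which as stated requires $\lambda_\mathrm{max}$ small, so your flag is legitimate, though your patch for $\widetilde{\lambda}$ bounded away from zero (fixed-$\lambda$ Langer regime handled by compactness and the outer asymptotics) is only sketched rather than proved.
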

\begin{proof}
    We first apply Lemma~\ref{lem:ZS-Schroedinger} to write the Langer transformation in the form $y=g(x;\lambda)=\widetilde{g}(\widetilde{x};\lambda)$ where $\widetilde{x}=\widetilde{x}(x;\lambda)$, and note that using $\dd\widetilde{x}/\dd x = R_+(x;\lambda)$ the formula \eqref{eq:ZS-Q} becomes
    \begin{equation}
        Q(y;\lambda)=-\frac{3\widetilde{g}''(\widetilde{x};\lambda)^2}{4\widetilde{g}'(\widetilde{x};\lambda)^4}+\frac{\widetilde{g}'''(\widetilde{x};\lambda)}{2\widetilde{g}'(\widetilde{x};\lambda)^3} +\frac{1}{\widetilde{g}'(\widetilde{x};\lambda)^2}\left[\frac{7R_+'(x;\lambda)^2}{16R_+(x;\lambda)^3}-\frac{R_+''(x;\lambda)}{4R_+(x;\lambda)^2}\right].
    \end{equation}
    But Proposition~\ref{prop:Schrodinger-Q-estimate} applies to the first two terms, since they have the form of \eqref{eq:Schroedinger-Q}, and $\widetilde{g}(\widetilde{x};\lambda)$ is associated with an energy-dependent potential \eqref{eq:ZS-potential} that satisfies the hypotheses of Theorem~\ref{thm:Schroedinger-BS}.  For the remaining terms, if $y$ and hence also $x$ is bounded, then $\widetilde{g}'(\widetilde{x};\lambda)$ is bounded below (Lemma~\ref{lem:gp-lower-bound-inner}) and $R_+(x;\lambda)\ge\sqrt{\min r_+(\diamond)-\max r_-(\diamond)}>0$ is of class $C^5$, so the terms are bounded uniformly as $\lambda\downarrow \min r_+(\diamond)$.  On the other hand, if $y$ becomes large, we can use the fact that $R_+'(x;\lambda)=O(x^{-1})$ and $R_+''(x;\lambda)=O(x^{-2})$ uniformly with respect to $\lambda$ and Corollary~\ref{cor:outer} to see that $O(\widetilde{g}'(\widetilde{x};\lambda)^{-2}x^{-2})=O(y^{-2})$.
\end{proof}
The fact that the bound on $Q(y)$ is essentially the same in both cases allows for a streamlined treatment of the perturbation problem, which we address next.

\section{Analysis of the perturbed Weber system}
\label{sec:Weber-analysis}
Applying Proposition~\ref{prop:Schrodinger-Q-estimate} or Proposition~\ref{prop:ZS-Q-estimate} as applicable, in both special cases of the general system \eqref{eq:general-system} we have therefore arrived at an equivalent system having a universal form:
\begin{equation}
    \epsilon\frac{\dd\mathbf{u}}{\dd y} =\begin{bmatrix}0 & 1\\b^2+\frac{1}{4}y^2 +\epsilon^2 Q(y;\lambda) & 0\end{bmatrix}\mathbf{u},\quad |Q(y;\lambda)|\lesssim\frac{1}{1+y^2}.
    \label{eq:perturbed-Weber-system}
\end{equation}
This system can be viewed as a perturbation of the model system \eqref{eq:Weber-system}, for which a fundamental solution matrix is given by 
\begin{equation}
    \mathbf{U}_0(y;b,\epsilon):=\begin{bmatrix}
        U(\epsilon^{-1}b,\epsilon^{-1/2}y) & V(\epsilon^{-1}b,\epsilon^{-1/2}y)\\
        \epsilon^{1/2}U_z(\epsilon^{-1}b,\epsilon^{-1/2}y) & \epsilon^{1/2}V_z(\epsilon^{-1}b,\epsilon^{-1/2}y)
    \end{bmatrix},
\end{equation}  
involving the parabolic cylinder functions $U(a,z)$ and $V(a,z)$.  To remove the dominant terms in \eqref{eq:perturbed-Weber-system} we make the substitution
\begin{equation}
    \mathbf{u}(y)=\mathbf{U}_0(y;b,\epsilon)\mathbf{v}(y),
\end{equation}
leading to 
\begin{equation}
    \frac{\dd\mathbf{v}}{\dd y}=\epsilon\mathbf{U}_0(y;b,\epsilon)^{-1}\begin{bmatrix}0&0\\Q(y;\lambda) & 0\end{bmatrix}\mathbf{U}_0(y;b,\epsilon)\mathbf{v}.
\end{equation}
Since $\det(\mathbf{U}_0(y;b,\epsilon))=\epsilon^{1/2}\W[U,V]$, where $\W[U,V]$ denotes the Wronskian with value $\sqrt{2/\pi}$ according to \eqref{eq:Weber-Wronskian}, this system can be written as
\begin{equation}
    \frac{\dd\mathbf{v}}{\dd y} = \epsilon^{1/2}\sqrt{\frac{\pi}{2}}Q(y;\lambda)\mathbf{K}(y;\epsilon)\mathbf{v},
\label{eq:v-system}
\end{equation}
with coefficient matrix
\begin{equation}
    \mathbf{K}(y;\epsilon):=\begin{bmatrix}-U(\epsilon^{-1}b,\epsilon^{-1/2}y)V(\epsilon^{-1}b,\epsilon^{-1/2}y) & -V(\epsilon^{-1}b,\epsilon^{-1/2}y)^2\\
    U(\epsilon^{-1}b,\epsilon^{-1/2}y)^2 & U(\epsilon^{-1}b,\epsilon^{-1/2}y)V(\epsilon^{-1}b,\epsilon^{-1/2}y)\end{bmatrix}.
\end{equation}

Recall that for both special cases of \eqref{eq:general-system} the gauge matrix relating $\mathbf{w}$ and $\mathbf{u}$ has unit determinant, and that under the Langer transformation $y=g(x;\lambda)$ we have $x\to\pm\infty$ if and only if $y\to\pm\infty$.  Therefore, any nontrivial solution $\mathbf{w}=\mathbf{w}^\pm(x)$ of \eqref{eq:general-system} decaying to $\mathbf{0}$ as $x\to\pm\infty$ corresponds to a solution $\mathbf{u}=\mathbf{u}^\pm(y)$ of \eqref{eq:perturbed-Weber-system} decaying as $y\to\pm\infty$.  We now construct such a solution $\mathbf{u}^+(y)$.  

Since $U(a,z)$ rapidly decays while $V(a,z)$ rapidly grows as $z\to+\infty$, by choice of normalization constant, the vector function $\mathbf{v}=\mathbf{v}^+(y)$ corresponding to $\mathbf{u}=\mathbf{u}^+(y)$ decaying as $y\to+\infty$ should satisfy the boundary condition $\mathbf{v}^+(y)\to [1,0]^\top$ as $y\to+\infty$.  We build in this boundary condition by writing $\mathbf{v}^+(y)=[1,0]^\top + \mathbf{r}^+(y)$.  Assuming sufficiently rapid decay of $\mathbf{r}^+(y)$ as $y\to+\infty$, we substitute and integrate \eqref{eq:v-system} from a variable point $y\in\mathbb{R}$ to $+\infty$ and obtain
\begin{equation}
    \mathbf{r}^+(y)=-\epsilon^{1/2}\sqrt{\frac{\pi}{2}}\int_y^{+\infty}Q(\bar{y};\lambda)\mathbf{f}(\bar{y};\epsilon)\,\dd \bar{y} -\epsilon^{1/2}\sqrt{\frac{\pi}{2}}\int_y^{+\infty}Q(\bar{y};\lambda)\mathbf{K}(\bar{y};\epsilon)\mathbf{r}^+(\bar{y})\,\dd \bar{y},
\end{equation}
where $\mathbf{f}(y;\epsilon):=\mathbf{K}(y;\epsilon)[1,0]^\top$.  Since $K_{12}(y;\epsilon)$ grows rapidly as $y\to+\infty$, for this integral equation to make sense in $L^\infty(0,\infty)$ it is necessary to build in a corresponding rate of decay in the second component of $\mathbf{r}^+(y)$.  The correct rate is given either by the weight $W(\epsilon^{-1}b,\epsilon^{-1/2}y)^{-1}$ defined in \eqref{eq:PC-weight} or $Y(\epsilon^{-1}b,\epsilon^{-1/2}y)^{-1}$ defined in \eqref{eq:PC-weight-large-a}, depending respectively on whether $a=\epsilon^{-1}b<0$ is bounded or not (of size as large as $\epsilon^{-1}$) as $\epsilon\to 0$.  See Appendix~\ref{sec:PC-bounds}.  Fix $M>0$ sufficiently large.

\subsection{The case of low-lying eigenvalues:  $-\frac{1}{2}M^2\le a<0$}
Introducing the weight $W(a,z)$ defined by \eqref{eq:PC-weight}, we write $\mathbf{r}^+(y)=\mathrm{diag}(1,W(\epsilon^{-1}b,\epsilon^{-1/2}y)^{-1})\widetilde{\mathbf{r}}^+(y)$ and seek $\widetilde{\mathbf{r}}^+(y)$ in the space $L^\infty(0,\infty)$.  The corresponding integral equation satisfied by $\widetilde{\mathbf{r}}^+(y)$ reads
\begin{equation}
    \widetilde{\mathbf{r}}^+(y)=-\epsilon^{1/2}\sqrt{\frac{\pi}{2}}\int_y^{+\infty}Q(\bar{y};\lambda)\widetilde{\mathbf{f}}(y,\bar{y};\epsilon)\,\dd \bar{y} -\epsilon^{1/2}\sqrt{\frac{\pi}{2}}\int_y^{+\infty}Q(\bar{y};\lambda)\widetilde{\mathbf{K}}(y,\bar{y};\epsilon)\widetilde{\mathbf{r}}^+(\bar{y})\,\dd \bar{y},   
    \label{eq:zplus-tilde-low-lying-integral-equation}
\end{equation}
where with $a=\epsilon^{-1}b$ and $z=\epsilon^{-1/2}y$, $\bar{z}=\epsilon^{-1/2}\bar{y}$,
\begin{equation}
    \widetilde{\mathbf{f}}(y,\bar{y};\epsilon):=\begin{bmatrix}
        -U(a,\bar{z})V(a,\bar{z})\\W(a,z)U(a,\bar{z})^2
    \end{bmatrix},\quad\widetilde{\mathbf{K}}(y,\bar{y};\epsilon):=\begin{bmatrix}-U(a,\bar{z})V(a,\bar{z}) & -W(a,\bar{z})^{-1}V(a,\bar{z})^2\\
    W(a,z)U(a,\bar{z})^2 & W(a,z)W(a,\bar{z})^{-1}U(a,\bar{z})V(a,\bar{z})\end{bmatrix}.
\end{equation}
Since $y\le \bar{y}$ so also $z\le \bar{z}$, and since $W(a,z)$ is positive and nondecreasing in $z$, we can replace $W(a,z)$ with $W(a,\bar{z})$ and apply Lemma~\ref{lem:PC-bound-bounded-a} in the setting that $a=\epsilon^{-1}b<0$ is bounded below by a fixed value $-\frac{1}{2}M^2<0$ and that $\bar{y}\ge 0$ to see that both elements of $\widetilde{\mathbf{f}}(y,\bar{y};\epsilon)$ and all four elements of $\widetilde{\mathbf{K}}(y,\bar{y};\epsilon)$ are bounded in absolute value by a multiple of $(1+\bar{z}^2)^{-1/2}$.  Using the $L^\infty(0,\infty)$ norm $\sup_{y>0}\|\widetilde{\mathbf{r}}^+(y)\|$ subordinate to the pointwise norm on $\mathbb{C}^2$
\begin{equation}
    \|\mathbf{c}\|:=\max\{|c_1|,|c_2|\},\quad \text{implying}\quad \|\mathbf{Ac}\|\le \max\{|A_{11}|+|A_{12}|,|A_{21}|+|A_{22}|\}\|\mathbf{c}\|,
\end{equation}
we easily obtain the inequality
\begin{equation}
    \sup_{y>0}\|\widetilde{\mathbf{r}}^+(y)\|\lesssim\epsilon^{1/2}\sqrt{\frac{\pi}{2}}\int_0^{+\infty }\frac{\dd \bar{y}}{(1+\bar{y}^2)\sqrt{1+\epsilon^{-1}\bar{y}^2}}\left(1+\sup_{y>0}\|\widetilde{\mathbf{r}}^+(y)\|\right),
\end{equation}
where we have used $|Q(y;\lambda)|\lesssim (1+y^2)^{-1}$.
An estimate of the integral is the following:
\begin{equation}
    \int_0^{+\infty}\frac{\dd \bar{y}}{(1+\bar{y}^2)\sqrt{1+\epsilon^{-1}\bar{y}^2}}=\epsilon^{1/2}\int_0^{+\infty}\frac{\dd \bar{y}}{(1+\bar{y}^2)\sqrt{\epsilon + \bar{y}^2}}\lesssim\epsilon^{1/2}\log(\epsilon^{-1}),\quad\epsilon\to 0,
\end{equation}
as is easily seen by splitting up the integral at $\bar{y}=1$.  It therefore follows that there exists a unique solution of the integral equation \eqref{eq:zplus-tilde-low-lying-integral-equation} in $L^\infty(0,\infty)$ satisfying
$\sup_{y>0}\|\widetilde{\mathbf{r}}^+(y)\|\lesssim \epsilon\log(\epsilon^{-1})$.

We now show that the second component is smaller:  $\sup_{y>0}|\widetilde{r}^+_2(y)|\lesssim\epsilon$.  This comes from the fact that the second component of the forcing term satisfies
\begin{equation}
\begin{split}
\left|\int_y^{+\infty}Q(\bar{y};\lambda)\widetilde{f}_2(y,\bar{y};\epsilon)\,\dd \bar{y}\right|&\lesssim W(\epsilon^{-1}b,\epsilon^{-1/2}y)\int_y^{+\infty}|Q(\bar{y};\lambda)|U(\epsilon^{-1}b,\epsilon^{-1/2}\bar{y})^2\,\dd \bar{y}\\ &\lesssim W(\epsilon^{-1}b,\epsilon^{-1/2}y)\int_y^{+\infty}U(\epsilon^{-1}b,\epsilon^{-1/2}\bar{y})^2\,\dd \bar{y},
\end{split}
\end{equation}
where we used the estimate $|Q(y;\lambda)|\lesssim (1+y^2)^{-1}\lesssim 1$. If $0\le \epsilon^{-1/2}y\le M$, then $W(\epsilon^{-1}b,\epsilon^{-1/2}y) = \ee^{M^2/2}M^{2b/\epsilon}\lesssim 1$ since $b=O(\epsilon)$ in the low-lying case, so
\begin{equation}
    \left|\int_y^{+\infty}Q(\bar{y};\lambda)\widetilde{f}_2(y,\bar{y};\epsilon)\,\dd \bar{y}\right|\lesssim\int_0^{+\infty}U(\epsilon^{-1}b,\epsilon^{-1/2}\bar{y})^2\,\dd \bar{y} = 
    \epsilon^{1/2}\int_0^{+\infty}U(\epsilon^{-1}b,\bar{z})^2\,\dd \bar{z} = O(\epsilon^{1/2}),
\end{equation}
using again that $b=O(\epsilon)$.  On the other hand, if $\epsilon^{-1/2}y>M$, then also $\epsilon^{-1/2}\bar{y}>M$, so using \eqref{eq:PC-weight} and Lemma~\ref{lem:PC-bound-bounded-a} gives
\begin{equation}
\begin{split}
    \left|\int_y^{+\infty}Q(\bar{y};\lambda)\widetilde{f}_2(y,\bar{y};\epsilon)\,\dd \bar{y}\right|&\lesssim
    W(\epsilon^{-1}b,\epsilon^{-1/2}y)\int_y^{+\infty}\frac{\dd \bar{y}}{W(\epsilon^{-1}b,\epsilon^{-1/2}\bar{y})\sqrt{1+\epsilon^{-1} \bar{y}^2}}\\ &=\ee^{y^2/(2\epsilon)}\left(\frac{y^2}{\epsilon}\right)^{b/\epsilon}\int_y^{+\infty}\ee^{-\bar{y}^2/(2\epsilon)}\left(\frac{\bar{y}^2}{\epsilon}\right)^{-b/\epsilon}\frac{\dd \bar{y}}{\sqrt{1+\epsilon^{-1} \bar{y}^2}}\\
    &\lesssim \ee^{y^2/(2\epsilon)}\left(\frac{y^2}{\epsilon}\right)^{b/\epsilon}\int_y^{+\infty}\ee^{-\bar{y}^2/(2\epsilon)}\left(\frac{\bar{y}^2}{\epsilon}\right)^{-b/\epsilon-1/2}\,\dd \bar{y}\\
    &=\epsilon^{1/2}\ee^{y^2/(2\epsilon)}\left(\frac{y^2}{\epsilon}\right)^{b/\epsilon}\int_{\epsilon^{-1/2}y}^{+\infty}\ee^{-\bar{z}^2/2}\bar{z}^{-2b/\epsilon-1}\,\dd \bar{z}.
    \end{split}
\end{equation}
By repeated integration by parts, the resulting integral has a complete asymptotic expansion with respect to the sequence of gauge functions $\{\ee^{-z^2/2}z^{-2b/\epsilon-2-2k}\}_{k=0}^\infty$ as $z\to+\infty$, where $z=\epsilon^{-1/2}y$.  In particular, for $\epsilon^{-1/2}y>M$, we can estimate using just the leading term and obtain
\begin{equation}
    \left|\int_y^{+\infty}Q(\bar{y};\lambda)\widetilde{f}_2(y,\bar{y};\epsilon)\,\dd\bar{y}\right|\lesssim\epsilon^{1/2}\left(\frac{y^2}{\epsilon}\right)^{-1} =O(\epsilon^{1/2}).
\end{equation}
In both cases, we obtain a contribution to $\widetilde{r}_2^+(y)$ from the forcing term that is uniformly $O(\epsilon)$ for $y\ge 0$.  This dominates the contribution from the second row of the kernel $\widetilde{\mathbf{K}}(y,\bar{y};\epsilon)$; indeed iterating with the a priori estimate $\sup_{y>0}\|\widetilde{\mathbf{r}}^+(y)\|=O(\epsilon\log(\epsilon^{-1}))$ gives a substantially smaller contribution proportional to $\epsilon^2\log(\epsilon^{-1})^2$. 

From \eqref{eq:PC-weight} for $z=0$, we have $r^+_1(0)=\widetilde{r}^+_1(0)=O(\epsilon\log(\epsilon^{-1}))$ while $r^+_2(0)=W(a,0)^{-1}\widetilde{r}^+_2(0)=\ee^{-M^2/2}M^{-2a}\widetilde{r}_2^+(0) = O(\epsilon)$ because $0\le -2a\le M^2$, all estimates being uniform for the indicated range of $a$.

\subsection{The case of eigenvalues large compared to $\epsilon$:  $a\le -\frac{1}{2}M^2<0$}
In this complementary case, we introduce instead the alternate weight $Y(a,z)$ defined by \eqref{eq:PC-weight-large-a} via $\mathbf{r}^+(y)=\mathrm{diag}(1,Y(\epsilon^{-1}b,\epsilon^{-1/2}y)^{-1})\widetilde{\mathbf{r}}^+(y)$, obtaining again \eqref{eq:zplus-tilde-low-lying-integral-equation} with modified definitions for $\widetilde{\mathbf{f}}(y,\bar{y};\epsilon)$ and $\widetilde{\mathbf{K}}(y,\bar{y};\epsilon)$:
\begin{equation}
    \widetilde{\mathbf{f}}(y,\bar{y};\epsilon):=\begin{bmatrix}
        -U(a,\bar{z})V(a,\bar{z})\\Y(a,z)U(a,\bar{z})^2
    \end{bmatrix},\quad\widetilde{\mathbf{K}}(y,\bar{y};\epsilon):=\begin{bmatrix}-U(a,\bar{z})V(a,\bar{z}) & -Y(a,\bar{z})^{-1}V(a,\bar{z})^2\\
    Y(a,z)U(a,\bar{z})^2 & Y(a,z)Y(a,\bar{z})^{-1}U(a,\bar{z})V(a,\bar{z})\end{bmatrix}.
\end{equation}
The weight $Y(a,z)$ is still positive and nondecreasing in $z$, so we may follow similar reasoning as in the low-lying case.
Since now we have $a=\epsilon^{-1}b\le -\frac{1}{2}M^2<0$, we apply Lemma~\ref{lem:PC-bound-large-a} from Appendix~\ref{sec:PC-bounds} to obtain
\begin{equation}
    \sup_{y>0}\|\widetilde{\mathbf{r}}^+(y)\|\lesssim\epsilon^{1/2}\sqrt{\frac{\pi}{2}}\int_0^{+\infty}\frac{\dd \bar{y}}{(1+\bar{y}^2)(-a)^{1/2}(1+|\bar{t}|)^{1/3}|\zeta(\bar{t})|^{1/2}}\left(1+\sup_{y>0}\|\widetilde{\mathbf{r}}^+(y)\|\right),
\end{equation}
where $a=\epsilon^{-1}b$, $\bar{t}=\bar{z}/\sqrt{-4a}=\bar{y}/\sqrt{-4b}$, and $\zeta(t)$ is the increasing function of $t$ defined in Appendix~\ref{sec:zeta-t-properties} with $\zeta(1)=0$ and $\zeta(t)=(\frac{3}{4})^{2/3}t^{4/3}(1+O(t^{-1}))$ as $t\to+\infty$.  By changing the integration variable to $\bar{t}$ we get
\begin{equation}
\begin{split}
    \int_0^{+\infty}\frac{\dd \bar{y}}{(1+\bar{y}^2)(-a)^{1/2}(1+|\bar{t}|)^{1/3}|\zeta(\bar{t})|^{1/2}}&=2\epsilon^{1/2}\int_0^{+\infty}\frac{\dd \bar{t}}{(1-4b\bar{t}^2)(1+|\bar{t}|)^{1/3}|\zeta(\bar{t})|^{1/2}}\\ &\lesssim \epsilon^{1/2}\log(\ee-b^{-1}),
\end{split}
\label{eq:integral-a-large}
\end{equation}
where the last estimate can be seen by splitting the integral at $\bar{t}=2$ and using $(1+\bar{t})^{-1/3}|\zeta(\bar{t})|^{-1/2}\lesssim 1/\bar{t}$ for $\bar{t}\ge 2$.  In other words, the integral in \eqref{eq:integral-a-large} is proportional to $\epsilon^{1/2}$ when $b<0$ is bounded away from zero but more generally is proportional to $\epsilon^{1/2}\log(-b^{-1})$ when $b<0$ becomes small.  Since the lower bound for $-b$ under the assumption $a\le -\frac{1}{2}M^2<0$ is proportional to $\epsilon$, we conclude that the estimate $\sup_{y>0}\|\widetilde{\mathbf{r}}^+(y)\|\lesssim \epsilon\log(\epsilon^{-1})$ holds in this case exactly as in the case of low-lying eigenvalues.

It is also true that when $a\le -\frac{1}{2}M^2<0$, we have the sharper estimate $\sup_{y>0}|\widetilde{r}^+_2(y)|\lesssim\epsilon$, just as in the low-lying case.  For the same reason as in that case the claim reduces to the analysis of the second component of the forcing function, which now involves the weight function $Y(a,z)$ defined in \eqref{eq:PC-weight-large-a}:
\begin{equation}
\begin{split}
\left|\int_y^{+\infty}Q(\bar{y};\lambda)\widetilde{f}_2(y,\bar{y};\epsilon)\,\dd \bar{y}\right| &\lesssim Y(a,z)\int_y^{+\infty}|Q(\bar{y};\lambda)|U(a,\bar{z})^2\,\dd \bar{y}\\
    &\lesssim Y(a,z)\int_y^{+\infty}U(a,\bar{z})^2\,\dd \bar{y}\\
    &\lesssim Y(a,z)\int_y^{+\infty}\frac{\dd \bar{y}}{Y(a,\bar{z})(-a)^{1/2}(1+|\bar{t}|)^{1/3}|\zeta(\bar{t})|^{1/2}},
    \end{split}
    \label{eq:before-simplify-Y}
\end{equation}
where as before $a=\epsilon^{-1}b$, $z=\epsilon^{-1/2}y$, $\bar{t}=\bar{y}/\sqrt{-4b}$, and we used $|Q(y;\lambda)|\lesssim (1+y^2)^{-1}\lesssim 1$ and Lemma~\ref{lem:PC-bound-large-a}.  If $0\le y\le \sqrt{-4b}$, then $Y(a,z)/Y(a,\bar{z})=\ee^{8a\zeta_+(\bar{t})^{3/2}/3}$, and after changing variables from $\bar{y}$ to $\bar{t}$,
\begin{equation}
\begin{split}
\left|\int_y^{+\infty}Q(\bar{y};\lambda)\widetilde{f}_2(y,\bar{y};\epsilon)\,\dd y'\right| &\lesssim 2\epsilon^{1/2}\int_{y/\sqrt{-4b}}^{+\infty}\frac{\ee^{8a\zeta_+(\bar{t})^{3/2}/3}\,\dd \bar{t}}{(1+\bar{t})^{1/3}|\zeta(\bar{t})|^{1/2}}\\
&\le 2\epsilon^{1/2}\int_0^{+\infty}\frac{\ee^{-4M^2\zeta_+(\bar{t})^{3/2}/3}\,\dd \bar{t}}{(1+\bar{t})^{1/3}|\zeta(\bar{t})|^{1/2}},
\end{split}
\end{equation}
where we used $a\le -\frac{1}{2}M^2$.  Since the last integral is convergent and depends on $M$ only, this estimate is $O(\epsilon^{1/2})$.  A similar computation extends the same estimate to hold for $y>\sqrt{-4b}$ (i.e., for $t>1$ and hence also $\zeta(t)>0$), provided that $-\frac{8}{3}a\zeta(t)^{3/2}\le K$ for some $K>0$ fixed, since we need only start instead from \eqref{eq:before-simplify-Y} using $Y(a,z)/Y(a,\bar{z})\le \ee^K\ee^{8a\zeta(\bar{t})^{3/2}/3}$ ($\zeta_+(\bar{t})=\zeta(\bar{t})\ge \zeta(t)>0$ in this case), so the implied constant in $O(\epsilon^{1/2})$ is modified by a fixed positive factor $\ee^K$.  On the other hand, if $-\frac{8}{3}a\zeta(t)^{3/2}>K>0$, then introducing $k=\frac{8}{3}\zeta(t)^{3/2}>0$ and corresponding integration variable $\bar{k}=-\frac{8}{3}\zeta(\bar{t})^{3/2}\ge k$, 
\begin{equation}
    \begin{split}
\left|\int_y^{+\infty}Q(\bar{y};\lambda)\widetilde{f}_2(y,\bar{y};\epsilon)\,\dd \bar{y}\right| &\lesssim 2\epsilon^{1/2}\ee^{-8a\zeta(t)^{3/2}/3}\int_{y/\sqrt{-4b}}^{+\infty}\frac{\ee^{8a\zeta(\bar{t})^{3/2}/3}\,\dd \bar{t}}{(1+\bar{t})^{1/3}\zeta(\bar{t})^{1/2}}   \\
&=-\frac{\epsilon^{1/2}}{3^{1/3}}\ee^{-ak}\int_{k}^{+\infty}\frac{\ee^{-a\bar{k}}\,\dd\bar{k}}{\bar{k}^{1/3}(\bar{t}^2-1)^{1/2}(1+\bar{t})^{1/3}},
    \end{split}
\end{equation}
where we used the definition of $\zeta(t)$ in Appendix~\ref{sec:zeta-t-properties} to obtain $\dd \bar{t}/\dd \bar{k}=\frac{1}{4}\zeta(\bar{t})^{-1/2}\zeta'(\bar{t})^{-1}$ and $\zeta(\bar{t})^{1/2}\zeta'(\bar{t})=(\bar{t}^2-1)^{1/2}$, and finally eliminated $\zeta(\bar{t})$ in favor of $\bar{k}$.  Then one combines the asymptotic behavior of $\zeta(\bar{t})$ for $\bar{t}-1$ small and $\bar{t}$ large with the definition of $\bar{k}$ to see that over the whole range of integration, $(\bar{t}^2-1)^{1/2}\gtrsim\bar{k}^{1/3}$ while $(1+\bar{t})^{1/3}\gtrsim \bar{k}^{1/6}$ and therefore
\begin{equation}
    \begin{split}
      \left|\int_y^{+\infty}Q(\bar{y};\lambda)\widetilde{f}_2(y,\bar{y};\epsilon)\,\dd \bar{y}\right| &\lesssim  \epsilon^{1/2}\ee^{-ak}\int_k^{+\infty}\frac{\ee^{a\bar{k}}\,\dd \bar{k}}{\bar{k}^{5/6}}\\
      &\le \frac{\epsilon^{1/2}}{k^{5/6}}\ee^{-ak}\int_k^{+\infty}\ee^{a\bar{k}}\,\dd \bar{k}\\
      &= \frac{\epsilon^{1/2}}{(-a)k^{5/6}}.
    \end{split}
\end{equation}
This estimate holds under the assumption that $-ak>K>0$ or $k^{5/6}>K^{5/6}(-a)^{-5/6}$; hence 
\begin{equation}
    \left|\int_y^{+\infty}Q(\bar{y};\lambda)\widetilde{f}_2(y,\bar{y};\epsilon)\,\dd \bar{y}\right| \lesssim \epsilon^{1/2}(-a)^{-1/6}
\end{equation}  
which is less than or equal to $\epsilon^{1/2}(\frac{1}{2}M^2)^{-1/6}=O(\epsilon^{1/2})$ because $a\le -\frac{1}{2}M^2$.
Combining these results gives a dominant contribution to $|\widetilde{r}_2^+(y)|$ of order $O(\epsilon)$.
Therefore, using \eqref{eq:PC-weight-large-a} for $z=0$ gives $r_1^+(0)=\widetilde{r}_1^+(0)=O(\epsilon\log(\epsilon^{-1}))$ while $r_2^+(0)=Y(a,0)^{-1}\widetilde{r}_2^+(0)=O(\ee^a(-a)^{-a}\epsilon)$.  

\subsection{The solution $\mathbf{u}^-(y)$}
To obtain a nontrivial solution $\mathbf{u}^-(y)$ of \eqref{eq:perturbed-Weber-system} decaying as $y\to -\infty$, we note that the function $\widehat{\mathbf{u}}(y):=\sigma_3\mathbf{u}^+(-y)$ is a solution of the closely-related system
\begin{equation}
    \epsilon\frac{\dd\widehat{\mathbf{u}}}{\dd y}=\begin{bmatrix}0 & 1\\b^2+\frac{1}{4}y^2+\epsilon^2 Q(-y;\lambda) & 0\end{bmatrix}\widehat{\mathbf{u}}
\end{equation}
which differs from \eqref{eq:perturbed-Weber-system} only in the replacement of $Q(y;\lambda)$ by $Q(-y;\lambda)$.  However, as our analysis only used the property $|Q(y;\lambda)|\lesssim 1/(1+y^2)$ which is obviously invariant under $y\mapsto -y$, we can easily adapt the estimates obtained for $\mathbf{u}^+(y)$ to obtain estimates on a solution $\mathbf{u}^-(y)$ decaying as $y\to -\infty$ of exactly the same perturbed system \eqref{eq:perturbed-Weber-system}.  Indeed, writing $\mathbf{u}^-(y)=\sigma_3\mathbf{U}_0(-y;b,\epsilon)\mathbf{v}^-(y)$ with $\mathbf{v}^-(y)=[1,0]^\top + \mathbf{r}^-(y)$ and $r^-_1(y)=\widetilde{r}^-_1(y)$, and either $r^-_2(y)=W(\epsilon^{-1}b,-\epsilon^{-1/2}y)^{-1}\widetilde{r}_2(y)$ or $r^-_2(y)=Y(\epsilon^{-1}b,-\epsilon^{-1/2}y)^{-1}\widetilde{r}_2^-(y)$ depending on whether $a\ge -\frac{1}{2}M^2$ or $a\le -\frac{1}{2}M^2$, we obtain exactly the same estimates:  $\widetilde{r}_1^-(y)=O(\epsilon\log(\epsilon^{-1}))$ and $\widetilde{r}_2^-(y)=O(\epsilon)$ hold uniformly for $a\le 0$ and $y\le 0$.

\subsection{Wronskian computation}
We now compute the Wronskian $\W[\mathbf{u}^+,\mathbf{u}^-](y)$ at $y=0$.  Using $\mathbf{u}^+(0)=\mathbf{U}_0(0;b,\epsilon)\mathbf{v}^+(0)$ and $\mathbf{u}^-(0)=\sigma_3\mathbf{U}_0(0;b,\epsilon)\mathbf{v}^-(0)$, this is given by 
\begin{equation}
\begin{split}
    \W[\mathbf{u}^+,\mathbf{u}^-](0)&=\begin{vmatrix}
        v^+_1(0)U(a,0) + v^+_2(0)V(a,0) & v^-_1(0)U(a,0)+ v^-_2(0)V(a,0)\\v_1^+(0)\epsilon^{1/2}U_z(a,0) + v_2^+(0)\epsilon^{1/2}V_z(a,0) & -v_1^-(0)\epsilon^{1/2}U_z(a,0)-v_2^-(0)\epsilon^{1/2}V_z(a,0)
    \end{vmatrix}
    \\
    &=
-\epsilon^{1/2}v_1^+(0)v_1^-(0)\left[2U(a,0)U_z(a,0) + \frac{v_2^+(0)v_2^-(0)}{v_1^+(0)v_1^-(0)}2V(a,0)V_z(a,0)\right.\\
&\qquad{}\left.+ \left(\frac{v_2^+(0)}{v_1^+(0)}+\frac{v_2^-(0)}{v_1^-(0)}\right)(U(a,0)V_z(a,0)+U_z(a,0)V(a,0))\right]
    \end{split}
\end{equation}
The explicit terms involving parabolic cylinder functions at $z=0$ have known values \cite[12.2 (ii)]{DLMF}; after some simplification, these can be written as 
\begin{equation}
\begin{split}
    2U(a,0)U_z(a,0)&=-\sqrt{\frac{2}{\pi}}\Gamma(\tfrac{1}{2}-a)\cos(-\pi a),\\
    2V(a,0)V_z(a,0)&=\sqrt{\frac{2}{\pi}}\frac{\cos(-\pi a)}{\Gamma(\frac{1}{2}-a)},\\ U(a,0)V_z(a,0)+U_z(a,0)V(a,0)&=\sqrt{\frac{2}{\pi}}\sin(-\pi a).
\end{split}
\end{equation}
For $a<0$, we may therefore write
\begin{equation}
    \sqrt{\frac{\pi}{2}}\frac{\W[\mathbf{u}^+,\mathbf{u}^-](0)}{\epsilon^{1/2}v_1^+(0)v_1^-(0)\Gamma(\frac{1}{2}-a)} = \cos(-\pi a)-\frac{v_2^+(0)v_2^-(0)}{v_1^+(0)v_1^-(0)}\frac{\cos(-\pi a)}{\Gamma(\frac{1}{2}-a)^2}-\left(\frac{v_2^+(0)}{v_1^+(0)}+\frac{v_2^-(0)}{v_1^-(0)}\right)\frac{\sin(-\pi a)}{\Gamma(\frac{1}{2}-a)}.
\end{equation}
Now, over the whole range $a<0$ we have $v_1^\pm(0)=1+r_1^\pm(0)=1+\widetilde{r}_1^\pm(0) = 1+O(\epsilon\log(\epsilon^{-1}))$ as $\epsilon\to 0$; in particular these quantities are nonvanishing for sufficiently small $\epsilon>0$ which justifies division by the product $v_1^+(0)v_1^-(0)$ (and likewise division by $\Gamma(\frac{1}{2}-a)>0$ is justified for $a<0$).  

In the case of low-lying eigenvalues, $-\frac{1}{2}M^2\le a<0$, we use $v_2^\pm(0)=r_2^\pm(0)=W(a,0)^{-1}\widetilde{r}_2^\pm(0)=O(\epsilon)$ and the fact that $\Gamma(\frac{1}{2}-a)\ge\Gamma(\frac{1}{2})>0$ to immediately obtain the result that 
\begin{equation}
        \sqrt{\frac{\pi}{2}}\frac{\W[\mathbf{u}^+,\mathbf{u}^-](0)}{\epsilon^{1/2}v_1^+(0)v_1^-(0)\Gamma(\frac{1}{2}-a)} = \cos(-\pi a) + O(\epsilon).
\label{eq:Wronskian-final}
\end{equation}
In the complementary case that $a\le -\frac{1}{2}M^2<0$, we have instead that $v_2^\pm(0)=r_2^\pm(0)=Y(a,0)^{-1}\widetilde{r}_2^\pm(0) = O(\ee^a(-a)^{-a}\epsilon)$.  But by Stirling's formula, we also have $\Gamma(\frac{1}{2}-a)\gtrsim \ee^a(-a)^{-a}$, so exactly the same result \eqref{eq:Wronskian-final} holds in this case as well.

Since $\lambda$ is an eigenvalue if and only if $\W[\mathbf{u}^+,\mathbf{u}^-](0)=0$, using $a=b/\epsilon$ and substituting for $b=b(\lambda)$ by \eqref{eq:b-of-lambda} completes the proofs of Theorems~\ref{thm:Schroedinger-BS} and \ref{thm:ZS-BS}.

\appendix

\appendixpage

\section{The functions $\zeta(t)$ and $t(\zeta)$}
\label{sec:zeta-t-properties}
Throughout our work, an important role is played by the function $\zeta:(-1,+\infty)\to\mathbb{R}$ defined for $t>1$ by 
\begin{equation}
    \zeta(t):=\left(\frac{3}{2}\int_1^{t}\sqrt{\tau^2-1}\,\dd\tau\right)^{2/3},\quad t>1
    \label{eq:zeta-define}
\end{equation}
and continued analytically to the half-line $t>-1$ as a Schwarz-symmetrical function. An explicit formula for $\zeta$ on the interval $-1<t<1$ is:
\begin{equation}
    \zeta(t)=-\left(\frac{3}{2}\int_t^1\sqrt{1-\tau^2}\,\dd\tau\right)^{2/3},\quad -1<t<1.
\label{eq:zeta-less-than-1}
\end{equation}
We have the asymptotic behavior
\begin{equation}
    \zeta(t) = \left(\frac{3}{4}\right)^{2/3}t^{4/3}(1+O(t^{-1})),\quad t\to+\infty.
    \label{eq:zeta-asymp}
\end{equation}
Note that $\zeta(t)$ vanishes linearly at $t=1$, and it is strictly increasing on $(-1,+\infty)$, so it has an analytic inverse denoted $t:(\zeta(-1),+\infty)\to (-1,+\infty)$.  It is straightforward to show that $t(\diamond)$ and its derivatives have the following asymptotic behavior:
    \begin{equation}
    \begin{gathered}
        t(\zeta)=\left(\frac{4}{3}\right)^{1/2}\zeta^{3/4}(1+O(\zeta^{-1})),\quad
t'(\zeta)=\left(\frac{3}{4}\right)^{1/2}\zeta^{-1/4}(1+O(\zeta^{-1})),\\ t''(\zeta)=-\frac{1}{4}\left(\frac{3}{4}\right)^{1/2}\zeta^{-5/4}(1+O(\zeta^{-1})),\quad t'''(\zeta)=\frac{5}{16}\left(\frac{3}{4}\right)^{1/2}\zeta^{-9/4}(1+O(\zeta^{-1})).
\end{gathered}
\label{eq:t-large-z}
    \end{equation}
From \eqref{eq:zeta-define}--\eqref{eq:zeta-less-than-1} we can easily derive the differential equation
\begin{equation}
    \left(\frac{\dd t}{\dd\zeta}\right)^2(t^2-1)=\zeta.
    \label{eq:t-zeta-ODE}
\end{equation}

\section{Estimates of parabolic cylinder functions}
\label{sec:PC-bounds}
Here we develop some simple estimates for quadratic expressions in the parabolic cylinder functions $U(a,z)$ and $V(a,z)$.  These estimates take different forms depending on whether or not the nonnegative parameter $a\le 0$ is bounded or unbounded.
\subsection{Estimates of $U(a,z)$ and $V(a,z)$ for $a$ bounded}
The parabolic cylinder functions $U(a,z)$ and $V(a,z)$ are jointly entire in $(a,z)\in\mathbb{C}^2$.  Therefore, both functions are bounded on any bidisk $\{|a|<r_a\}\times\{|z|<r_z\}$.  The asymptotic formul\ae\ \eqref{eq:UV-asymp} follow from steepest descent analysis of contour integral representations of $U(a,z)$ and $V(a,z)$, and one can use this analysis to verify that the $O(z^{-2})$ error terms are uniform for bounded $a$.  Therefore if we restrict $a$ to a bounded interval of negative values, say $-\frac{1}{2}M^2\le a\le 0$, and $z$ to nonnegative values $z\ge 0$, then we may bound the real-valued functions $U(a,z)^2$, $V(a,z)^2$, and $U(a,z)V(a,z)$ by a constant on any given finite interval $0\le z\le N^2$, but for unbounded $z>0$ we get bounds instead from the large-$z$ asymptotic \eqref{eq:UV-asymp}.  We therefore define a weight function by setting
\begin{equation}
    W(a,z):=\begin{cases}\left(e^{z^2/4}z^a\right)^2,&\quad z>M\\
    \left(\ee^{M^2/4}M^a\right)^2,&\quad0\le z\le M,
    \end{cases}
\label{eq:PC-weight}
\end{equation}
which has the property that it is positive and nondecreasing as a function of $z\ge 0$.  Then combining a uniform bound for bounded $z$ with the asymptotic behavior \eqref{eq:UV-asymp} proves the following.
\begin{lemma}[Estimates for weighted quadratic expressions in $U(a,z)$ and $V(a,z)$ valid for bounded $a<0$]
Fix $M>0$ and define a weight function $W(a,z)$ by \eqref{eq:PC-weight}.  Then the estimates
\begin{equation}
        U(a,z)^2W(a,z)\lesssim\frac{1}{\sqrt{1+z^2}},\quad 
        V(a,z)^2W(a,z)^{-1}\lesssim\frac{1}{\sqrt{1+z^2}},\quad
        |U(a,z)V(a,z)|\lesssim\frac{1}{\sqrt{1+z^2}}
\end{equation}
hold for $-\frac{1}{2}M^2\le a\le 0$ and $z\ge 0$ (the implied constant depends on $M$).
\label{lem:PC-bound-bounded-a}
\end{lemma}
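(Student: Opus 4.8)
The plan is to split the range $z\ge 0$ at the threshold $z=M$ built into the definition \eqref{eq:PC-weight} of $W(a,z)$ (enlarging it to some $N\ge M$ if convenient), handling a bounded range of $z$ by continuity/compactness and the tail $z\ge N$ by the asymptotics \eqref{eq:UV-asymp}.

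First I would choose $N\ge M$ large enough that the expansions \eqref{eq:UV-asymp} hold for $z\ge N$ with remainder terms that are uniform in $a$ over the bounded interval $[-\tfrac12 M^2,0]$; this uniformity is precisely what the surrounding text records as following from the steepest-descent derivation of \eqref{eq:UV-asymp} (alternatively, cf.\ \cite[Chapter 12]{DLMF}). On the compact set $[-\tfrac12 M^2,0]\times[0,N]$ the three real-valued products $U(a,z)^2$, $V(a,z)^2$, $U(a,z)V(a,z)$ are continuous, hence bounded; the weight $W(a,z)$ is, on each of its two branches, a positive continuous function of $(a,z)$ on a compact set, hence bounded above and below by positive $M$-dependent constants; and $(1+z^2)^{-1/2}$ is bounded above and below by positive constants on $[0,N]$. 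Thus all three inequalities hold for $0\le z\le N$ once the implied constant is taken large enough.

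For the tail $z\ge N$ we have $W(a,z)=\ee^{z^2/2}z^{2a}$, and I would simply substitute \eqref{eq:UV-asymp}. In the product $U(a,z)^2W(a,z)$ the factor $\ee^{-z^2/2}$ from $U(a,z)^2$ cancels $\ee^{z^2/2}$ from $W(a,z)$ and, crucially, the power $z^{-2a}$ from $U(a,z)^2$ cancels $z^{2a}$ from $W(a,z)$, leaving $U(a,z)^2W(a,z)=z^{-1}(1+O(z^{-2}))$; likewise $V(a,z)^2W(a,z)^{-1}=\tfrac{2}{\pi}z^{-1}(1+O(z^{-2}))$; and in $U(a,z)V(a,z)$ the two exponentials and the two powers $z^{\mp a}$ cancel against one another with no weight needed, giving $U(a,z)V(a,z)=\sqrt{2/\pi}\,z^{-1}(1+O(z^{-2}))$. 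All three hold uniformly for $a\in[-\tfrac12 M^2,0]$ and $z\ge N$, and since $z^{-1}\asymp(1+z^2)^{-1/2}$ on that range, each is $\lesssim(1+z^2)^{-1/2}$. Taking the maximum of the implied constants from the two ranges finishes the proof.

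There is no substantive obstacle here; the only points demanding care are (i) the uniformity of the remainders in \eqref{eq:UV-asymp} with respect to the bounded parameter $a$, and (ii) the exact algebraic cancellation of the $z^{\pm 2a}$ factors against the weight --- which is exactly why the $a$-dependent weight $W(a,z)$ was chosen as in \eqref{eq:PC-weight} rather than with a fixed exponent.
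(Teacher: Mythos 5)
Your proposal is correct and follows essentially the same route as the paper: boundedness of the quadratic expressions on a compact $(a,z)$-region (via joint analyticity/continuity of $U$ and $V$), plus the large-$z$ asymptotics \eqref{eq:UV-asymp} with remainders uniform for bounded $a$, with the weight \eqref{eq:PC-weight} chosen exactly so that the factors $\ee^{\pm z^2/2}$ and $z^{\pm 2a}$ cancel and leave a quantity comparable to $(1+z^2)^{-1/2}$. No issues.
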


\subsection{Estimates of $U(a,z)$ and $V(a,z)$ for $a$ large}
Here, we follow \cite[\S 12.10(vii)]{DLMF}, in which for $a<0$, the functions $U(a,z)$ and $V(a,z)$ are written in terms of a rescaled variable $t:=z/\sqrt{-4a}$ exactly in the forms
\begin{equation}
\begin{split}
    U(a,z)&=2\sqrt{\pi}(-2a)^{1/6}G(a)\phi(t)\left[\mathrm{Ai}((-2a)^{2/3}\zeta(t))A^U(t;a) + (-2a)^{-4/3}\mathrm{Ai}'((-2a)^{2/3}\zeta(t))B^U(t;a)\right],\\
    V(a,z)&=2\sqrt{\pi}(-2a)^{1/6}H(a)\phi(t)\left[\mathrm{Bi}((-2a)^{2/3}\zeta(t))A^V(t;a) + (-2a)^{-4/3}\mathrm{Bi}'((-2a)^{2/3}\zeta(t))B^V(t;a)\right],
\end{split}
\label{eq:UV-large-a-exact}
\end{equation}
where $G(a)$ is a well-defined function having asymptotic behavior 
\begin{equation}
    G(a)=\frac{1}{\sqrt{2}}\ee^{a/2}(-a)^{-a/2-1/4}(1+O(a^{-1})),\quad a\to -\infty,
\label{eq:G-expansion}
\end{equation}
$H(a):=G(a)/\Gamma(\frac{1}{2}-a)$ has corresponding behavior
\begin{equation}
    H(a)=\frac{1}{2\sqrt{\pi}}\ee^{-a/2}(-a)^{a/2-1/4}(1+O(a^{-1})),\quad a\to -\infty,
\label{eq:H-expansion}
\end{equation}
$\zeta(t)$ is defined in Appendix~\ref{sec:zeta-t-properties}, and 
\begin{equation}
    \phi(t):=\left(\frac{\zeta(t)}{t^2-1}\right)^{1/4},\quad t>-1,
\end{equation}
where the positive fourth root is meant (of a strictly positive argument).  The coefficients $A^U(t;a)$ and $A^V(t;a)$ are well-defined functions having a common asymptotic expansion:
\begin{equation}
    A^{U,V}(t;a)\sim\sum_{s=0}^\infty A_s(t)(-2a)^{-2s},\quad a\to -\infty,
\label{eq:A-series}
\end{equation}
and likewise the coefficients $B^U(t;a)$ and $B^V(t;a)$ are well-defined and have the same asymptotic expansion:
\begin{equation}
    B^{U,V}(t;a)\sim\sum_{s=0}^\infty B_s(t)(-2a)^{-2s},\quad a\to -\infty,
\label{eq:B-series}
\end{equation}
with all four expansions holding uniformly for $t\ge -1+\delta$ for fixed small $\delta>0$.  The expansion coefficients have the forms
\begin{equation}
\begin{split}   A_s(t)&:=\zeta(t)^{-3s}\sum_{m=0}^{2s}\beta_m\phi(t)^{6(2s-m)}u_{2s-m}(t)\\
B_s(t)&=-\zeta(t)^{-3s-2}\sum_{m=0}^{2s+1}\alpha_m\phi(t)^{6(2s-m+1)}u_{2s-m+1}(t)
\end{split}
\end{equation}
in which $\alpha_m$ and $\beta_m$ are numerical coefficients defined in \cite[12.10.43]{DLMF} the specific values of which we will not need, and $u_k(t)$ is a well-defined polynomial in $t$ with $u_0(t)=1$ and otherwise $u_k(t)$ has degree $3k$ for $k$ odd and $3k-2$ for $k\ge 2$ even.

Since $\zeta(t)$ and $\phi(t)$ are analytic functions for $t>-1$, they are clearly bounded uniformly on compact subsets, and it is easy to see from \eqref{eq:zeta-asymp} that $\phi(t)=(\frac{3}{4})^{1/6}t^{-1/6}(1+O(t^{-1}))$ as $t\to+\infty$.  It is then straightforward to check that for each $s=0,1,2,\dots$, there is a constant $C_s>0$ such that 
\begin{equation}
  |A_s(t)|\le C_s\quad\text{and}\quad|B_s(t)|\le   \frac{C_s}{(1+|t|)^{2/3}},\quad t\ge -1+\delta.
\end{equation}
Hence \eqref{eq:A-series} implies that for $M>0$ sufficiently large,
\begin{equation}
    |A^U(t;a)|\le 2C_0\quad\text{and}\quad |A^V(t;a)|\le 2C_0.
\end{equation}
both hold if $a\le -\frac{1}{2}M^2$ and $t\ge -1+\delta$.
Similarly, \eqref{eq:B-series} implies that for $M>0$ sufficiently large,
\begin{equation}
    |B^U(t;a)|\le \frac{2C_0}{(1+|t|)^{2/3}}\quad\text{and}\quad |B^V(t;a)|\le\frac{2C_0}{(1+|t|)^{2/3}}
\end{equation}
both hold under the same conditions.  Since also $|\phi(t)|\lesssim(1+|t|)^{-1/6}$ for $t\ge -1+\delta$, while \eqref{eq:G-expansion} and \eqref{eq:H-expansion} imply respectively that if $M>0$ is sufficiently large,
\begin{equation}
    \left|(-2a)^{1/6}G(a)\right|\le  (-2a)^{1/6}\cdot 2\frac{1}{\sqrt{2}}\ee^{a/2}(-a)^{-a/2-1/4}=2^{2/3}\ee^{a/2}(-a)^{-a/2-1/12}
\end{equation}
and
\begin{equation}
    \left|(-2a)^{1/6}H(a)\right|\le (-2a)^{1/6}\cdot 2\frac{1}{2\sqrt{\pi}}\ee^{-a/2}(-a)^{a/2-1/4} = \frac{2^{1/6}}{\sqrt{\pi}}\ee^{-a/2}(-a)^{a/2-1/12}
\end{equation}
both hold for $a\le -\frac{1}{2}M^2$, from \eqref{eq:UV-large-a-exact} we obtain
\begin{equation}
\begin{split}
    |U(a,z)|&\lesssim \frac{\ee^{a/2}(-a)^{-a/2-1/12}}{(1+|t|)^{1/6}}\left[\left|\mathrm{Ai}((-2a)^{2/3}\zeta(t))\right| + \frac{(-2a)^{-4/3}}{(1+|t|)^{2/3}}\left|\mathrm{Ai}'((-2a)^{2/3}\zeta(t))\right|\right]\\
    |V(a,z)|&\lesssim \frac{\ee^{-a/2}(-a)^{a/2-1/12}}{(1+|t|)^{1/6}}\left[\left|\mathrm{Bi}((-2a)^{2/3}\zeta(t))\right| + \frac{(-2a)^{-4/3}}{(1+|t|)^{2/3}}\left|\mathrm{Bi}'((-2a)^{2/3}\zeta(t))\right|\right]
\end{split}
\end{equation}
valid for $a\le -\frac{1}{2}M^2$ and $t\ge -1+\delta$.  

Finally, we can apply the estimates
\begin{equation}
    |\mathrm{Ai}(w)|\lesssim\frac{\ee^{-2w^{3/2}_+/3}}{|w|^{1/4}},\quad |\mathrm{Ai}'(w)|\lesssim |w|^{1/4}\ee^{-2w_+^{3/2}/3},\quad|\mathrm{Bi}(w)|\lesssim\frac{\ee^{2w_+^{3/2}/3}}{|w|^{1/4}},\quad|\mathrm{Bi}'(w)|\lesssim|w|^{1/4}\ee^{2w_+^{3/2}/3}
\end{equation}
that hold for all $w\in\mathbb{R}$, where $w_+$ denotes the positive part of $w$, i.e., $w_+=w$ for $w>0$ and $w_+=0$ otherwise.  Thus,
\begin{equation}
    \begin{split}
        |U(a,z)|&\lesssim\frac{\ee^{a/2}(-a)^{-a/2-1/12}}{(1+|t|)^{1/6}}\frac{\ee^{4a\zeta_+(t)^{3/2}/3}}{(-a)^{1/6}|\zeta(t)|^{1/4}}\left[1+(-2a)^{-1}\frac{|\zeta(t)|^{1/2}}{(1+|t|)^{2/3}}\right],\\
        |V(a,z)|&\lesssim\frac{\ee^{-a/2}(-a)^{a/2-1/12}}{(1+|t|)^{1/6}}\frac{\ee^{-4a\zeta_+(t)^{3/2}/3}}{(-a)^{1/6}|\zeta(t)|^{1/4}}\left[1+(-2a)^{-1}\frac{|\zeta(t)|^{1/2}}{(1+|t|)^{2/3}}\right].
    \end{split}
\end{equation}
Since $|\zeta(t)|^{1/2}/(1+|t|)^{2/3}$ is uniformly bounded for $t\ge -1+\delta$ and $a\le -\frac{1}{2}M^2$, we obtain simply
\begin{equation}
        |U(a,z)|\lesssim\frac{\ee^{a/2}(-a)^{-a/2-1/4}\ee^{4a\zeta_+(t)^{3/2}/3}}{(1+|t|)^{1/6}|\zeta(t)|^{1/4}}\quad\text{and}\quad
|V(a,z)|\lesssim\frac{\ee^{-a/2}(-a)^{a/2-1/4}\ee^{-4a\zeta_+(t)^{3/2}/3}}{(1+|t|)^{1/6}|\zeta(t)|^{1/4}}
\end{equation}
both valid for $a\le -\frac{1}{2}M^2$ and $t\ge -1+\delta$.  These imply the following.
\begin{lemma}[Estimates for weighted quadratic expressions in $U(a,z)$ and $V(a,z)$ valid for large $a<0$]
Writing $t:=z/\sqrt{-4a}$ for $a<0$, let a weight function be defined for $t>-1$ by
\begin{equation}
    Y(a,z):=\ee^{-a}(-a)^{a}\ee^{-8a\zeta_+(t)^{3/2}/3}=\begin{cases}\ee^{-a}(-a)^a,& t\le 1,\\
    \ee^{-a}(-a)^a\ee^{-8a\zeta(t)^{3/2}/3},&t>1,
    \end{cases}
    \label{eq:PC-weight-large-a}
\end{equation}
where $\zeta(t)$ is an analytic function of $t>-1$ defined by \eqref{eq:zeta-define}, strictly increasing, with $\zeta(1)=0$ and $\zeta(t)=(\frac{3}{4})^{2/3}t^{4/3}(1+O(t^{-1}))$ as $t\to+\infty$.  Then for $M>0$ sufficiently large and $a\le -\frac{1}{2}M^2$ and $t\ge -1+\delta$, the following estimates hold:
\begin{equation}
\begin{split}
    U(a,z)^2Y(a,z)&\lesssim \frac{1}{(-a)^{1/2}(1+|t|)^{1/3}|\zeta(t)|^{1/2}},\\ V(a,z)^2Y(a,z)^{-1}&\lesssim \frac{1}{(-a)^{1/2}(1+|t|)^{1/3}|\zeta(t)|^{1/2}},\\|U(a,z)V(a,z)|&\lesssim \frac{1}{(-a)^{1/2}(1+|t|)^{1/3}|\zeta(t)|^{1/2}}.
\end{split}
\end{equation}
\label{lem:PC-bound-large-a}
\end{lemma}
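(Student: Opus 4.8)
\section*{Proof proposal for Lemma~\ref{lem:PC-bound-large-a}}

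The plan is to read off all three estimates from the pointwise bounds on $|U(a,z)|$ and $|V(a,z)|$ assembled in the discussion immediately preceding the statement: for $M>0$ sufficiently large, $a\le -\frac{1}{2}M^2$, and $t=z/\sqrt{-4a}\ge -1+\delta$,
\begin{equation*}
    |U(a,z)|\lesssim\frac{\ee^{a/2}(-a)^{-a/2-1/4}\ee^{4a\zeta_+(t)^{3/2}/3}}{(1+|t|)^{1/6}|\zeta(t)|^{1/4}},\qquad
    |V(a,z)|\lesssim\frac{\ee^{-a/2}(-a)^{a/2-1/4}\ee^{-4a\zeta_+(t)^{3/2}/3}}{(1+|t|)^{1/6}|\zeta(t)|^{1/4}}.
\end{equation*}
Since the weight in \eqref{eq:PC-weight-large-a} is $Y(a,z)=\ee^{-a}(-a)^{a}\ee^{-8a\zeta_+(t)^{3/2}/3}$, squaring the bound for $|U(a,z)|$ and multiplying by $Y(a,z)$ cancels the factors $\ee^{a}$, $(-a)^{-a}$, $\ee^{8a\zeta_+(t)^{3/2}/3}$ exactly against $\ee^{-a}$, $(-a)^{a}$, $\ee^{-8a\zeta_+(t)^{3/2}/3}$, leaving the residual algebraic factor $(-a)^{-1/2}(1+|t|)^{-1/3}|\zeta(t)|^{-1/2}$; squaring the bound for $|V(a,z)|$ and multiplying by $Y(a,z)^{-1}$ produces the same cancellation and the same residual factor. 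For the cross term one multiplies the two bounds directly: $\ee^{a/2}\cdot\ee^{-a/2}=1$, $(-a)^{-a/2-1/4}(-a)^{a/2-1/4}=(-a)^{-1/2}$, $\ee^{4a\zeta_+(t)^{3/2}/3}\cdot\ee^{-4a\zeta_+(t)^{3/2}/3}=1$, and $(1+|t|)^{-1/6}|\zeta(t)|^{-1/4}$ appears squared, again yielding $(-a)^{-1/2}(1+|t|)^{-1/3}|\zeta(t)|^{-1/2}$. This is exactly the asserted bound in all three cases.

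Thus, once the two displayed pointwise bounds are in hand, the lemma is pure bookkeeping of powers of $\ee$, of $-a$, of $1+|t|$, and of $|\zeta(t)|$. The content therefore lies in those pointwise bounds, which rest on three ingredients already laid out in the excerpt: (i) the exact Liouville--Green representations \eqref{eq:UV-large-a-exact} of $U(a,z)$ and $V(a,z)$ in terms of Airy functions evaluated at $(-2a)^{2/3}\zeta(t)$; (ii) uniform control of the slowly-varying factors, namely $|A^{U,V}(t;a)|,|B^{U,V}(t;a)(1+|t|)^{2/3}|\lesssim 1$ for $a\le-\frac12 M^2$ obtained by truncating the asymptotic expansions \eqref{eq:A-series}--\eqref{eq:B-series}, together with $\phi(t)\lesssim(1+|t|)^{-1/6}$ and the Stirling-type bounds on $(-2a)^{1/6}G(a)$ and $(-2a)^{1/6}H(a)$ following from \eqref{eq:G-expansion}--\eqref{eq:H-expansion}; and (iii) the elementary global bounds $|\mathrm{Ai}(w)|\lesssim|w|^{-1/4}\ee^{-2w_+^{3/2}/3}$, $|\mathrm{Ai}'(w)|\lesssim|w|^{1/4}\ee^{-2w_+^{3/2}/3}$, and the matching bounds for $\mathrm{Bi},\mathrm{Bi}'$ with $\ee^{+2w_+^{3/2}/3}$, valid for all real $w$.

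The one step requiring a little care --- and the nearest thing to an obstacle --- is reconciling the $\mathrm{Ai}'$-type term in \eqref{eq:UV-large-a-exact}, which, after inserting $w=(-2a)^{2/3}\zeta(t)$ and the bounds in (iii), carries an extra factor of the form $(-2a)^{-1}|\zeta(t)|^{1/2}(1+|t|)^{-2/3}$ compared with the leading $\mathrm{Ai}$-term. This must be absorbed into the implied constant, which is legitimate because $|\zeta(t)|^{1/2}(1+|t|)^{-2/3}$ is bounded uniformly for $t\ge -1+\delta$: it is an analytic function of $t$ near $t=1$ (where $\zeta$ vanishes linearly), and by \eqref{eq:zeta-asymp} it tends to the constant $(\frac{3}{4})^{1/3}$ as $t\to+\infty$. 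Once this factor is absorbed (and $(-2a)^{-1}\le 1$ is used), the two displayed pointwise bounds follow, and the three weighted quadratic estimates of the lemma drop out by the cancellation described above, with all implied constants depending only on $M$ and $\delta$.
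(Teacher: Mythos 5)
Your proposal is correct and follows essentially the same route as the paper: the paper's own derivation consists precisely of the pointwise bounds on $|U(a,z)|$ and $|V(a,z)|$ obtained from the exact Airy-type representations \eqref{eq:UV-large-a-exact}, the uniform bounds on $A^{U,V}$, $B^{U,V}$, $\phi$, $G$, $H$, and the global Airy estimates, with the factor $(-2a)^{-1}|\zeta(t)|^{1/2}(1+|t|)^{-2/3}$ absorbed into the implied constant exactly as you describe, after which the three weighted quadratic estimates follow by the same cancellation bookkeeping you carried out.
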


\end{document}